\numberwithin{equation}{section}
\newtheorem{theorem}{Theorem}[section]
\newtheorem{lemma}[theorem]{Lemma}
\newtheorem{corollary}[theorem]{Corollary}
\newtheorem*{theorem*}{Theorem}
\newtheorem{example}[theorem]{Example}
\newtheorem{remark}[theorem]{Remark}
\def\N{\mathbb{N}}
\def\P{\mathbf{P}}
\def\E{\mathbf{E}}
\def\H{\mathbf{H}}
\def\K{\mathbf{K}}
\def\HN{{\mathbf{H}}^{(N)}}
\def\vg{\boldsymbol{\vec{\gamma}}}
\begin{document}

\title{A limit theorem for selectors}
 \author[F. Durango, J.\,L. Fern\'{a}ndez, P. Fern\'{a}ndez and  M.\,J. Gonz\'{a}lez]{Francisco Durango, Jos\'{e} L. Fern\'{a}ndez, Pablo Fern\'{a}ndez and  Mar\'{\i}a J. Gonz\'{a}lez}

 \date{\today}

\keywords{Limit theorem for statistics, order and extreme statistics, Boolean cube, monotone Boolean function, Zermelo's algorithm, edge isoperimetric inequality.}

 \maketitle

 \begin{abstract}
 {Any (measurable) function $K$ from $\mathbb{R}^n$ to $\mathbb{R}$ defines an operator $\mathbf{K}$ acting on random variables $X$ by $\mathbf{K}(X)=K(X_1, \ldots, X_n)$, where the~$X_j$ are independent copies of~$X$. The main result of this paper concerns selectors $H$, continuous functions defined in~$\mathbb{R}^n$ and such that $H(x_1, x_2, \ldots, x_n) \in \{x_1,x_2, \ldots, x_n\}$. For each such selector~$H$ (except for projections onto a single coordinate) there is a unique point $\omega_H$ in the interval $(0,1)$  so that for any random variable $X$ the iterates $\mathbf{H}^{(N)}$ acting on $X$ converge in distribution as $N \to \infty$ to the $\omega_H$-quantile of $X$. 
 }
 \end{abstract}

\section{Introduction}

Any (Borel measurable) function $K$ from $\mathbb{R}^n$ to $\mathbb{R}$ defines an operator $\mathbf{K}$ acting on random variables $X$ by $\mathbf{K}(X)=K(X_1, \ldots, X_n)$, where the $X_j$ are independent copies of $X$. The~\mbox{$N$-th} iterate of the operator $\K$ is denoted by $\K^{(N)}$.
We investigate in this paper the convergence (in distribution) of the iterates $\K^{(N)}$ of a special kind of functions $K$ which we call selectors.

A \textbf{selector} is a \textit{continuous} function $H\colon \mathbb{R}^n\to\mathbb{R}$ which satisfies the \textit{selecting} property:
$$
 H(x_1, x_2, \ldots, x_n) \in \{x_1,x_2, \ldots, x_n\}\,, \quad \mbox{for any $(x_1, x_2, \ldots, x_n)\in \mathbb{R}^n$}\,.
$$
(Occasionally, in what follows, we shall consider also measurable --but not necessarily conti\-nuous-- functions $H$ satisfying the selecting property.) Maximum, minimum, and, in general, order statistics are conspicuous examples of selectors.

We will prove that selectors admit very concrete expressions combining max and min operators, and Sperner families: in fact, selectors are \textit{Sperner statistics}; see the definition in Section \ref{section:sperner}, and then Theorem~\ref{theor:continuous conservative equal selectors equal sperner}.

\smallskip

Theorem \ref{main th Sperner} claims that to each selector $H$ (except for projections onto a single coordinate) we may ascribe a unique point $\omega_H$ in the interval $[0,1]$  so that for \textit{any} random variable~$X$ the iterates~$\mathbf{H}^{(N)}$ acting on~$X$ converge in distribution to the quantile of $X$ corresponding to the point $\omega_H$. This $\omega_H$ is the unique fixed point of a certain polynomial~$h$ canonically associated to $H$.


\smallskip
This limit result parallels the Weak Law of Large Numbers,  which  corresponds to the function $K(x_1, \ldots, x_n)=\frac{1}{n}\sum_{i=1}^n x_i$, and the Central Limit Theorem, which corresponds  to  the function $K(x_1, \ldots, x_n)=\frac{1}{\sqrt{n}}\sum_{i=1}^n x_i$ (for typified random variable $X$). See Section \ref{section:LGN}.

Theorem \ref{main th Sperner} is an outgrow of a convergence result for  the Zermelo value of binary games when the outcomes of the game are randomized and the length of the game tends to infinity. We discuss this illustration in  Section \ref{randomizingzermelo}, as a motivating starting point.

\medskip
The paper is organized as follows. We introduce
in Section \ref{section:conservative} the basic notions that will be used in the paper: conservative and Sperner statistics, and the associated modules. Section~\ref{section:selectors} shows the equivalence between selectors and  Sperner statistics. We introduce in Section \ref{sec:sperner polynomials} the so called Sperner polynomials, that will be essential
in the analysis (see Section~\ref{sec:fixed points of selectors}) of the fixed points of modules of selectors.
The convergence result for the iteration of selectors is proved in Section~\ref{section:limit theorem}.
Finally, Section \ref{section:LGN} discusses some analogies with laws of large numbers.

\subsection{Notation and preliminaries}\label{notation}


\subsubsection*{Quantiles.} We denote the distribution function of a random variable $X$ by $F_X$. 
We define the \textit{quantile function} $Q_X$ of $X$ as follows. $Q_X$ is defined in $[0,1]$. Write, for $\eta \in (0,1)$,
$$\begin{aligned}
a_X(\eta)&=\sup{\{t \in \mathbb{R}: F_X(t) <\eta\}}=\inf{\{t \in \mathbb{R}: F_X(t) \ge\eta\}}\, ,\\
z_X(\eta)&=\inf{\{t \in \mathbb{R}: F_X(t) >\eta\}}=\sup{\{t \in \mathbb{R}: F_X(t) \le\eta\}}\, .
\end{aligned}$$
Observe that $a_X(\eta) \le z_X(\eta)$ and that $\eta \le F_X\big(a_X(\eta)\big)\le F_X\big(z_X(\eta)\big)$. 
Notice also that
$a_X(\eta) < z_X(\eta)$ means that $\P\big(X\le a_X(\eta)\big)=\eta$, and that $\P\big(a_X(\eta) < X < z_X(\eta)\big)=0$.

\smallskip

For $\eta \in (0,1)$, we define the $\eta$-quantile $Q_X(\eta)$ of $X$
as the \textit{random variable} which takes the value $a_X(\eta)$ with probability $\eta$, and the value $z_X(\eta)$ with probability $1-\eta$.
Observe that,  if $a_X(\eta) < z_X(\eta)$, then  $Q_X$ takes two values, and that $Q_X$ is a constant if $a_X(\eta) = z_X(\eta)$.

We set also
$$
Q_X(0)=\inf{\{t\in \mathbb{R}: F_X(t)>0\}}\, ,\quad\text{and}\quad
Q_X(1)=\sup{\{t\in \mathbb{R}: F_X(t)<1\}}
$$
(the essential infimum of $X$,  and the essential supremum of $X$, respectively). Notice that $\P(Q_X(0)\le X \le Q_X(1))=1$.

\begin{itemize}
\item For a continuous random variable with continuous and \textit{strictly increasing} distribution function, $Q_X(\eta)$ is a constant for each $\eta \in (0,1)$, $Q_X(0)=-\infty$, $Q_X(1)=+\infty$, and $Q_X$ (restricted to $(0,1)$) is the inverse for $F_X$.
\item For a finite random variable $X$, the quantile $Q_X(\eta)$ is a constant, unless $\eta$ is one of the values attained by $F_X$. For instance, if $X$ takes just two values $a <b$ with respective probabilities $p \in(0,1)$ and $1-p$, then
    $$
    Q_X(\eta)\stackrel{\rm d}{=}\begin{cases}
    a, & \  \text{if} \ 0\le \eta <p\, , \\
    X, &  \  \text{if} \ \eta =p\, ,\\
        b, & \  \text{if} \ p< \eta \le 1\, . \\
    \end{cases}
    $$
 \end{itemize}

See \cite{EH} for a detailed description of quantiles, where $Q_X(\eta)$ is defined always as $a_X(\eta)$.


%

\subsubsection*{Bernstein polynomials.} For each integer $n \ge 1$, the \textit{Bernstein polynomials}, given by
$$
B^{(n)}_j(t)=\binom{n}{j} \,t^j (1-t)^{n-j},\quad\text{for $j=0, 1, \ldots, n$,}
$$
form a basis of the space of polynomials of degree at most $n$. Recall that
\begin{align}
\label{Bernstein positive}\text{(positivity)}\quad&B^{(n)}_j(t)>0 \quad\text{for $t\in (0,1)$;}
\\
\label{Bernstein partition}\text{(partition of unity)}\quad&\sum_{j=0}^n B^{(n)}_j(t)=1.
\\
\label{equation:derivativeBernstein}\text{(derivatives)}\quad &
\frac{d}{dx}B^{(n)}_j(x)=n \big[B^{(n-1)}_{j-1}(x)-B^{(n-1)}_{j}(x)\big]\quad\text{for $0 \le j \le n$.}
\end{align}
(We are using here the convention that $B^{(n-1)}_{n}\equiv 0$ and that $B^{(n-1)}_{-1}\equiv 0$.)

\subsubsection*{Subsets and the Boolean cube.} Let $\mathbb{B}^n=\{0,1\}^n$ be the Boolean cube. We shall use the standard identification between $\mathbb{B}^n$ and $\mathcal{P}(n)$ (the subsets of $\{1,\dots,n\}$).

For each $p\in(0,1)$, the Bernoulli measure $\mu_p$ in $\mathbb{B}^n$ is given by $\mu_p(\{(x_1,\dots, x_n)\})=p^{\#\{x_j=1\}}\, (1-p)^{\#\{x_j=0\}}$ for any $(x_1,\dots, x_n)\in \mathbb{B}^n$; so the coordinates are independent Bernoulli random variables with success probability $p$.

A family $\mathcal{D}$ of subsets of $\{1,\dots,n\}$ is a \textit{downset} if the following property holds: if $A\in \mathcal{D}$ and $B\subseteq A$, then $B\in\mathcal{D}$. A collection $\mathcal{U}$ is an \textit{upset} if $A\in \mathcal{U}$ and $A\subseteq B$ implies $B\in\mathcal{U}$.

A \textit{Sperner family} in $\{1, 2, \ldots n\}$ is a collection $\{A_1, \ldots, A_k\}$ of nonempty subsets of  $\{1, 2, \ldots n\}$ such that no $A_i$ of the family is contained in any other  $A_j$ of the family. For instance, the family of all subsets of size $r$ ($1 \le r \le n$) is a Sperner family.

A family of nonempty \textit{pairwise} disjoint subsets of $\{1, 2, \ldots n\}$ will be called a \textit{disjoint family}; such a disjoint family is obviously a Sperner family.




\section{Randomizing Zermelo's (value of game) algorithm}\label{randomizingzermelo}

This research originated with the analysis of a randomized version of Zermelo's algorithm, which we describe now.

Two players $\alpha$ and $\beta$ alternately add symbols $L$ or $R$  to form a string; symbols are always added to the right of the existing string. The string is empty at the outset; the game ends when the length of the string is $2N$, for some predetermined integer $N \ge 1$. The collection of strings of length $2N$ with the symbols $L$ and $R$ is partitioned into two subsets, $A$ and~$B$. This partition is  known before the game starts.

Let us say that $\alpha$ starts and, so, that  $\beta$ places the final symbol of the string. Player $\alpha$ aims to have the final string in $A$, while player $\beta$ aims for $B$. Zermelo's theorem dictates that either player $\alpha$ has a winning strategy or player $\beta$ has a winning strategy. We refer to~\cite{binmore} for background on Zermelo's theorem and algorithm.

Represent all the posible plays in a binary rooted tree with $2N$ generations (plus the root, the $0$th generation). Each branch of the tree is indexed in an obvious way by a complete string of $L$ (left) and $R$ (right). Label the leaves of the tree corresponding to $A$ with  1 and those corresponding to $B$ with 0. Fill in all the internal nodes (including the root) of the tree backwardly with values 0 and 1  as follows: in the odd numbered generations, place the minimum of the value of the two descendants nodes; and in the even numbered generations, the maximum.

\begin{figure}[h]
\resizebox{13.3cm}{!}{\includegraphics{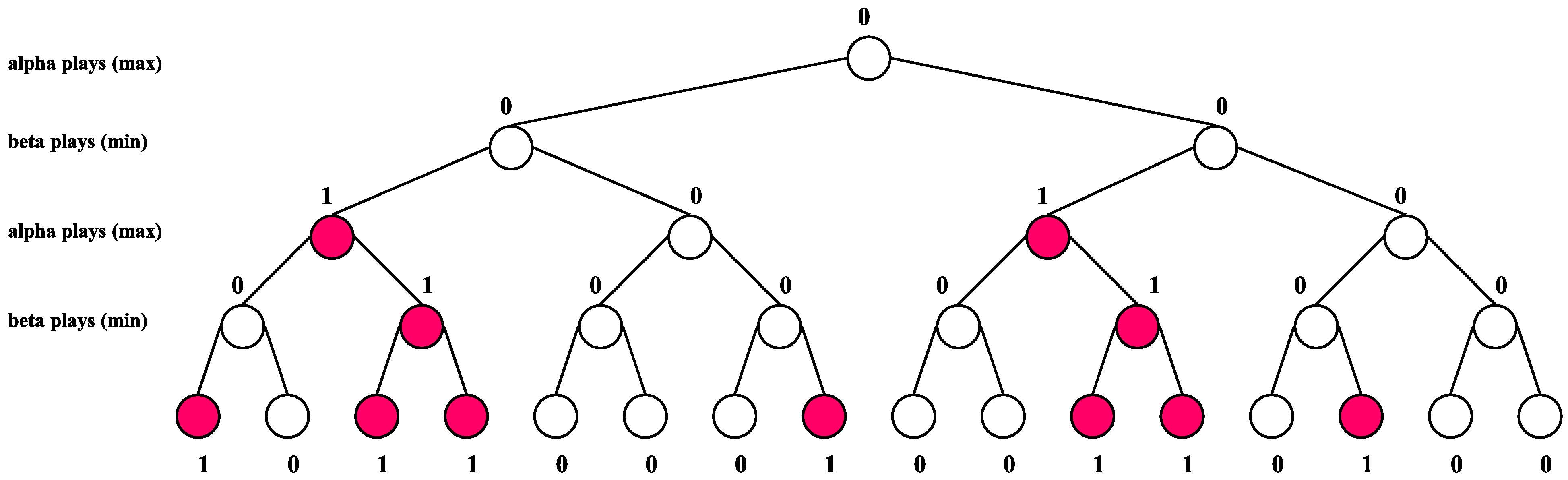}}
\caption{Zermelo's algorithm for $N=2$.}\label{fig:zermelo}
\end{figure}

The value that finally appears at the root is the value $V_N$ of the game from the point of view of $\alpha$, in the sense that  if $V_N=1$, $\alpha$ has a winning strategy, and if $V_N=0$ is $\beta$ who has a winning strategy. Figure \ref{fig:zermelo} would correspond to a winning game for $\beta$.

For each choice of the partition, $A$ and $B$, of the set of the $2^{2N}$ leaves, we obtain in this fashion a well determined value $V_N=V_N(A,B)$.

\smallskip

If we now \textit{randomize} the choice of the partition, $V_N$ becomes  a Bernoulli variable. Fix a probability $p \in (0,1)$ and toss $2^{2N}$ independent coins with success probability $1-p$ to decide for each leaf of the tree whether it is to be included in $A$ (the value 1) or in $B$ (the value 0).

The values of the nodes of the $2N-1$ generation are independent Bernoulli variables with success probability $(1-p)^2$, while the values on the preceding  generation (the $2N-2$ generation) are again independent Bernoulli variables, but now with probability of success~$1-(1-(1-p)^2)^2$. Set
$$
h(p)=(1-(1-p)^2)^2.
$$
 Iterating, we deduce that
$$
\P(V_N=0)=h^{(N)}(p),
$$
where the superscript $N$ on $h$ means that $h$ is composed with itself $N$ times.

The polynomial $h$ increases from $h(0)=0$ to $h(1)=1$, and has a unique fixed point in~$(0,1)$, namely $p^*=1- 1/\varphi=(3-\sqrt{5})/2\approx 0\mbox{.}382$, where $\varphi={(1+\sqrt{5})}/{2}$ denotes the golden section.
\begin{figure}[h]

\centering\resizebox{5cm}{!}{\includegraphics{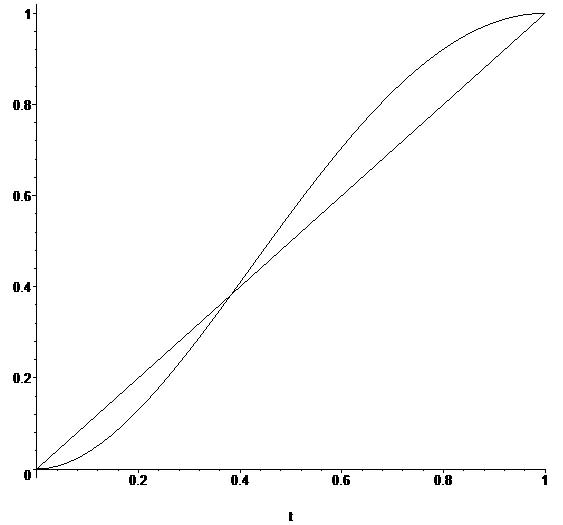}}\qquad \qquad\resizebox{5cm}{!}{\includegraphics{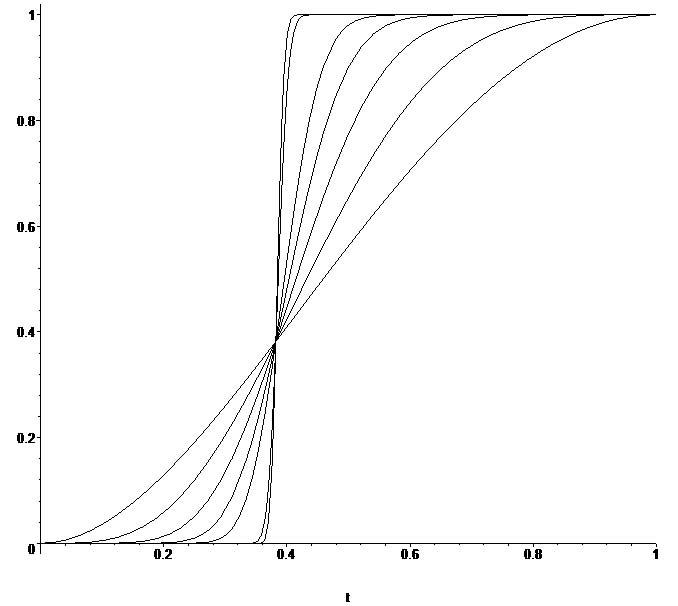}}
\caption{The graphs of the polynomial $h$ of the Zermelo game and of its iterates.}\label{fig:hZermelo}
\end{figure}

As~$N\to\infty$, the iterates $h^{(N)}(p)$ tend to 1 if $p>p^*$; to 0 if $p<p^*$; and to $p^*$ if $p=p^*$.

Thus the random variable $V_N$ tends in distribution to the constant 0 if $p>p^*$; to the constant 1 if $p<p^*$; and to a Bernoulli variable with probability  of success $1-p^*$, if $p=p^*$.

In other terms, for large $N$,
\begin{itemize}\itemsep=0pt
\item player $\beta$ is almost certain to win if $p>p^*$,
\item player $\alpha$ is almost certain to win if $p<p^*$.
\end{itemize}

In terms of quantiles (see Section \ref{notation}),
$$
V_N\xrightarrow{\rm d} Q_X(p^*)\quad\text{as $N\to\infty$},
$$
where $X$ is a Bernoulli variable with probability of success $1-p^*$. 

If, instead,  player $\beta$ starts  and player $\alpha$ plays the last move, then the critical value $p^*$ is $1/\varphi\approx 0\mbox{.}618$, meaning that $\beta$ is almost certain to win only if $p > 1/\varphi$.

\smallskip

If, further, who plays first is decided by means of a symmetric coin toss, the whole affair becomes equalized and, for large $N$,
\begin{itemize}\itemsep=0pt
\item if $p <1-1/\varphi$, $\alpha$ is almost  certain to win;
\item if $p > 1/\varphi$, $\beta$ is almost certain to win;
\item while, if $1-1/\varphi <p < 1/\varphi$, each player has an equal chance of having a winning strategy.
\end{itemize}

\section{Conservative statistics}\label{section:conservative}

A  (Borel) measurable function $H: \mathbb{R}^n \rightarrow \mathbb{R}$ is said to be a \textbf{conservative statistic} if there exists a function $h\colon [0,1] \to [0,1]$ such that for \textit{any} random variable $X$ the distribution function of $\H(X)$  is given by
$$
F_{\H(X)}=h(F_X)\, ;
$$
that is, if $X_1,\dots, X_n$ are independent copies of $X$,
$$
\P(H(X_1,\dots, X_n)\le t)=h\big(\P(X\le t)\big)\, , \quad \mbox{for each $t \in \mathbb{R}$}\, .
$$
The function $h$ is called the \textit{module} of $H$ and $n$ is termed the \textit{dimension} of $H$. As we will see in a moment, each such module $h$  is a nondecreasing function which satisfies $h(0)=0$ and $h(1)=1$.

Observe that the module $h$ is completely determined by the statistic $H$, as if $U$ is a random variable uniformly distributed in $[0,1]$, then $h$ is the (restriction to $[0,1]$ of the) distribution function of $\H(U)$,
$$
h(t)=\P(\H(U)\le t) \quad\text{for each $t\in[0,1]$.}
$$
In particular, this yields that $h$ is nondecreasing. 

\smallskip

The standard projections $H_j(x_1, x_2, \ldots, x_n)=x_j$ for $1 \le j \le n$ are all (trivially) conservative statistics, each of them with the identity as module.

For $n=4$,  the function $M\!m$
\begin{equation}
\label{equation:maxminstatistic}
M\!m(x_1, x_2, x_3, x_4)=\max\big(\min(x_1, x_2), \min(x_3, x_4)\big)
\end{equation}
is a conservative statistic. Observe that
\begin{align*}
&\P\big(\max(\min(X_1,X_2), \min(X_3, X_4))\le t\big)
=\big[1-\big(1-\P(X\le t)\big)^2\big]^2\,,
\end{align*}
so the module of $Mm$ is the polynomial
\begin{equation}\label{module Zermelo game}
h(t)=(1-(1-t)^2)^2.
\end{equation}
This is the statistic pertaining to the randomization of Zermelo's algorithm of Section \ref{randomizingzermelo}.

\subsection{Modules are polynomials}

For the proof of the next lemma,  we shall use Lemma~\ref{lemma:conservative_implies_selector} in Section \ref{section:selectors}, which says that any conservative statistic satisfies the selecting property.

\begin{lemma}
{\textit{The module $h$ of a conservative statistic $H$  of dimension $n$ is  the restriction to $[0,1]$ of a polynomial of degree at most $n$.}}
\end{lemma}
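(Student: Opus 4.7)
The plan is to evaluate the module $h$ at an arbitrary point $p \in [0,1]$ by feeding $\mathbf{H}$ a carefully chosen two-valued random variable, and then to read off the polynomial structure directly from the counting formula for Bernoulli probabilities. Specifically, for each $p \in [0,1]$, I would take $X$ to be the random variable with $\mathbb{P}(X=0)=p$ and $\mathbb{P}(X=1)=1-p$; then $F_X(0)=p$, so by the defining property of the module, $h(p) = F_{\mathbf{H}(X)}(0) = \mathbb{P}(H(X_1,\dots,X_n)\le 0)$, where the $X_j$ are independent copies of $X$.

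Here is where the forthcoming Lemma~\ref{lemma:conservative_implies_selector} enters. Since $H$ is conservative, it satisfies the selecting property, and so on Boolean inputs $H(x_1,\dots,x_n)\in\{x_1,\dots,x_n\}\subseteq\{0,1\}$. In particular, $\{H(X_1,\dots,X_n)\le 0\}$ coincides with the event $\{H(X_1,\dots,X_n)=0\}$. Letting $\mathcal{D}\subseteq \mathbb{B}^n$ be the set of $(x_1,\dots,x_n)$ on which $H$ takes the value $0$, I would then write
$$
h(p) \;=\; \sum_{(x_1,\dots,x_n)\in \mathcal{D}} p^{\#\{j:\, x_j=0\}}\,(1-p)^{\#\{j:\, x_j=1\}}.
$$
Each summand is a polynomial in $p$ of degree exactly $n$, so the whole right-hand side is a polynomial in $p$ of degree at most $n$. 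Since this identity holds for every $p\in[0,1]$, the function $h$ coincides on $[0,1]$ with this polynomial, which is what had to be proved.

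The only delicate point is logical rather than computational: the argument uses the forward reference to Lemma~\ref{lemma:conservative_implies_selector}, without which we could not reduce $\{H\le 0\}$ to the Boolean event $\{H=0\}$ and obtain the explicit Bernoulli sum. Once the selecting property is granted, the polynomial nature of $h$ is essentially automatic, and as a bonus one reads off that $h(0)=0$, $h(1)=1$, and that $h$ is monotone nondecreasing (the latter because $h$ is the distribution function of $\mathbf{H}(U)$ for $U$ uniform on $[0,1]$, as noted before the lemma).
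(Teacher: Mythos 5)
Your argument is correct and is essentially the same as the paper's: both invoke the forward reference to Lemma~\ref{lemma:conservative_implies_selector} to reduce $\{\mathbf{H}(X)\le 0\}$ to the Boolean event $\{\mathbf{H}(X)=0\}$ on a two-valued Bernoulli input, and both read off the polynomial structure from the resulting Bernoulli sum over the Boolean cube. The only cosmetic difference is that you parameterize directly by $p=\mathbb{P}(X=0)$, while the paper uses success probability $p=\mathbb{P}(X=1)$ and then substitutes $t=1-p$; the paper also groups the summands by the number of zeros to extract the coefficients $a_k$, which it reuses in \eqref{equation:formulamodules}, but that is just a reorganization of the same sum.
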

\begin{proof}
  Consider the Boolean cube $\mathbb{B}^n=\{0,1\}^n$. Since $H$ satisfies the selecting property (see Lemma \ref{lemma:conservative_implies_selector}), for each $B\in \mathbb{B}^n$, the value $H(B) \in \{0,1\}$. For $1 \le k\le  n$,
define
\begin{align*}
a_k&= \#\{B\in \mathbb{B}^n : \text{$B$ has $k$ zeros and $H(B)=0$}\},
\end{align*}
 Observe that $a_0=0$ while $a_n=1$, and that  $a_k \le \binom{n}{k}$, for $k=0, 1,  \ldots, n$.

Let $X$ be a Bernoulli variable with probability of success $p$. Since $h(F_X(0))=h(1-p)$, and
$$
\P\big(\H(X)=0\big)=\sum_{k=0}^n a_k\, (1-p)^k \, p^{n-k}\, ,
$$
we conclude that
$$
h(1-p)=\sum_{k=0}^n a_k\, (1-p)^k\,  p^{n-k}\, .
$$
This is true for any $p \in (0,1)$ and therefore
\begin{equation*}
h(t)=\sum_{k=0}^n a_k\, t^k\, (1-t)^{n-k}\, , \quad \text{for any} \quad t \in [0,1]\, .\qedhere
\end{equation*}
\end{proof}

Observe, from the proof, that the module (of any conservative statistic) may be written~as
\begin{equation}
\label{equation:formulamodules}h(t)=\sum_{k=0}^n a_k \,t^k \,(1-t)^{n-k}=\sum_{k=0}^n \bigg[\frac{a_k}{\binom{n}{k}}\bigg] B^{(n)}_k(t)\, ,
\end{equation}
where the coefficients $a_k$ are \textit{integers} satisfying
\begin{equation}
\label{equation:condition_on_coefs}
a_0=0\, , a_n=1, \quad 0 \le a_k \le \binom{n}{k},  \quad k=0, 1, \ldots, n\, .
\end{equation}
Notice also that, for any module $h$, $h(0)=0$ and $h(1)=1$.

\smallskip
From the proof above,  we deduce  also the following convenient expression for the module~$h$ of a conservative statistic.
\begin{lemma}\label{expressionmodulebernoulli}
For $0 \le t \le 1$, let $J^t$ be a Bernoulli variable with $\P(J^t=0)=t$. Let $H$ be a conservative statistic. Then $\H(J^t)$ is also a Bernoulli variable and
$$
h(t)=\P(\H(J^t)=0)\, .
$$
\end{lemma}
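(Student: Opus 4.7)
The plan is to read off the statement directly from the definition of the module together with the selecting property established in Lemma~\ref{lemma:conservative_implies_selector}. First I would observe that since $J^t$ takes values in $\{0,1\}$, independent copies $J^t_1,\dots,J^t_n$ also take values in $\{0,1\}$. By Lemma~\ref{lemma:conservative_implies_selector}, $H$ satisfies the selecting property, so $H(J^t_1,\dots,J^t_n)\in\{J^t_1,\dots,J^t_n\}\subseteq\{0,1\}$. This immediately gives that $\H(J^t)$ is a Bernoulli variable.

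Next I would invoke the defining property of the module: $F_{\H(J^t)}=h(F_{J^t})$. Evaluating at the point $0$, the distribution function of $J^t$ is
$$F_{J^t}(0)=\P(J^t\le 0)=\P(J^t=0)=t.$$
Therefore
$$\P(\H(J^t)\le 0)=h\bigl(F_{J^t}(0)\bigr)=h(t).$$
Finally, since $\H(J^t)\in\{0,1\}$, the event $\{\H(J^t)\le 0\}$ coincides with $\{\H(J^t)=0\}$, so $\P(\H(J^t)=0)=h(t)$, as claimed.

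There is essentially no obstacle here; the statement is a direct corollary of the definition of conservative statistic combined with the selecting property, and in fact it is implicit in the computation carried out in the proof of the preceding lemma (where $X$ was chosen to be Bernoulli with success probability $p$, which is precisely $J^{1-p}$). The only point worth stating explicitly is the use of Lemma~\ref{lemma:conservative_implies_selector} to ensure that the output $\H(J^t)$ is again $\{0,1\}$-valued, which is what makes $\P(\H(J^t)\le 0)$ and $\P(\H(J^t)=0)$ coincide.
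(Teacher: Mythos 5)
Your proof is correct and takes essentially the same route as the paper: the paper derives the identity by referring back to the computation in the preceding lemma, where $X$ was taken Bernoulli with success probability $p$ and it was shown that $h(1-p)=\P(\H(X)=0)$, which is exactly your statement with $t=1-p$. Your version is slightly more streamlined in that you read the result straight off the defining relation $F_{\H(J^t)}=h(F_{J^t})$ evaluated at $0$, together with the selecting property to pass from $\{\H(J^t)\le 0\}$ to $\{\H(J^t)=0\}$, but the underlying ideas are identical.
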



\begin{example}{\upshape 1) The module of $Mm$ in \eqref{module Zermelo game} can be written in the form \eqref{equation:formulamodules} as follows:
$$
h(t)=(1-(1-t)^2)^2=4\,t^2\,(1-t)^2+4\,t^3\,(1-t)+t^4,
$$
and so $a_0=a_1=0$, $ a_2=a_3=4$, and $a_4=1$ in this example.

\smallskip
2) For any integer  $n\ge 1$, 
 the \textit{order statistic} $H_{r:n}$, where $1 \le r \le n$,  orders the  coordinates $(x_1, \ldots, x_n)$ into $x_{i_1} \le x_{i_2} \le \cdots \le x_{i_n}$ and then selects $H_{r:n}(x_1, \ldots, x_n)=x_{i_r}$. In particular, $H_{n:n}(x_1,\dots, x_n)=\max(x_1,\dots, x_n)$ and $H_{1:n}(x_1,\dots, x_n)=\min(x_1,\dots, x_n)$. These $H_{r:n}$ are conservative statistics; the corresponding modules $h_{r:n}$ are (see \cite{David})  the polynomials
\begin{equation}
\label{module order statistic}h_{r:n}(t)=\sum_{j=r}^n \binom{n}{j} \,t^j (1-t)^{n-j}=\sum_{j=r}^n B^{(n)}_j(t)\, ,
\end{equation}
with coefficients $a_j=0$ for $j<r$ and $a_j={n\choose j}$ for $j\ge r$.

In particular, for the maximum, $h_{n:n}(t)=t^n$; and for the minimum, $h_{1:n}(t)=1-(1-t)^n$.}
\end{example}

\begin{remark}
{\upshape The degree of a module $h$ could be smaller than the dimension $n$. For instance the module of $H(x_1, x_2, x_3)=\max(x_1, x_2)$ is $h(t)=t^2=t^2(1-t)+t^3$, so that its coefficients, as in~\eqref{equation:formulamodules},  are $a_0=a_1=0, a_2=a_3=1$. For a projection, say $H(x_1,\dots, x_n)=x_1$, the module is $h(t)=t$, which can be written in the form of \eqref{equation:formulamodules} as
$$
t=\sum_{j=1}^n {{n-1}\choose{j-1}}\, t^j\, (1-t)^{n-j}=\sum_{j=1}^n \frac{j}{n}\, B_{j}^{(n)}(t).
$$}
\end{remark}

\begin{remark}
{\upshape Not all the polynomials of the form  \eqref{equation:formulamodules} and satisfying \eqref{equation:condition_on_coefs} are  modules, simply because they are not necessarily non decreasing in $[0,1]$, as it is shown, for $n=4$,  by the  polynomial $h(t)=2t(1-t)^3+t^4$ with coefficients $a_0=0, a_1=2, a_2=0, a_3=0, a_4=1$.
}
\end{remark}

\begin{remark}
{\upshape The real polynomials $Q$ of degree $n$ which satisfy $Q(0)=0$, $Q(1)=1$ and $0<Q(x)<1$ for any $x \in (0,1)$ are precisely those polynomials which \textit{may be expressed} as
$$
Q(x)=\sum_{j=0}^m \beta_j\, B^{(m)}_j(x)\,,
$$
with $\beta_0=0$, $\beta_m=1$ and $0 \le \beta_j \le 1$ for $j=0,1,\dots m$. The Bernstein degree $m$ is usually larger than $n$. See {\upshape \cite{QRR}}.}
\end{remark}

\subsection{Sperner statistics}\label{section:sperner}

Sperner statistics, which we are about to introduce, are precisely, as we will show later on (Theorem \ref{theor:selectors equal sperner}),
the \textit{continuous} conservative statistics. 

\smallskip

For each subset $A$ of $\{1,\dots,n\}$ we introduce the function
$\min\nolimits_{A}\colon \mathbb{R}^n \to \mathbb{R}$ given~by
$$
\min_A(x_1, x_2, \ldots, x_n)=\min\{x_i, i \in A\}\, ,
$$
which gives the minimum of the values of the coordinates corresponding to the index subset~$A$. Correspondingly,
$$
\max_A(x_1, x_2, \ldots, x_n)=\max\{x_i, i \in A\}\, .
$$

\smallskip

To each Sperner family $\mathcal{S}=\{A_1, \ldots, A_k\}$ of subsets of $\{1, 2, \ldots n\}$ we associate a \textbf{Sperner statistic} $H_{\mathcal{S}}$ in $\mathbb{R}^n$  given by
\begin{equation}\label{Sperner statistic}
H_{\mathcal{S}}=\max\big(\min_{A_1}, \min_{A_2}, \ldots, \min_{A_k}\big)\, .
\end{equation}

The statistic $H_{\mathcal{S}}$ is a projection (onto a certain coordinate) if and only if $\mathcal{S}$ consist of just one singleton.

The Sperner statistic corresponding to the family consisting of all the subsets of $\{1,\dots,n\}$ of size $r$ is precisely the order statistic $H_{n-r+1:n}$. The Sperner statistic of any disjoint family is termed a \textbf{Zermelo statistic}.

\begin{lemma}
Any Sperner statistic $H_{\mathcal{S}}$ is conservative, and actually, its module $h_{\mathcal{S}}$  
is 
given by the polynomial
\begin{equation}\label{formulamodulesperner}
h_{\mathcal{S}}(t)=1-\sum_{i}(1-t)^{|A_i|}+\sum_{i<j} (1-t)^{|A_i \cup A_j|}-\cdots
\end{equation}
\end{lemma}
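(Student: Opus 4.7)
The plan is to apply inclusion-exclusion to the event $\{H_\mathcal{S}(X_1,\dots,X_n) \le t\}$ after rewriting it in terms of coordinate-wise events.

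Fix $t \in \mathbb{R}$ and set $E_j = \{X_j \le t\}$ for $j=1,\dots,n$; since the $X_j$ are independent copies of $X$, the events $E_j$ are independent with $\P(E_j) = F_X(t)$. The first step is to observe that $\max(y_1,\dots,y_k) \le t$ iff all $y_i \le t$, while $\min_{A_i}(X_1,\dots,X_n) \le t$ iff $X_j \le t$ for some $j \in A_i$; hence
\[
\{H_\mathcal{S}(X_1,\dots,X_n)\le t\} \;=\; \bigcap_{i=1}^k \bigcup_{j \in A_i} E_j,
\]
and so, passing to complements,
\[
\P\bigl(H_\mathcal{S}(X_1,\dots,X_n) > t\bigr) \;=\; \P\!\left(\bigcup_{i=1}^k \bigcap_{j\in A_i} E_j^{\,c}\right).
\]

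Next, I would apply the inclusion-exclusion formula to the right-hand side. For any nonempty $I \subseteq \{1,\dots,k\}$, the event $\bigcap_{i \in I}\bigcap_{j \in A_i}E_j^{\,c}$ equals $\bigcap_{j \in \bigcup_{i\in I}A_i} E_j^{\,c}$, and by independence
\[
\P\!\left(\bigcap_{j \in \bigcup_{i\in I}A_i} E_j^{\,c}\right) = (1-F_X(t))^{\left|\bigcup_{i\in I}A_i\right|}.
\]
Substituting into inclusion-exclusion gives
\[
\P\bigl(H_\mathcal{S} > t\bigr) \;=\; \sum_i (1-F_X(t))^{|A_i|} \;-\; \sum_{i<j}(1-F_X(t))^{|A_i \cup A_j|} \;+\; \cdots,
\]
so that
\[
F_{H_\mathcal{S}(X)}(t) = 1 - \sum_i (1-F_X(t))^{|A_i|} + \sum_{i<j}(1-F_X(t))^{|A_i \cup A_j|} - \cdots .
\]

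Since the right-hand side is a polynomial expression in $F_X(t)$ alone, one concludes that $H_\mathcal{S}$ is conservative with module
\[
h_\mathcal{S}(t) \;=\; 1 - \sum_i (1-t)^{|A_i|} + \sum_{i<j}(1-t)^{|A_i \cup A_j|} - \cdots,
\]
which is the claimed formula \eqref{formulamodulesperner}. There is no real obstacle here; the only point requiring a moment of care is the bookkeeping in passing from $\bigcap_{i\in I}\bigcap_{j\in A_i}E_j^{\,c}$ to $\bigcap_{j\in \bigcup_{i\in I}A_i}E_j^{\,c}$, which is where the cardinalities of unions $|A_{i_1}\cup\cdots\cup A_{i_r}|$ enter as exponents, matching the pattern in \eqref{formulamodulesperner}.
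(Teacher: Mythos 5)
Your argument is essentially the same as the paper's: both pass to the complement event $\{H_{\mathcal{S}}>t\}=\bigcup_{i}\{\min_{A_i}>t\}$, apply inclusion-exclusion, and use independence to evaluate each term as a power of $1-F_X(t)$. You are slightly more explicit in reducing everything to the coordinate events $E_j$ and in noting that the final expression depends on $t$ only through $F_X(t)$ (which is what establishes conservativity), but the underlying computation is identical.
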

\begin{proof}
Let $|\mathcal{S}|=k$ and let $X_1, \ldots, X_n$ be independent copies of a random variable~$X$. Using inclusion/exclusion, write now, for any $t \in [0,1]$,
\begin{equation*}
\begin{aligned}
h_\mathcal{S}(t)&=       \P\big(H_{\mathcal{S}}\big(X_1,  \ldots, X_n\big)\le t\big)=
 \P\Big(\bigcap_{i=1}^k\big\{\min_{A_i}\big(X_1,  \ldots, X_n\big) \le t\big\}\Big)
 \\&=1-\P\Big(\bigcup_{i=1}^k\big\{\min_{A_i}\big(X_1,  \ldots, X_n\big) > t\big\}\Big)
 \\&=1-\sum_{i=1}^k\P\Big(\big\{\min_{A_i}\big(X_1,  \ldots, X_n\big)> t\big\}\Big)+\sum_{i<j} \P\Big(\big\{\min_{A_i\cup A_j}\big(X_1,  \ldots, X_n\big)> t\big\}\Big)-\cdots
 \\&=1-\sum_{i=1}^k(1-F_X(t))^{|A_i|}+\sum_{i<j} (1-F_X(t))^{|A_i \cup A_j|}-\cdots
\end{aligned}
\end{equation*}
This gives \eqref{formulamodulesperner}. Observe also that
\begin{equation*}
1-h_{\mathcal{S}}(1-t)=\sum_{i} t^{|A_i|}-\sum_{i<j} t^{|A_i \cup A_j|}-\ldots\,.\qedhere
\end{equation*}
\end{proof}
For the particular case of a Zermelo statistic, $\mathcal{S}$ is a disjoint family and  the expression of~$h_{\mathcal{S}}$ simplifies to
\begin{equation}
\label{module Zermelo statistic}
h_{\mathcal{S}}(t)=\prod_{i} \big(1-(1-t)^{|A_i|}\big)\, .
\end{equation}
For the module of an order statistic, see \eqref{module order statistic}.

%

\begin{remark}[On Sperner and arbitrary families]
{{\upshape The operator \eqref{Sperner statistic} can be defined for arbitrary families $\mathcal{A}=\{A_1,\dots, A_k\}$ of subsets of $\{1,\dots,n\}$. But observe that
$$
H_\mathcal{A}=\max\big(\min_{A_1}, \dots, \min_{A_k}\big)
$$
would coincide with the Sperner statistic $H_\mathcal{S}$, where $\mathcal{S}$ is the Sperner family obtained by retaining only the minimal (with respect to inclusion) elements of the family $\mathcal{A}$. Clearly, $h_\mathcal{A}=h_\mathcal{S}$.

Also, given a Sperner family $\mathcal{S}$, one could define the associated upset
$$
\mathcal{U}=\{U\subset \{1,\dots,n\}: U\supseteq A_i\text{ for some $A_i\in\mathcal{S}$}\}.
$$
Again, $H_\mathcal{S}\equiv H_{\mathcal{U}}$ and $h_\mathcal{S}\equiv h_{\mathcal{U}}$. See also the remarks preceding Theorem~\ref{equation:monotone_representation}.}}
\end{remark}

The case of Sperner statistics with the identity as module is a bit special.
\begin{lemma}\label{lemma:module identity equal projection}
The  module $h_{\mathcal{S}}$ of a Sperner statistic is  the identity if and only if $\mathcal{S}$ consists of  just one singleton.  In this case,  as we have seen, $H_{\mathcal{S}}$ is a projection.
\end{lemma}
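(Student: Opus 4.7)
My plan is to dispose of the \textit{if} direction in a line and focus the work on the converse. The \textit{if} direction is immediate: for $\mathcal{S}=\{\{j_0\}\}$, $H_\mathcal{S}(x_1,\dots,x_n)=x_{j_0}$ is the $j_0$-th projection, whose module is the identity.

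For the \textit{only if} direction, assume $h_\mathcal{S}(t)=t$ on $[0,1]$. My first step is to extract structural information by comparing coefficients. Substituting $t=1-u$ in \eqref{formulamodulesperner}, or equivalently using the displayed rearrangement $1-h_\mathcal{S}(1-t)=\sum_i t^{|A_i|}-\sum_{i<j} t^{|A_i\cup A_j|}+\cdots$ just below it, the hypothesis becomes the polynomial identity
\[
u=\sum_i u^{|A_i|}-\sum_{i<j}u^{|A_i\cup A_j|}+\cdots.
\]
Reading off the coefficient of $u^1$: the first sum contributes the number $s$ of singletons in $\mathcal{S}$, and no further sum contributes, because the union of two or more \textit{distinct} nonempty sets cannot be a singleton (if $\bigcup_\ell A_{i_\ell}=\{j\}$, each $A_{i_\ell}$ would equal $\{j\}$, contradicting distinctness). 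Hence $s=1$; call the unique singleton $\{j_0\}$.

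Next, I would rule out any further set in $\mathcal{S}$. By the Sperner property no $A\in\mathcal{S}\setminus\{\{j_0\}\}$ can contain $j_0$, so every such $A$ is disjoint from $\{j_0\}$. Set $\mathcal{S}':=\mathcal{S}\setminus\{\{j_0\}\}$ and invoke Lemma~\ref{expressionmodulebernoulli}: taking $J^t_1,\dots,J^t_n$ independent copies of $J^t$ and letting $Z=\{j:J^t_j=0\}$, the event $\{H_\mathcal{S}(J^t_1,\dots,J^t_n)=0\}$ is precisely $\{Z \text{ hits every } A\in\mathcal{S}\}$, and it factorizes as $\{j_0\in Z\}\cap\{Z\text{ hits every }A\in\mathcal{S}'\}$. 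Because these two events depend on disjoint coordinates of $(J^t_1,\dots,J^t_n)$, they are independent, yielding $h_\mathcal{S}(t)=t\cdot h_{\mathcal{S}'}(t)$. Combined with $h_\mathcal{S}(t)=t$, this forces $h_{\mathcal{S}'}\equiv 1$ as a polynomial. But if $\mathcal{S}'\neq\emptyset$, then $h_{\mathcal{S}'}(0)=0$ (at $t=0$ the random set $Z$ is empty a.s.\ and hits no nonempty $A$), a contradiction. Therefore $\mathcal{S}'=\emptyset$ and $\mathcal{S}=\{\{j_0\}\}$.

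The delicate point is the degree-one coefficient comparison: it requires carefully tracking the higher-order inclusion--exclusion contributions and using the Sperner property to dismiss them. Once that yields the unique singleton, the Sperner condition plus the independence factorization close the argument almost automatically.
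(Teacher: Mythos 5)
Your proposal is correct, and it diverges from the paper's own argument at exactly one spot. Both proofs begin identically: the coefficient of $u^1$ in the identity $u=\sum_i u^{|A_i|}-\sum_{i<j}u^{|A_i\cup A_j|}+\cdots$ shows there is exactly one singleton in $\mathcal{S}$ (the paper also notes that a second singleton is impossible by matching linear coefficients). After that, the paper stays purely combinatorial: writing $1=|A_1|<|A_2|\le\cdots$, it observes that the Sperner condition forces every union $|A_i\cup A_j|$ to have size at least $|A_2|+1$, so the coefficient of $t^{|A_2|}$ on the right side of \eqref{para id} is the nonzero count of $A_i$'s of size $|A_2|$, while on the left it is $0$ --- a contradiction. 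You instead invoke the probabilistic factorization $h_\mathcal{S}(t)=t\cdot h_{\mathcal{S}'}(t)$ (valid because the singleton $\{j_0\}$ is disjoint from every other member of $\mathcal{S}$ by the Sperner property) and then derive a contradiction from $h_{\mathcal{S}'}\equiv 1$ versus $h_{\mathcal{S}'}(0)=0$. Your factorization is precisely the device the paper proves later, in part a) of Lemma~\ref{lemma:basis for recursion} (and in the recursion of Lemma~\ref{lemma:recursion}), by the same conditioning argument you give; since you prove it from scratch your argument is self-contained and arguably shorter, whereas the paper keeps the present lemma elementary and defers the probabilistic decomposition to where it is used more systematically. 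Both are valid; yours trades a coefficient-counting estimate for a structural/probabilistic one.
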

\begin{proof}
The converse part is obvious. 
Write
$$
h_{\mathcal{S}}(t)=1-\sum_{i}(1-t)^{|A_i|}+\sum_{i<j} (1-t)^{|A_i \cup A_j|}-\cdots\,,
$$
and observe that if $h_{\mathcal{S}}$ is the identity, then for any $t \in [0,1]$,
\begin{equation}
\label{para id}
t=\sum_{i}t^{|A_i|}-\sum_{i<j} t^{|A_i \cup A_j|}+\cdots
\end{equation}
Assume that $|A_1| \le |A_2| \le \cdots \le |A_k|$. Observe that $\mathcal{S}$ must contain (at least) one singleton, so that $|A_1|=1$. We must show that $k=1$.

\smallskip

Assume that $k \ge 2$. Notice that $|A_2|=1$ is impossible (as the coefficients of the linear terms in \eqref{para id} would not match). So $|A_2|\ge 2$.

As no $A_i$ is contained in any other $A_j$, we have that
%
$$
\begin{aligned}
|A_1 \cup A_j|\ge |A_j|+1 \ge |A_2|+1, &\quad \text{if} \quad j \ge 2\, , \\
|A_i \cup A_j| \ge |A_i|+1\ge |A_2|+1, &\quad \text{if} \quad 1 <i <j\,.
\end{aligned}
$$
Thus if, say, $|A_2|=|A_3|=\cdots =|A_l|$ and $|A_l|< |A_{l+1}|$ or $l=k$, then
$$
t=t+(l-1)t^{|A_2|}+\text{higher order terms}\, , \quad \mbox{for any $t \in [0,1]$}\, ,
$$
which is impossible.\end{proof}

\begin{remark}[Isomorphic statistics] \label{remark:same module does not imply isomorphic}{\upshape
We say that two conservatives statistics $H$ and $G$ in~$\mathbb{R}^n$ are  \textit{isomorphic} if there exists a permutation $\sigma$ of the index set $\N_n$ such that
$$
G(x_1, x_2, \ldots, x_n)=H(x_{\sigma(1)},x_{\sigma(2)}, \ldots, x_{\sigma(n)})\, .
$$
Obviously, isomorphic conservative statistics have the same module; the converse does not hold in general.
For instance, the statistics associated to the Sperner families $\mathcal{F}=\{\{1, 2\}, \{3, 4\}\}$ and $\mathcal{G}=\{\{1, 2\}, \{1, 3\}, \{2, 3, 4\}\}$ have the same module, namely $h(t)=1-2(1-t)^2+(1-t)^4$. Notice that, by Lemma {\rm \ref{lemma:module identity equal projection}}, two Sperner statistics with the identity as module are isomorphic. }
\end{remark}

\subsubsection{Some properties of modules of Sperner statistics}

Next, we collect a few useful observations about the modules of Sperner statistics.

\begin{lemma}\label{derivadas en 0 y 1}Let $\mathcal{S}=\{A_1, \ldots, A_k\}$ be a Sperner family  $($not consisting of just one singleton$)$. Then
$$
    h_{\mathcal{S}}^{\prime}(0)=\Big|\displaystyle\bigcap_{j=1}^k A_j\Big|\, ,\quad\text{and}\quad
    h_{\mathcal{S}}^{\prime}(1)=\mbox{number of singletons among the $A_j$}\,.
    $$
          Notice also  that if $h_{\mathcal{S}}^{\prime}(1)>0$ then $h_{\mathcal{S}}^{\prime}(0)=0$, so that always $h_{\mathcal{S}}^{\prime}(0)\cdot h_{\mathcal{S}}^{\prime}(1)=0$.
\end{lemma}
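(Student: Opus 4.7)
The plan is to compute the two derivatives by differentiating the two expansions of $h_{\mathcal{S}}$ provided by equation \eqref{formulamodulesperner} and its companion $1-h_{\mathcal{S}}(1-t) = \sum_i t^{|A_i|} - \sum_{i<j} t^{|A_i\cup A_j|} + \cdots$. The point is that evaluating a derivative of a polynomial of the form $\sum_T (-1)^{|T|+1} u^{|A_T|}$ (where $A_T = \bigcup_{i\in T}A_i$) is much cleaner at $u=0$ and $u=1$ than at generic points, and the two boundary derivatives split between these two forms.

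For $h_{\mathcal{S}}'(1)$, I would differentiate \eqref{formulamodulesperner} directly to get $h_{\mathcal{S}}'(t) = \sum_T (-1)^{|T|+1}|A_T|(1-t)^{|A_T|-1}$. Evaluated at $t=1$, only the terms with $|A_T|=1$ survive. Since $\mathcal{S}$ is a Sperner family, $A_i \neq A_j$ when $i\neq j$, so $|A_i\cup A_j|\geq 2$, and by induction the same holds for larger unions; only the singleton $A_i$'s from the first sum contribute. This yields exactly the number of singleton $A_i$'s, each contributing $1$.

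For $h_{\mathcal{S}}'(0)$, I would set $g(t) = 1 - h_{\mathcal{S}}(1-t)$, so $h_{\mathcal{S}}'(0) = g'(1)$, and differentiate the second expansion to get $g'(1) = \sum_{T\neq\emptyset} (-1)^{|T|+1}|A_T|$. The main computation is to swap the sum over $T$ with the sum over elements: writing $|A_T| = \sum_{x\in[n]} \mathbf{1}[x\in A_T]$ and setting $S(x) = \{i : x\in A_i\}$, each $x$ contributes $\sum_{T\neq\emptyset,\, T\cap S(x)\neq\emptyset} (-1)^{|T|+1}$. Splitting $T = T_1 \sqcup T_2$ with $\emptyset\neq T_1\subseteq S(x)$ and $T_2\subseteq [k]\setminus S(x)$, the sum factors as $1 \cdot \sum_{T_2\subseteq[k]\setminus S(x)} (-1)^{|T_2|}$, which equals $1$ if $S(x)=[k]$ and $0$ otherwise. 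Hence $g'(1) = \#\{x : x\in A_j \text{ for all }j\} = |\bigcap_j A_j|$.

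For the incompatibility, if $h_{\mathcal{S}}'(1)>0$ then some singleton $\{a\}$ lies in $\mathcal{S}$; since $\mathcal{S}$ is not a single singleton by hypothesis, there is another $A_j\in\mathcal{S}$ with $A_j\neq\{a\}$, and the Sperner condition forbids $\{a\}\subseteq A_j$, so $a\notin A_j$, forcing $\bigcap_j A_j = \emptyset$ and $h_{\mathcal{S}}'(0)=0$. The main (mild) obstacle is the element-by-element inclusion-exclusion bookkeeping for $h_{\mathcal{S}}'(0)$; everything else is a direct derivative calculation plus a one-line appeal to the Sperner property.
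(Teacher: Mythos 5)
Your proof is correct and takes essentially the same route as the paper: differentiate the inclusion–exclusion expansion \eqref{formulamodulesperner} and evaluate at the endpoints, reducing $h_{\mathcal{S}}'(0)$ to the alternating-sum identity $\sum_{T\ne\emptyset}(-1)^{|T|+1}|A_T|=\bigl|\bigcap_j A_j\bigr|$ (which the paper cites as ``inclusion/exclusion'' with $\sum_k(-1)^k\binom{n}{k}=0$, and which you verify element-by-element via $S(x)$), and reducing $h_{\mathcal{S}}'(1)$ to the observation that only $|A_T|=1$ terms survive, forcing $T$ to be a singleton index with $A_T$ a singleton set. The passage through $g(t)=1-h_{\mathcal{S}}(1-t)$ for the derivative at $0$ is a cosmetic reparametrization of the same computation, and the final incompatibility argument (a singleton in a Sperner family with $k\ge 2$ meets no other member) is the paper's as well.
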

\begin{proof}
At $t=0$ we have, by \eqref{formulamodulesperner}, that
$$
h_{\mathcal{S}}^{\prime}(0)=\sum_{i}|A_i|-\sum_{i<j} |A_i \cup A_j| + \cdots\, ,
$$
which by inclusion/exclusion may be written, using that $\sum_{k=0}^n (-1)^k {n\choose k}=0$, as
$$
h_{\mathcal{S}}^{\prime}(0)
=\Big|\bigcap_{j=1}^k A_j\Big|\,.
$$

At $t=1$, using again  \eqref{formulamodulesperner}, we have
$$
h_{\mathcal{S}}^{\prime}(1)=\sum_{i; |A_i|=1} |A_i|=\mbox{number of singletons among the $A_i$}\, .
$$

If $h_{\mathcal{S}}^{\prime}(1)>0$, then $\mathcal{S}$ contains (at least) one singleton, which has no intersection with any of the other $A_j\in \mathcal{S}$, and so $h_{\mathcal{S}}^{\prime}(0)=0$.
\end{proof}

\begin{lemma}\label{lemma:bounds zero one}Let $\mathcal{S}$ be a Sperner family. For $t\in(0,1)$,
$$
0< h_\mathcal{S}(t)<1.
$$
\end{lemma}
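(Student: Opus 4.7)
The plan is to use Lemma \ref{expressionmodulebernoulli}, which asserts that $h_\mathcal{S}(t)=\P(H_\mathcal{S}(J^t_1,\ldots,J^t_n)=0)$, where $J^t_1,\ldots,J^t_n$ are independent Bernoulli variables with $\P(J^t_i=0)=t$ and $\P(J^t_i=1)=1-t$. Since $H_\mathcal{S}$ is a conservative statistic (hence takes values in $\{0,1\}$ on $\mathbb{B}^n$ by the selecting property), the two inequalities $0<h_\mathcal{S}(t)$ and $h_\mathcal{S}(t)<1$ amount to showing that both events $\{H_\mathcal{S}=0\}$ and $\{H_\mathcal{S}=1\}$ have positive probability under the Bernoulli$(1-t)$ product measure on $\mathbb{B}^n$.

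For the lower bound, consider the event $E_0=\{J^t_1=\cdots=J^t_n=0\}$, which has probability $t^n>0$ for $t\in(0,1)$. On $E_0$, every $\min_{A_i}(J^t_1,\ldots,J^t_n)=0$, and therefore $H_\mathcal{S}=\max_i\min_{A_i}=0$. Hence $h_\mathcal{S}(t)\ge t^n>0$.

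For the upper bound, fix any $A_\ell\in\mathcal{S}$; since the members of a Sperner family are nonempty, $|A_\ell|\ge 1$. Consider the event $E_1=\{J^t_i=1\text{ for all }i\in A_\ell\}$, which has probability $(1-t)^{|A_\ell|}>0$ for $t\in(0,1)$. On $E_1$ we have $\min_{A_\ell}(J^t_1,\ldots,J^t_n)=1$, and consequently $H_\mathcal{S}\ge 1$, so $H_\mathcal{S}=1$. Therefore $1-h_\mathcal{S}(t)=\P(H_\mathcal{S}=1)\ge (1-t)^{|A_\ell|}>0$, which yields $h_\mathcal{S}(t)<1$.

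There is no substantive obstacle here; the argument is essentially a one-line probabilistic observation, and the only point to keep track of is that the $A_i$ are nonempty (guaranteed by the definition of Sperner family) so that the exponent $|A_\ell|$ is at least $1$.
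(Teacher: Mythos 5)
Your proof is correct. The paper proves the lemma by the comparison $\min \le H_{\mathcal{S}} \le \max$ between selectors on $\mathbb{R}^n$, which immediately transfers to the modules and gives $t^n \le h_{\mathcal{S}}(t) \le 1-(1-t)^n$; it then remarks that the bounds sharpen to $t^{b} \le h_{\mathcal{S}}(t) \le 1-(1-t)^{M}$, with $M$ the size of the smallest member of $\mathcal{S}$ and $b$ the smallest blocking set. You instead work directly on the Boolean cube under the Bernoulli product measure, via Lemma \ref{expressionmodulebernoulli}, and exhibit explicit positive-probability events on which $H_{\mathcal{S}}$ is forced to be $0$ or $1$. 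The two arguments are closely related — your all-zeros event gives exactly $t^n$, and your event $E_1$ automatically produces the paper's refined upper bound $1-(1-t)^{|A_\ell|}$ — but they sit at different levels: the paper compares statistics on $\mathbb{R}^n$ and invokes known modules of $\min$ and $\max$, while you reason purely combinatorially on $\mathbb{B}^n$. Your route is arguably a touch more self-contained, since it avoids appealing to the modules of the order statistics, and it surfaces the sharper exponent $|A_\ell|$ with no extra work.
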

\begin{proof} Just observe that
$\min\le H_{\mathcal{S}}\le \max$, so that, for $t\in(0,1)$,
$$
0<t^n\le h_\mathcal{S}(t)\le 1-(1-t)^n<1.
$$

These bounds can be easily improved. Let $M=M(\mathcal{S})$ be the size of the smallest member of $\mathcal{S}$. Say $|A_1|=M$. Then, $h_{\mathcal{S}}(t)\le 1-(1-t)^{M}<1$, as $\min\nolimits_{A_1}\le H_{\mathcal{S}}$. On the other hand, let $b=b(\mathcal{S})$ the smallest size of a set $B\subset \N_n$ (if any) which intersects every $A_j\in \mathcal{S}$. Then, for every $t\in (0,1)$,
$
0<t^b \le h_{\mathcal{S}}(t)$, as $H_{\mathcal{S}}\le \max_B$. 
\end{proof}

\begin{lemma}\label{lemma:basis for recursion}
Let $\mathcal{S}$ be a Sperner family $($not consisting of just one singleton$)$. Then:
\begin{itemize}
\item[\rm a)] If\/ $h_\mathcal{S}^{\prime}(1) >0$, then $h(t) <t$, for any $t \in (0,1)$.
\item[\rm b)] If\/ $h_\mathcal{S}^{\prime}(0) >0$, then $h(t) >t$, for any $t \in (0,1)$.
\end{itemize}
\end{lemma}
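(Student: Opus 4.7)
My plan is to convert the problem into a purely combinatorial probability via the Bernoulli representation of Lemma~\ref{expressionmodulebernoulli}. If $T$ denotes the random subset of $\{1,\ldots,n\}$ obtained by including each index independently with probability $t$ (so $T=\{j:J^t_j=0\}$), then
$$
h_{\mathcal{S}}(t)=\P\big(T\cap A_i\neq \emptyset \text{ for every } i=1,\ldots,k\big).
$$
In this language the two hypotheses become quite concrete, and each part will follow by conditioning on a single distinguished coordinate.

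For part (a), Lemma~\ref{derivadas en 0 y 1} supplies a singleton $\{i_0\}\in\mathcal{S}$. The Sperner condition forces $i_0\notin A_j$ for every other $A_j$, while the standing hypothesis rules out $\mathcal{S}=\{\{i_0\}\}$, so $k\geq 2$. Setting $\mathcal{S}'=\mathcal{S}\setminus\{\{i_0\}\}$, the factorization
$$
H_{\mathcal{S}}=\max\big(X_{i_0},\,H_{\mathcal{S}'}(X)\big),
$$
with $X_{i_0}$ independent of $H_{\mathcal{S}'}(X)$ (no $A_j\in\mathcal{S}'$ involves $i_0$), yields $h_{\mathcal{S}}(t)=t\cdot h_{\mathcal{S}'}(t)$. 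Lemma~\ref{lemma:bounds zero one} applied to the Sperner family $\mathcal{S}'$ gives $h_{\mathcal{S}'}(t)<1$ on $(0,1)$, and hence $h_{\mathcal{S}}(t)<t$.

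For part (b), Lemma~\ref{derivadas en 0 y 1} provides an index $i_0\in\bigcap_j A_j$. I will split the event $\{T\cap A_i\neq \emptyset \text{ for all } i\}$ according to whether $i_0\in T$. On the event $\{i_0\in T\}$, of probability $t$, the full event occurs automatically since $i_0\in A_i$ for every $i$. On the complementary event $\{i_0\notin T\}$, the full event is equivalent to $T\cap A_i'\neq\emptyset$ for every $i$, where $A_i'=A_i\setminus\{i_0\}$. Each $A_i'$ is nonempty: the assumption that $\mathcal{S}$ is not a lone singleton rules out $A_i=\{i_0\}$, directly when $k=1$ and via the Sperner property when $k\geq 2$ (otherwise $\{i_0\}\subseteq A_j$ for all $j$ would contradict Sperner). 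Hence $\{A_1',\ldots,A_k'\}$ is a Sperner family (after retaining only minimal elements if necessary), and Lemma~\ref{lemma:bounds zero one} yields the corresponding module value strictly in $(0,1)$. By independence of the restrictions of $T$ to $\{i_0\}$ and to its complement,
$$
h_{\mathcal{S}}(t)=t+(1-t)\cdot h_{\mathcal{S}''}(t)>t\quad\text{for } t\in(0,1),
$$
where $\mathcal{S}''$ denotes this reduced Sperner family.

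The one step requiring a small case check is the verification in part (b) that every $A_i'$ is nonempty; this is exactly where the hypothesis that $\mathcal{S}$ is not a single singleton enters. Beyond this bookkeeping, both parts reduce to a single conditioning identity combined with the two-sided bound of Lemma~\ref{lemma:bounds zero one}.
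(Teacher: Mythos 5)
Your proof is correct and follows essentially the same route as the paper: in part~(a) you split off the singleton and use independence to get $h_{\mathcal S}(t)=t\,h_{\mathcal S'}(t)$, and in part~(b) you condition on the common element $i_0$ to get $h_{\mathcal S}(t)=t+(1-t)h_{\mathcal S''}(t)$, then invoke Lemma~\ref{lemma:bounds zero one} in both cases. The only cosmetic difference is that you phrase the conditioning in the Bernoulli random-subset language of Lemma~\ref{expressionmodulebernoulli}, whereas the paper uses independent uniform variables; both are the same argument, and indeed both parts are special cases of the recursion of Lemma~\ref{lemma:recursion}.
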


\begin{proof}
a) If $h_\mathcal{S}^{\prime}(1) >0$, then, by Lemma \ref{derivadas en 0 y 1}, $\mathcal{S}$ contains a singleton, say $A_1=\{a\}$. Let $\mathcal{T}=\mathcal{S}\setminus A_1\neq \emptyset$. Each member of $\mathcal{T}$ has empty intersection with $A_1$. We claim that
$$
h_\mathcal{S}(t)=t\, h_{\mathcal{T}}(t) \quad\text{for any $t \in (0,1)$.}
$$
Consider $U_1, \ldots, U_n$ independent uniform variables in $[0,1]$. Then
$$\begin{aligned}
h_\mathcal{S}(t)&=h_\mathcal{S}(\P(U\le t))=\P\Big(\bigcap_{j=1}^k \big\{\min_{A_j}(U_1, \ldots, U_n)\le t\big\}\Big)
\\&=\P\Big(\{U_a\le t\}\ {\textstyle{\bigcap}}\ \bigcap_{j=2}^k \big\{\min_{A_j}(U_1, \ldots, U_n)\le t\big\}\Big)
=t\, \P\Big(\bigcap_{j=2}^k \big\{\min_{A_j}(U_1, \ldots, U_n)\le t\big\}\Big)
\\
&=t\, \P(H_\mathcal{T}(U_1,\dots, U_n)\le t)=t\, h_\mathcal{T}(\P(U\le t))=t\, h_\mathcal{T}(t).
\end{aligned} $$
From Lemma \ref{lemma:bounds zero one} and the fact that the family $\mathcal{T}$ is not empty, we conclude that $h_\mathcal{S}(t)<t$.

\smallskip

b) If $h_\mathcal{S}^{\prime}(0) >0$, then $\bigcap_{j=1}^k A_j\neq \emptyset$, say $1 \in \bigcap_{j=1}^k A_j$. Now the singleton $\{1\}$ is not a member of $\mathcal{S}$. Define a new Sperner family $\mathcal{T}=\{B_1, B_2, \ldots, B_k\}$, where, for $1 \le j \le k$, we set $B_j=A_j\setminus \{1\}$, and observe that each $B_j$ is not empty.

Again, if $U_1, \ldots, U_n$ are independent uniform variables, we have, for  each $t \in [0,1]$
$$\begin{aligned}
&\P\Big(\bigcap_{j=1}^k \big\{\min_{A_j}(U_1, \ldots, U_n)\le t\big\}\Big)
\\
&\ =\P\Big(\bigcap_{j=1}^k \big\{\min_{A_j}(U_1, \ldots, U_n)\le t\big\}\bigcap \{U_1 \le t\}\Big)
+
 \P\Big(\bigcap_{j=1}^k \big\{\min_{A_j}(U_1, \ldots, U_n)\le t\big\}\bigcap \{U_1 > t\}\Big)
 \\
  &\ =\P\big( U_1 \le t\big)+
 \P\Big(\bigcap_{j=1}^k \big\{\min_{B_j}(U_1, \ldots, U_n)\le t\big\}\bigcap \{U_1 > t\}\Big)
 \\
 &\ =\P\big( U_1 \le t\big)+
 \P\Big(\bigcap_{j=1}^k \big\{\min_{B_j}(U_1, \ldots, U_n)\le t\big\}\Big)\,\P\big(U_1 > t\big)\,.
\end{aligned} $$
Therefore, for each $t \in [0,1]$,
$$
h_{\mathcal{S}}(t)=t+(1-t)\,h_{\mathcal{T}}(t)\, .
$$
and consequently, because of Lemma \ref{lemma:bounds zero one}, we have
$
h_{\mathcal{S}}(t)>t
$, for $t \in (0,1)$.
\end{proof}

We now describe a recursive construction of $h_{\mathcal{S}}$ based upon expressing $h_{\mathcal{S}}$ in terms of modules associated to smaller families. This construction is somehow implicit in  the proof of Lemma \ref{lemma:basis for recursion}.

 For $1 \le r \le n$, we define $\mathcal{S}\setminus r$ as follows: from  the family $\{A_1\setminus \{r\}, \ldots, A_k\setminus \{r\}\}$ remove successively any set which is superset of any other set in the (remaining) family. The resulting family is a Sperner family of $\{1,\dots,n\} \setminus \{r\}$ unless $\mathcal{S}$ contains the singleton~$\{r\}$;  in this case we end up with $\mathcal{S}\setminus r=\emptyset$, and we conventionally agree that $h_{\mathcal{S}\setminus r} \equiv 0$.

We also define, for $1 \le r \le n$, the family  $\mathcal{S}\vdash r=\{A_j, 1 \le j \le k, r \notin A_j\}$. This family is a Sperner family (of $\{1,\dots,n\} \setminus \{r\}$) unless $r \in \bigcap_{j=1}^k A_j$; in this case we end up with $\mathcal{S}\vdash r=\emptyset$ and we conventionally agree that $h_{\mathcal{S}\vdash r} \equiv 1$.

With these two operations and the corresponding conventions we may state:
\begin{lemma}\label{lemma:recursion} For each $t \in [0,1]$ and $1 \le r \le n$,
$$
h_{\mathcal{S}}(t)=t \,h_{\mathcal{S}\vdash r}(t)+(1-t) \,h_{\mathcal{S}\setminus r}(t)\, .
$$
\end{lemma}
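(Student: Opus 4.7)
The plan is to compute $h_{\mathcal{S}}(t)$ probabilistically, using Lemma~\ref{expressionmodulebernoulli} (or equivalently independent uniform variables): take $U_1,\dots,U_n$ independent and uniform on $[0,1]$, and condition on the position of the coordinate $U_r$ relative to $t$. Writing
$$
h_{\mathcal{S}}(t)=\P\Big(\bigcap_{j=1}^k\big\{\min\nolimits_{A_j}(U_1,\dots,U_n)\le t\big\}\Big),
$$
I would split this probability according to whether $U_r\le t$ (probability $t$) or $U_r>t$ (probability $1-t$), and use that the remaining variables $\{U_i:i\ne r\}$ are independent uniforms, independent of $U_r$.

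On $\{U_r\le t\}$, every $A_j$ containing $r$ automatically satisfies $\min\nolimits_{A_j}\le U_r\le t$, so those constraints are redundant. What remains is the intersection over indices $j$ with $r\notin A_j$, i.e. over the family $\mathcal{S}\vdash r$, evaluated with the uniform variables indexed by $\{1,\dots,n\}\setminus\{r\}$. By independence this conditional probability is $h_{\mathcal{S}\vdash r}(t)$, and the contribution to $h_{\mathcal{S}}(t)$ is $t\, h_{\mathcal{S}\vdash r}(t)$. The convention $h_{\mathcal{S}\vdash r}\equiv 1$ when $r\in\bigcap_j A_j$ corresponds exactly to the case where every constraint becomes redundant (empty intersection = full probability).

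On $\{U_r>t\}$, the event $\{\min\nolimits_{A_j}(U_1,\dots,U_n)\le t\}$ coincides with $\{\min\nolimits_{A_j\setminus\{r\}}(U_1,\dots,U_n)\le t\}$ for every $j$, since $U_r$ cannot be the witness. Replacing each $A_j$ by $A_j\setminus\{r\}$ and then discarding supersets (which does not change the intersection, as a superset condition is implied by its subset condition) yields the Sperner family $\mathcal{S}\setminus r$, and by independence this conditional probability is $h_{\mathcal{S}\setminus r}(t)$, contributing $(1-t)\,h_{\mathcal{S}\setminus r}(t)$. The convention $h_{\mathcal{S}\setminus r}\equiv 0$ when $\{r\}\in\mathcal{S}$ matches the fact that then some $A_j\setminus\{r\}=\emptyset$ forces the event to be empty.

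Adding the two contributions gives the claimed identity. No real obstacle is expected; the only delicate point is bookkeeping the two boundary cases so that the conventions $h_{\mathcal{S}\vdash r}\equiv 1$ and $h_{\mathcal{S}\setminus r}\equiv 0$ are consistent with the probabilistic decomposition — but both are easily verified directly.
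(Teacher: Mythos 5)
Your proposal is correct and matches the paper's argument exactly: both condition on the event $\{U_r\le t\}$ versus $\{U_r>t\}$ for independent uniforms $U_1,\dots,U_n$ and identify the resulting conditional probabilities with $h_{\mathcal{S}\vdash r}(t)$ and $h_{\mathcal{S}\setminus r}(t)$. The paper states this in one line; you have simply filled in the routine bookkeeping, including the two boundary conventions, all of which checks out.
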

\begin{proof}
Write $h_{\mathcal{S}}(t)=\P\big(H_{\mathcal{S}}(U_1, \ldots, U_n)\le t\big)$, where the $U_j$ are uniform and independent random variables, and condition on the partition $\{U_r\le t, U_r >t\}$.
\end{proof}

\begin{remark}[Stochastic Logic]{\upshape
In Stochastic Logic (see \cite{QuianRiedel} and \cite{QuianRiedeletal}), for a general Boolean function $J$ and for probabilities $0\le p_r \le 1$ for $1 \le r \le n$,  one considers independent Bernoulli variables $B_{p_r}$, $ 1 \le r \le n$, with $\P(B_{p_r}=0)=p_r$ and observes that the variable $J(B_{p_1}, \ldots, B_{p_n})$ is Bernoulli with a probability of attaining 0 given by a multilinear polynomial on $p_1, \ldots, p_n$ of degree 1 in each variable.

For a Sperner statistic $H_{\mathcal{S}}$ of $\mathcal{S}=\{A_1, \ldots, A_k\}$ one has that
$$
\P\big(H_{\mathcal{S}}\big(B_{p_1}, \ldots, B_{p_n}\big)=0\big)=1-\sum_{r} \prod_{i \in A_r}(1-p_i)+\sum_{r <s} \,\prod_{i \in A_r\cup A_s}(1-p_i)-\cdots
$$
Compare with equation \eqref{formulamodulesperner} and Lemma \ref{expressionmodulebernoulli}.}
\end{remark}

\section{Selectors}\label{section:selectors}
We shall show now that selectors are the \textit{continuous} conservative statistics (see Theorem~\ref{theor:continuous conservative equal selector}). Further, in Section~\ref{subsection:selectors and Sperner}, we will show that selectors are exactly the Sperner statistics we have just introduced. The key observation for this latter result is that selectors are determined by their restriction to the Boolean cube $\mathbb{B}^n$, and that they are monotone in~$\mathbb{R}^n$ (and in $\mathbb{B}^n$).
Both characterizations of selectors appear summarized in Theorem \ref{theor:continuous conservative equal selectors equal sperner}.


\subsection{Selectors and conservative statistics}\label{section:poly g}

\begin{lemma}\label{lemma:conservative_implies_selector}
Any conservative statistic satisfies the selecting property.
\end{lemma}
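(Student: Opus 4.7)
The plan is to argue by contradiction: suppose $H$ fails the selecting property at some point $(x_1^*, \ldots, x_n^*)$, so that $v:=H(x_1^*, \ldots, x_n^*) \notin \{x_1^*, \ldots, x_n^*\}$, and then construct a single random variable $X$ whose law is incompatible with the identity $F_{\mathbf{H}(X)}=h(F_X)$.

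The natural choice for $X$ is a discrete variable supported exactly on the distinct values $\{y_1, \ldots, y_m\}$ among $x_1^*, \ldots, x_n^*$, each taken with positive probability (e.g.\ uniform on this finite set). By independence of the copies $X_1, \ldots, X_n$, the event $\{(X_1,\ldots,X_n)=(x_1^*,\ldots,x_n^*)\}$ has positive probability, so $\mathbf{H}(X)$ attains the value $v$ with positive probability. Therefore $F_{\mathbf{H}(X)}$ has a genuine jump at $v$.

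The contradiction will come from the other side of the equation $F_{\mathbf{H}(X)}=h(F_X)$. Since $X$ is supported on $\{y_1,\ldots,y_m\}$, its distribution function $F_X$ is a step function whose only discontinuities occur at the $y_i$'s. Hence $F_X$ is constant on an open neighbourhood of $v$ (as $v$ differs from every $y_i$), so $h(F_X)$ is also constant on that neighbourhood and cannot have a jump at $v$. This contradicts the jump of $F_{\mathbf{H}(X)}$ at $v$, completing the proof.

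There is no real obstacle here; the only thing to be a bit careful about is that $h$ need not be continuous or injective, which is why the argument is phrased in terms of constancy of $F_X$ on a neighbourhood of $v$ (where $h(F_X)$ is therefore automatically constant) rather than in terms of inverting $h$. Note also that the measurability of $H$ is only used implicitly to ensure that $\mathbf{H}(X)$ is a bona fide random variable so that $F_{\mathbf{H}(X)}$ is well defined.
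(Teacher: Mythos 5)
Your proof is correct and uses essentially the same idea as the paper's: take $X$ to be a discrete variable uniformly distributed on the coordinates of the offending point, observe that $F_X$ is then a step function whose jumps occur only at those coordinates, and conclude from the identity $F_{\mathbf{H}(X)}=h(F_X)$ that $\mathbf{H}(X)$ can only take those coordinate values. You phrase it as a contradiction while the paper gives it directly, but that is a cosmetic difference. One small point in your favour: the paper's proof justifies the step ``jumps of $h(F_X)$ occur only at jumps of $F_X$'' by invoking that $h$ is a nondecreasing polynomial, a fact that is proved later and itself relies on this lemma; your version avoids this apparent circularity by observing that the conclusion requires nothing about $h$ beyond it being a function (if $F_X$ is constant on a neighbourhood of $v$, then so is $h\circ F_X$, whatever $h$ is).
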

\begin{proof}
Let $H$ be conservative and let $(x_1, \ldots, x_n)$ be any point in $\mathbb{R}^n$. Let $X$ be a random variable such that $\P(X=x_j)=1/n$. Observe that the coordinates $x_j$ are not necessarily all distinct. The distribution function of $X$ has jumps exactly at the $x_j$, and, therefore, the distribution function $h(F_X)$ has jumps at most at the $x_j$, as $h$ is a nondecreasing polynomial. We conclude that the random variable $H(X_1, \ldots, X_n)$ takes  values only on $\{x_1, \ldots, x_n\}$.
\end{proof}\endproof

\textit{There are statistics satisfying the selecting property  which are not conservative statistics.} Define $H$ in $\mathbb{R}^2$ by
$$
H(x,y)=\begin{cases}
x, & \text{if} \quad x \le 0\, ,\\
y, & \text{if} \quad x >0\, .
\end{cases}
$$
Assume that $H$ is conservative with module $h$. Fix $p \in (0,1)$ and let $X$ be the variable $\P(X=-1)=p$, $P(X=1)=1-p$, and let $Y$ be an independent copy of $X$. Now, $H(X,Y)$ takes the value $-1$ with probability $1-(1-p)^2$, and thus $h(p)=1-(1-p)^2$, for every $p \in (0,1)$. Again fix $p \in (0,1)$ and let $X$ be the variable $\P(X=1)=p$, $\P(X=2)=1-p$, and let $Y$ be an independent copy of $X$. Now, $H(X,Y)=Y$, and therefore $h(p)=p$, for every $p \in (0,1)$. This contradiction shows that $H$ is not conservative.

\begin{lemma}\label{lemma:selectorcontinuousimpliesconservative}
Any  selector is a conservative statistic.
\end{lemma}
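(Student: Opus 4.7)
The plan is to show that for a continuous selector $H$, the indicator $\mathbf{1}[H(x_1,\dots,x_n)\le t]$ is a function only of the pattern $S=\{i: x_i\le t\}\subseteq\{1,\dots,n\}$. Granted this key claim, independence of the $X_i$ gives
\[
\P(\mathbf{H}(X)\le t)=\sum_{S\subseteq\{1,\dots,n\}}g(S)\,p^{|S|}(1-p)^{n-|S|},\qquad p=F_X(t),
\]
so the right-hand side is a polynomial $h$ in $p$, and $F_{\mathbf{H}(X)}=h(F_X)$ as required.

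To construct the Boolean function $g$, I would first restrict attention to the open orthant $O^t_S=\{x\in\mathbb{R}^n: x_i<t\text{ for }i\in S,\ x_i>t\text{ for }i\notin S\}$. This set is convex, hence connected, and on it the selector satisfies $H(x)\neq t$ (since $H(x)$ equals some coordinate and no coordinate equals $t$). Continuity of $H$ forces $\mathrm{sgn}(H(x)-t)$ to be constant on $O^t_S$. To check that this value does not depend on $t$ either, I would evaluate at the test point $\tilde x(S,t)$ with $\tilde x_i=t-1$ for $i\in S$ and $\tilde x_i=t+1$ otherwise; then $H(\tilde x(S,t))-t\in\{-1,+1\}$ is continuous in $t$ and discretely valued, hence constant. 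This produces a well-defined $g\colon\mathcal{P}(\{1,\dots,n\})\to\{0,1\}$.

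Extending the identification $\mathbf{1}[H(x)\le t]=g(S)$ from the orthant to all $x$ with pattern $S$ proceeds in two steps. When no coordinate of $x$ equals $t$, the linear segment from $x$ to $\tilde x(S,t)$ lies entirely inside $O^t_S$, so $H$ cannot cross $t$ along it and the identification persists. The delicate case, which I expect to be the main obstacle, is when some $x_i=t$. Here I would perturb the coordinates equal to $t$ slightly downward, obtaining $x^\epsilon\in O^t_S$ with the same pattern $S$; continuity gives $H(x^\epsilon)\to H(x)$ as $\epsilon\to 0$. If $g(S)=1$, the inequality $H(x^\epsilon)\le t$ passes to the limit. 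If $g(S)=0$, to rule out $H(x)=t$ in the limit I would use a pigeonhole argument: along a subsequence the index $j(\epsilon)\notin S$ selected by $H$ at $x^\epsilon$ is constant equal to some $j^*\notin S$, which was not perturbed, so $H(x^\epsilon)=x_{j^*}$ along the subsequence, and $x_{j^*}>t$ strictly, forcing $H(x)=x_{j^*}>t$. This closes the key claim and hence the lemma.
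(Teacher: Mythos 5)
Your proposal is correct and follows essentially the same strategy as the paper's proof: partition $\mathbb{R}^n$ by the pattern of which coordinates lie $\le t$, use connectedness of the open orthant together with continuity and the selecting property (no coordinate equals $t$ there) to show the sign of $H-t$ is constant on each cell, verify $t$-independence, and count patterns by $|S|$ to get the Bernstein expansion. The only difference is cosmetic: you spell out the boundary case (some $x_i=t$) and the $t$-independence via explicit test points and a subsequence argument, whereas the paper dispatches both with a brief ``we may perturb'' remark; your elaboration is sound and fills in exactly the details the paper leaves implicit.
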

\begin{proof}
Recall that a selector is a \textit{continuous} function. Let $\vec{\boldsymbol{\varepsilon}}=(\varepsilon_1, \ldots, \varepsilon_n)$ be any list of the symbols $\pm 1$ and let $t \in \mathbb{R}$. Define
$$
\Omega_t(\vec{\boldsymbol{\varepsilon}})=\big\{(x_1, \ldots, x_n): \text{$x_j >t$ if $\varepsilon_j=+1$; and $x_j \le t$ if $\varepsilon_j=-1$, for $j=1, 2, \ldots, n$}\}.
$$
For each $t$, the collection of the $2^n$ subsets of the form $\Omega_t(\vec{\boldsymbol{\varepsilon}})$ constitutes  a partition of $\mathbb{R}^n$.

For given $t$ and given $\vec{\boldsymbol{\varepsilon}}$, we have that $H\big(\Omega_t(\vec{\boldsymbol{\varepsilon}})\big) \subset (t, +\infty)$ or $H\big(\Omega_t(\vec{\boldsymbol{\varepsilon}})\big) \subset (-\infty, t]$. (Since~$H$ is a selector, this is clearly so  if all the $\varepsilon_j$ are equal.) Assume that this is not the case, and that for $\mathbf{x}=(x_1, \ldots, x_n), \mathbf{y}=(y_1, \ldots, y_n) \in \Omega_t(\vec{\boldsymbol{\varepsilon}})$ and $H(\mathbf{x}) >t$ while $H(\mathbf{y}) \le t$. Using both that $H$ is continuous and satisfies the selecting property, we may perturb both~$\mathbf{x}$ and~$\mathbf{y}$ and assume that~$\mathbf{x}$ and~$ \mathbf{y}$ are in the topological interior of $\Omega_t(\vec{\boldsymbol{\varepsilon}})$ and that $H(\mathbf{x}) >t$ while $H(\mathbf{y}) < t$ (strict inequality now). Continuity of  $H$ would give the existence of $\mathbf{z}$ in  the  interior of $\Omega_t(\vec{\boldsymbol{\varepsilon}})$, with $H(\mathbf{z})=t$, but this is impossible since $H$ is a selector.

Reasoning as above, for  $\vec{\boldsymbol{\varepsilon}}$ fixed, if for a single value of $t$ we have $H\big(\Omega_t(\vec{\boldsymbol{\varepsilon}})\big) \subset (-\infty, t]$, then  this is the case for every $t$. Define $a_k$ as the number of sets $\Omega_t(\vec{\boldsymbol{\varepsilon}})$ where $\vec{\boldsymbol{\varepsilon}}$ has exactly~$k$ coordinates $-1$ and $H\big(\Omega_t(\vec{\boldsymbol{\varepsilon}})\big) \subset (-\infty, t]$, for $k=0,1, \ldots, n$. Observe that $a_k$ does not depend on $t$.

Finally, for any random variable $X$ we have
$$
\P\big(\mathbf{H}(X)\le t\big)=\sum_{k=0}^n a_k F_X(t)^k (1-F_X(t))^{n-k}=h(F_X(t))
$$
where $h$ is the polynomial $h(x)=\sum_{k=0}^n a_k x^k (1-x)^{n-k}$. We conclude that $H$ is  a conservative statistic.
\end{proof}

\textit{There are conservative statistics which are not continuous.} Take a disk $D$ in contained in $\{x>y\} \subset\mathbb{R}^2$, and let $\hat{D}$ be its symmetric image with respect to the line $y=x$. Denote $C=D\cup \hat{D}$ and define $H(x,y)$ by
$$
H(x,y)=\begin{cases}
y, & \text{if} \quad (x,y) \in C,\\
x, & \text{otherwise.}
\end{cases}
$$
This function $H$ satisfies the selecting property and it is not continuous. Let $X, Y$ be independent and identically distributed. Now, for each $t \in \mathbb{R}$ and by symmetry,
$$\begin{aligned}
\P(H(X,Y)\le t)&=\P(X \le t; (X,Y) \notin C)+\P(Y \le t; (X,Y) \in C)\\&=\P(X \le t; (X,Y) \notin C)+\P(X \le t; (X,Y) \in C)=P(X\le t)\,,
\end{aligned}
$$
so that $H$ is conservative with module $h(x)=x$.

\begin{theorem}\label{theor:continuous conservative equal selector}
Any continuous conservative statistic is a selector, and conversely.
\end{theorem}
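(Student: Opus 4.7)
The plan is to observe that this theorem is essentially a packaging of the two preceding lemmas together with the definition of a selector; no new argument is required.

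For the forward direction (continuous conservative implies selector), I would start with $H\colon \mathbb{R}^n\to\mathbb{R}$ that is assumed continuous and conservative. By Lemma \ref{lemma:conservative_implies_selector}, $H$ satisfies the selecting property: $H(x_1,\ldots,x_n)\in\{x_1,\ldots,x_n\}$ for every $(x_1,\ldots,x_n)\in\mathbb{R}^n$. Since continuity is part of our hypothesis, $H$ fits the definition of selector given at the beginning of the introduction, so $H$ is a selector.

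For the converse (selector implies continuous conservative), assume $H$ is a selector. By definition a selector is continuous, and by Lemma \ref{lemma:selectorcontinuousimpliesconservative}, any selector is a conservative statistic. Hence $H$ is a continuous conservative statistic, as claimed.

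The only potential subtlety (and thus the one step I would double-check before writing the proof) is to make sure Lemma \ref{lemma:conservative_implies_selector} really supplies the pointwise selecting property at \emph{every} $(x_1,\ldots,x_n)\in\mathbb{R}^n$, and not merely almost everywhere; but that lemma is proved by taking $X$ uniform on $\{x_1,\ldots,x_n\}$ and analyzing the jumps of $h(F_X)$, which gives the conclusion for the given fixed point $(x_1,\ldots,x_n)$. So there is no gap, and the proof of the theorem is just two sentences that cite the two lemmas, one for each direction.
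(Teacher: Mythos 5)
Your proposal is correct and matches the paper's own proof, which simply cites Lemmas \ref{lemma:conservative_implies_selector} and \ref{lemma:selectorcontinuousimpliesconservative} for the two directions. Your extra check that Lemma \ref{lemma:conservative_implies_selector} yields the selecting property pointwise (not merely almost everywhere) is a sound observation, but the paper treats it as immediate.
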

\begin{proof}
This is a consequence of Lemmas \ref{lemma:conservative_implies_selector} and \ref{lemma:selectorcontinuousimpliesconservative}.
\end{proof}

Among the selectors, the order statistics may be characterized as follows.

\begin{lemma}
A {\upshape symmetric} selector $H$ is an order statistic, and conversely..
\end{lemma}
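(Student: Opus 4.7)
The converse direction is straightforward: any order statistic $H_{r:n}$ depends only on the sorted list of its arguments, hence is symmetric, and by construction it selects one of the inputs, so it is a selector.

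For the forward direction, my plan is to invoke Theorem \ref{theor:continuous conservative equal selectors equal sperner}, which identifies any selector with a Sperner statistic $H_\mathcal{S}$ for some Sperner family $\mathcal{S}=\{A_1,\dots,A_k\}$. The first thing I would verify is that $\mathcal{S}$ is \emph{uniquely} determined by $H$: the restriction of $H_\mathcal{S}$ to the Boolean cube $\mathbb{B}^n$ satisfies $H_\mathcal{S}(B)=1$ if and only if $\operatorname{supp}(B)\supseteq A_i$ for some $i$, so the upset generated by $\mathcal{S}$ is recovered from $H|_{\mathbb{B}^n}$, and $\mathcal{S}$ is then the family of its minimal elements (as noted in the remark on Sperner and arbitrary families).

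Next, I translate symmetry of $H$ into symmetry of $\mathcal{S}$. For any permutation $\sigma$ of $\{1,\dots,n\}$, a direct computation gives
$$
H_\mathcal{S}(x_{\sigma(1)},\dots,x_{\sigma(n)})=\max_i \min_{j\in A_i} x_{\sigma(j)}=\max_i \min_{j\in \sigma^{-1}(A_i)} x_j = H_{\sigma^{-1}(\mathcal{S})}(x_1,\dots,x_n).
$$
Symmetry of $H$ thus forces $H_{\sigma^{-1}(\mathcal{S})}=H_\mathcal{S}$, and by the uniqueness established above, $\sigma(\mathcal{S})=\mathcal{S}$ for every $\sigma$.

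The final step is a small combinatorial observation on permutation-invariant Sperner families: if $\mathcal{S}$ is invariant under the full symmetric group, then for any $A\in\mathcal{S}$ with $|A|=r$, every $r$-subset of $\{1,\dots,n\}$ lies in $\mathcal{S}$. If $\mathcal{S}$ had another member $A'$ with $|A'|=r'>r$, then by the same invariance all $r'$-subsets would belong to $\mathcal{S}$, and such an $r'$-subset contains some $r$-subset, contradicting the Sperner property. Therefore all members of $\mathcal{S}$ have a common size $r$ and $\mathcal{S}=\binom{\{1,\dots,n\}}{r}$. As observed right after the definition of Sperner statistic, this yields $H=H_\mathcal{S}=H_{n-r+1:n}$, an order statistic. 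The only place requiring some care is the uniqueness of $\mathcal{S}$, but this follows cleanly from the Boolean-cube description above, so I do not anticipate a substantive obstacle.
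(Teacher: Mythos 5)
Your proof is correct, but it takes a genuinely different route from the paper's. The paper argues directly from continuity: since $H$ is symmetric, it is determined by its restriction to the chamber $\{x_1<\cdots<x_n\}$, and on that connected open set a continuous selector must pick a fixed coordinate index $r$, which gives $H=H_{r:n}$ — a two-sentence argument that is self-contained at the point where the lemma appears. You instead route through Theorem~\ref{theor:selectors equal sperner}/\ref{theor:continuous conservative equal selectors equal sperner} (selectors are Sperner statistics), which appears later in the paper, and reduce the claim to the clean combinatorial observation that a permutation-invariant Sperner family must be $\binom{\{1,\dots,n\}}{r}$ for some $r$; this is also correct and arguably illuminates the structure better, though it leans on heavier machinery than the lemma strictly needs. (The intermediate steps are sound: the uniqueness of $\mathcal{S}$ from the upset of ones on the Boolean cube, the translation of symmetry into group-invariance of $\mathcal{S}$, and the single-size conclusion. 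One small slip: in your permutation computation $\min_{j\in A_i} x_{\sigma(j)}=\min_{m\in\sigma(A_i)} x_m$, so the right-hand side should read $H_{\sigma(\mathcal{S})}$ rather than $H_{\sigma^{-1}(\mathcal{S})}$; this does not affect the conclusion since you quantify over all $\sigma$.) Be aware that because your argument calls on a result proved later, as written it would have to be moved after Theorem~\ref{theor:selectors equal sperner} in the text.
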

By \textit{symmetric} we mean that the value of $H$ is unchanged if the coordinates are reordered.

\begin{proof}
Since $H$ is symmetric, $H$ is determined by its restriction to the set $\{(x_1, \ldots, x_n): x_1 \le \cdots \le x_n\}$, and, in fact, since $H$ is continuous, $H$ is determined by its restriction to $\{(x_1, \ldots, x_n): x_1 < \cdots < x_n\}$. But a selector on $\{(x_1, \ldots, x_n): x_1 < \cdots < x_n\}$ must select the same coordinate for all points.
\end{proof}


\begin{remark}
{\upshape The only $C^1$ selectors $H$ are the projections: constantly selecting a fixed  coordinate. This is so because at any point $(x_1, x_2, \ldots, x_n)$ with no repeated coordinates,  the gradient of $H$ has to be one of the vectors in the standard basis.}
\end{remark}

\subsection{Selectors and Sperner statistics}\label{subsection:selectors and Sperner}

In this section we show that every  selector is a Sperner statistic; since Sperner statistics are obviously  selectors, the two notions coincide.

We start by pointing out two further  properties of selectors. Once we have shown that selectors are Sperner statistics, those two properties f selectors will be obvious, but they are instrumental (in our approach) for showing that the two notions coincide.

A function $G: \mathbb{R}^n \to \mathbb{R}$ is called \textit{monotone} if
$$
G(x_1, x_2, \ldots, x_n) \le G(y_1, y_2, \ldots, y_n)
$$
whenever $x_i \le y_i$, for $i=1, 2, \ldots, n$.

\begin{lemma}\label{lemma:continuous_selector_monotone}
Any selector $H$ is monotone.
\end{lemma}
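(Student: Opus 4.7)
The plan is to reduce monotonicity on $\mathbb{R}^n$ to a one-variable statement and then exploit continuity together with the finiteness of the ``available choices'' in the selecting property.

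First, I would observe that it suffices to show that, for each index $k$ and each fixed choice of the remaining coordinates $(x_i)_{i \ne k}$, the one-variable function
$$
\phi(t)=H(x_1,\dots,x_{k-1},t,x_{k+1},\dots,x_n)
$$
is non-decreasing in $t$. Indeed, if $\mathbf{x}\le \mathbf{y}$ coordinatewise, one passes from $\mathbf{x}$ to $\mathbf{y}$ by raising one coordinate at a time, so $H(\mathbf{x}) \le H(\mathbf{y})$ follows from $n$ applications of the 1D monotonicity. Continuity of $H$ makes $\phi$ continuous, and the selecting property yields $\phi(t) \in A \cup \{t\}$, where $A = \{x_i : i \ne k\}$ is a fixed finite set.

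Next, I would prove the following dichotomy on each connected component $I$ of $\mathbb{R} \setminus A$: on $I$, $\phi$ is either the identity or a constant value $a \in A$. To see this, write $I = S_{\mathrm{id}} \cup \bigcup_{a\in A} S_a$, where $S_{\mathrm{id}} = \{t \in I : \phi(t) = t\}$ and $S_a = \{t \in I : \phi(t) = a\}$. Each of these sets is open in $I$: if $\phi(t_0) = a$ with $t_0 \in I$, then by continuity $\phi(t)$ lies near $a$ for $t$ near $t_0$; but $\phi(t) \in A \cup \{t\}$, the points of $A$ are mutually isolated, and $t \in I$ stays bounded away from $A$, so the only way $\phi(t)$ can be close to $a$ is $\phi(t) = a$. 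A similar argument handles $S_{\mathrm{id}}$. Connectedness of $I$ then forces exactly one of these sets to equal $I$.

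Finally, I would assemble the pieces using continuity at the boundary points. Adjacent components are separated by some $a \in A$, and continuity of $\phi$ matches the left and right limits at $a$. A short case check on the four possible pairings for the two components (id/id, id/const, const/id, const/const) shows that in every case $\phi$ is non-decreasing across the boundary; combined with the trivial non-decrease inside each component (identity or constant), this yields that $\phi$ is non-decreasing on all of $\mathbb{R}$. The main conceptual step is the open-and-closed dichotomy on each component; the boundary matching is routine once that structural fact is in hand.
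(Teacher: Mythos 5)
Your argument is correct and takes essentially the same route as the paper's: both reduce to the one-variable section $\phi$, observe that its graph is confined to a finite union of horizontal lines together with the diagonal, and invoke continuity plus connectedness to force monotonicity. The paper states the continuity-and-connectedness step tersely and then records the resulting classification of $\phi$ into five types (a constant, the identity, a $\max$, a $\min$, or a $\max$-$\min$); your open/closed dichotomy on the components of $\mathbb{R}\setminus A$ and the boundary case check supply exactly the details the paper leaves implicit.
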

\begin{proof}
It is enough to show that for any given $(x_2, x_3, \ldots, x_n)$ the function
$$
x \in \mathbb{R} \mapsto u=g(x)=H(x, x_2, \ldots, x_n)\, ,
$$
is increasing. Since $H$ is a selector, the graph of $g$ in the  $(x,u)$ plane is contained in the union of the horizontal lines $\{u=x_2\}, \{u=x_3\}, \ldots, \{u=x_n$\} and the line $\{u=x\}$, and since $g$ is a continuous function, we conclude, as desired,  that $g$ is non-decreasing. In fact~$g$ has to be one of the following five types of functions: $g(x)=x_k$, for some $x_k$ and for all~$x \in \mathbb{R}$; or $g(x)=x$ for all~$x \in \mathbb{R}$; or $g(x)=\max(x,x_j)$, for some $x_j$ and for all~$x \in \mathbb{R}$; or $g(x)=\min(x,x_k)$, for some $x_k$ and for all $x \in \mathbb{R}$; or, finally, $g(x)=\max(\min(x,x_j),x_k)$ for some $x_k < x_j$ and for all $x \in \mathbb{R}$.
\end{proof}

There are \textit{monotone functions which satisfy the selecting property and  which are not continuous} and, even further,  which are \textit{not conservative statistics}. For instance, let \mbox{$H(x,y)=y$}  if $y \ge |x|$, and $H(x,y)=x$ otherwise. Clearly, $H$ is monotone and satisfies the selecting property. Let us assume that $H$ is a conservative statistic with module $h$. Consider $X,Y$  independent Bernoulli variables with parameter $1-p \in (0,1)$; then $H(X,Y)$ is Bernoulli with parameter $1- p^2$, and consequently, $h(t)=t^2$, for any $t \in (0,1)$ (see Lemma \ref{expressionmodulebernoulli}). Consider now $X,Y$ independent and uniformly distributed in $\{-1, 0, +1\}$; now $\P(H(X,Y)\le -1)=2/9$ and $\P(X\le -1)=1/3$ which would imply $h(1/3)=2/9$, instead of $h(1/3)=1/9$.

\begin{lemma}\label{lemma:continuous_selectors_homogeneity}
Selectors $H$ are positively homogeneous of degree $1${\upshape:} for $\lambda >0$ and any $x_1, \ldots, x_n \in \mathbb{R}$,
$$
H(\lambda x_1, \ldots, \lambda x_n)=\lambda \,H(x_1, \ldots, x_n)\, .
$$
More generally, if $f$ is an increasing homeomorphism of\/ $\mathbb{R}$, then
$$
H\big(f(x_1), \ldots, f(x_n)\big)=f\big(H(x_1, \dots, x_n)\big)\, .
$$
\end{lemma}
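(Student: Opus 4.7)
The plan is to reduce the homogeneity statement to the more general one by taking $f(x)=\lambda x$, which is an increasing homeomorphism of $\mathbb{R}$ when $\lambda>0$; so I concentrate on the general identity $H(f(x_1),\ldots,f(x_n))=f(H(x_1,\ldots,x_n))$.

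The key idea is to work first on the open cells where all coordinates are pairwise distinct. For each permutation $\sigma$ of $\{1,\ldots,n\}$, set
$$
U_\sigma=\{(x_1,\ldots,x_n)\in\mathbb{R}^n : x_{\sigma(1)}<x_{\sigma(2)}<\cdots<x_{\sigma(n)}\}.
$$
Each $U_\sigma$ is open and connected, and $\bigcup_\sigma U_\sigma$ is the dense open subset of $\mathbb{R}^n$ where all coordinates are distinct. On $U_\sigma$, define, for each $i\in\{1,\ldots,n\}$, the set $E_i=\{\mathbf{x}\in U_\sigma : H(\mathbf{x})=x_i\}$. The selecting property gives $U_\sigma=\bigcup_i E_i$; continuity of $H$ (and of the coordinate maps) makes each $E_i$ relatively closed in $U_\sigma$; and since the coordinates are pairwise distinct on $U_\sigma$, the $E_i$ are pairwise disjoint. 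By connectedness of $U_\sigma$, exactly one of the sets $E_i$ equals $U_\sigma$ and the others are empty. So there is an index $k=k(\sigma)$ such that $H(\mathbf{x})=x_{k(\sigma)}$ for every $\mathbf{x}\in U_\sigma$.

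Now I exploit that an increasing homeomorphism $f$ preserves the order of the coordinates: the map $\mathbf{x}\mapsto (f(x_1),\ldots,f(x_n))$ sends $U_\sigma$ into $U_\sigma$. Therefore, for every $\mathbf{x}\in U_\sigma$,
$$
H(f(x_1),\ldots,f(x_n))=f(x_{k(\sigma)})=f(H(x_1,\ldots,x_n)).
$$
This establishes the identity on the dense set $\bigcup_\sigma U_\sigma$. Both sides are continuous functions of $\mathbf{x}\in\mathbb{R}^n$ (the left side because $H$ and $f$ are continuous, the right side likewise), so the identity extends to all of $\mathbb{R}^n$. Specializing to $f(x)=\lambda x$ with $\lambda>0$ yields positive homogeneity of degree $1$.

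The only real obstacle is the constancy-on-$U_\sigma$ step; everything else is bookkeeping. That step, however, is immediate once one notices that on $U_\sigma$ the coordinate functions $x_i$ take values in mutually disjoint tracks, so the continuous selector $H$ cannot jump from one track to another without violating either continuity or the selecting property.
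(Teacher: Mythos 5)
Your proof is correct and follows essentially the same route as the paper's: the paper's one-line argument is precisely the observation that on each open cell $U_\sigma$ the selector restricts to a projection (a fact you establish via the clopen-partition-and-connectedness argument), after which both the homogeneity and the conjugation identity follow by density and continuity. You have simply supplied the details the paper leaves implicit.
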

\begin{proof}
This follows immediately from the fact that for any permutation $\sigma$ of $\{1,\dots,n\}$, a selector restricted to
$$
\big\{(x_1, x_2, \ldots, x_n) \in \mathbb{R}^n: x_{\sigma(1)} < x_{\sigma(2)}< \cdots < x_{\sigma(n)}\big\}
$$
is a projection.
\end{proof}

The following lemma shows that it is enough to consider selectors as Boolean functions.

\begin{lemma}\label{lemma:continuous_selectors_unit_cube}
If two selectors in $\mathbb{R}^n$ coincide on $\mathbb{B}^n=\{0,1\}^n$ then they coincide everywhere.
\end{lemma}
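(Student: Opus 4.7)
The strategy is to show that any selector $H$ is completely determined by a single integer $m(\sigma)\in\{1,\ldots,n\}$ attached to each permutation $\sigma$ of $\{1,\ldots,n\}$, and that this integer can be read off from the Boolean restriction $H|_{\mathbb{B}^n}$. Consider the open cone
$$U_\sigma=\{(x_1,\ldots,x_n)\in\mathbb{R}^n : x_{\sigma(1)}<\cdots<x_{\sigma(n)}\}.$$
As was used in the proof of Lemma~\ref{lemma:continuous_selectors_homogeneity}, on the connected open set $U_\sigma$ the selector $H$ must agree with a single projection; that is, there exists $m(\sigma)\in\{1,\ldots,n\}$ with $H(x)=x_{\sigma(m(\sigma))}$ for all $x\in U_\sigma$ (the sets $\{H(x)=x_{\sigma(k)}\}$, for $k=1,\ldots,n$, are closed in $U_\sigma$, pairwise disjoint there since coordinates are distinct, and cover $U_\sigma$, which is connected). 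Since $\bigcup_\sigma U_\sigma$ is dense in $\mathbb{R}^n$ (its complement being the finite union of the hyperplanes $\{x_i=x_j\}$), two continuous selectors that agree on every $U_\sigma$ agree everywhere, and the problem reduces to recovering the integer $m(\sigma)$ from $H|_{\mathbb{B}^n}$.

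Fix a permutation $\sigma$. For each $j\in\{0,1,\ldots,n\}$, define the Boolean point $b^{\sigma,j}\in\mathbb{B}^n$ by
$$b^{\sigma,j}_{\sigma(i)}=0 \text{ for } i\le j, \qquad b^{\sigma,j}_{\sigma(i)}=1 \text{ for } i>j.$$
The sequence $x^{(k)}\in U_\sigma$ defined by $x^{(k)}_{\sigma(i)}=i/k$ for $i\le j$ and $x^{(k)}_{\sigma(i)}=1+i/k$ for $i>j$ lies in $U_\sigma$ and converges to $b^{\sigma,j}$. By continuity of $H$,
$$H(b^{\sigma,j})=\lim_{k\to\infty}H(x^{(k)})=\lim_{k\to\infty}x^{(k)}_{\sigma(m(\sigma))},$$
and this limit equals $0$ when $m(\sigma)\le j$ and $1$ when $m(\sigma)>j$. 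Consequently
$$m(\sigma)=\min\{j\in\{0,1,\ldots,n\} : H(b^{\sigma,j})=0\},$$
an expression that depends only on the restriction of $H$ to $\mathbb{B}^n$ (the minimum is well defined since $b^{\sigma,n}$ is the all-zero vector and the selecting property forces $H$ to vanish there).

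If two selectors $H_1$ and $H_2$ coincide on $\mathbb{B}^n$, the formula above yields $m_1(\sigma)=m_2(\sigma)$ for every $\sigma$, so $H_1\equiv H_2$ on each $U_\sigma$, and by density of $\bigcup_\sigma U_\sigma$ together with continuity, throughout $\mathbb{R}^n$. The one delicate step is the limit argument that connects the isolated Boolean points (which lie outside every open cone $U_\sigma$) to the constant-projection behaviour of $H$ on those cones; the explicit approximating sequence $x^{(k)}$, whose $\sigma$-ordering is manifest by construction, handles this transparently.
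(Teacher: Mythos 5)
Your proof is correct, and it takes a genuinely different and arguably cleaner route than the paper's. The paper first uses the homogeneity lemma (composition with $f(x)=2ax-a$) to reduce the whole question to $[0,1]^n$, and then runs an induction on the dimension: it peels off the faces $x_{n+1}\equiv 1$, $x_{n+1}\equiv 0$, etc., checks in each case whether the restriction to a face is itself a lower-dimensional selector or is identically a boundary value, and then propagates from $\partial[0,1]^{n+1}$ to the interior by homogeneity, with an explicit $n=2$ base case. Your argument instead starts from the observation (already implicit in the proof of Lemma~\ref{lemma:continuous_selectors_homogeneity}) that a selector is a fixed projection $x\mapsto x_{\sigma(m(\sigma))}$ on each open order-cone $U_\sigma$, and then recovers the index $m(\sigma)$ directly from the Boolean restriction via a limit along an explicit sequence in $U_\sigma$ converging to the staircase Boolean point $b^{\sigma,j}$; density of $\bigcup_\sigma U_\sigma$ plus continuity then finishes the job. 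What this buys you: no induction, no case analysis on faces, and a transparent combinatorial encoding of a selector as a map $\sigma\mapsto m(\sigma)$ that is manifestly read off from $H|_{\mathbb{B}^n}$. What the paper's route buys instead is that it stays entirely within the monotonicity/homogeneity framework it has already set up and never needs the explicit approximating sequences. One small but genuine advantage of your version is that it makes visible exactly \emph{which} Boolean points determine the behaviour on $U_\sigma$ (the chain $b^{\sigma,0},\ldots,b^{\sigma,n}$), which foreshadows the upset/Sperner-family picture developed afterwards.
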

\begin{proof}
Let $H$ and $J$ be two selectors in $\mathbb{R}^n$. Assume that $H$ and $J$ coincide on $[0,1]^n$. For  any $a >0$, using $f(x)=2ax-a$ in Lemma \ref{lemma:continuous_selectors_homogeneity},  we see that $H,J$ coincide on
$[-a,a]^n$, and consequently $H\equiv J$.

Next, we show that if $H,J$ coincide on $\{0,1\}^n$, then they coincide on $[0,1]^n$. To verify this we now consider selectors defined only on $[0,1]^n$ and not in the whole of $\mathbb{R}^n$. Those selectors satisfy Lemmas \ref{lemma:continuous_selector_monotone} and \ref{lemma:continuous_selectors_homogeneity}.

We will prove by induction that a selector $\widetilde{H}$ defined on $[0,1]^n$ is determined by its values on $\{0,1\}^n$. Assume that this is true for dimension $n$. Let $\widetilde{H}$ be a selector defined on~$[0,1]^{n+1}$. Consider the restriction $G$ of $\widetilde{H}$  to the face of the boundary of $[0,1]^{n+1}$ given by $x_{n+1}\equiv1$,~i.e.,
$$(x_1, x_2, \ldots, x_n) \mapsto G(x_1, x_2, \ldots, x_n) =\widetilde{H}(x_1, x_2, \ldots, x_n, 1)$$

There are two possibilities. First, if for some point $(\bar{x}_1, \ldots, \bar{x}_n) \in (0,1)^n$ we have $G(\bar{x}_1, \ldots, \bar{x}_n)=1$, then since $\widetilde{H}$ is a selector, $\widetilde{H}(x_1, \ldots, x_n, 1)=1$ for each $({x_1}, \ldots, {x_n}) \in (0,1)^n$. Consequently,
\begin{equation}\label{equation:widetilde_on_corners}
\widetilde{H}(\epsilon_1, \ldots, \epsilon_n,1)=1 \quad \text{for any} \quad (\epsilon_1, \ldots, \epsilon_n) \in \{0,1\}^n\,.
\end{equation}
Conversely, if \eqref{equation:widetilde_on_corners} happens, then just by monotonicity
\begin{equation}\label{equation:widetilde_inside}
\widetilde{H}(x_1, \ldots, x_n,1)=1, \quad \text{for any} \quad (x_1, \ldots, x_n) \in [0,1]^n\,.
\end{equation}

The other possibility is that $G(x_1,  \ldots, x_n) \neq 1$, for each $({x_1}, \ldots, {x_n}) \in (0,1)^n$. Then $G$ is a selector in $[0,1]^n$, and by induction $G$ is determined by its values at the corners $\{0,1\}^n$.

Arguing similarly with the other faces of $\partial [0,1]^{n+1}$ we conclude that the restriction of $\widetilde{H}$ to $\partial [0,1]^{n+1}$ is determined by its values on $\{0,1\}^{n+1}$. By homogeneity (Lemma \ref{lemma:continuous_selectors_homogeneity}) we conclude that $\widetilde{H}$ is determined in the whole of $[0,1]^{n+1}$ by its values on $\{0,1\}^{n+1}$, as desired.

The case $n=2$, to start the induction argument, is quite direct. Let $H$ be a selector on $[0,1]^2$. There are four cases to consider, given by the values of $H$ on the corners $a=(0,1)$ and on $b=(1,0)$. If $H(a)=0,H(b)=0$, then $H(x,0)=0$, $H(0,y)=0$, $H(x,1)=x$, $H(1,y)=y$, for any $x,y \in [0,1]$. By homogeneity, $H(x,y)=\min(x,y)$, for any $(x,y)\in [0,1]^2$. Similarly, $H(a)=1,H(b)=1$, implies that $H(x,y)=\max(x,y)$; while $H(a)=1,H(b)=0$, implies that $H(x,y)=y$, and $H(a)=0,H(b)=1$, implies that $H(x,y)=x$, again, for $(x,y) \in [0,1]^2$.
\end{proof}

We shall take advantage now of a standard fact concerning Boolean function in $\{0,1\}^n$, to wit, any \textit{monotone} Boolean function $F$ in $\{0,1\}^n$ may be represented as
\begin{equation}\label{equation:monotone_representation}
F=\max\Big(\min\limits_{A_1}, \min\limits_{A_2},\ldots, \min\limits_{A_k},\Big)\, ,
\end{equation}
where $\mathcal{S}=\{A_1, A_2, \ldots, A_k\}$  is a Sperner family in $\{1,\dots,n\}$. To see why this is true, with the usual identification of subsets of $\{1,\dots,n\}$ with elements of $\{0,1\}^n$, the~$A$ comprising the Sperner family  are precisely the minimal subsets $A$ under the action of $F$; minimal  meaning that $F(A)=1$, while for any proper subset $B \varsubsetneq A$ one has $F(B)=0$.

Let $H$ be any selector. By Lemma \ref{lemma:continuous_selector_monotone} we know that $H$ is monotone. The restriction~$F$ of~$H$ to $\{0,1\}^n$ is a monotone Boolean function. Let $\mathcal{S}$ be the Sperner family of the representation \eqref{equation:monotone_representation}, and consider the selector $H_{\mathcal{S}}$. These two selectors $H$ and $H_{\mathcal{S}}$ coincide  on~$\{0,1\}^n$, and so by Lemma \ref{lemma:continuous_selectors_unit_cube} we conclude that $H$ and $H_{\mathcal{S}}$ coincide. We have proved:

\begin{theorem}\label{theor:selectors equal sperner}
Any selector is a Sperner statistic, and conversely.
\end{theorem}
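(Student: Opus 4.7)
The converse direction is immediate: a Sperner statistic $H_{\mathcal{S}} = \max(\min_{A_1}, \ldots, \min_{A_k})$ is continuous (as a composition of continuous max/min operators with coordinate projections) and satisfies the selecting property (since each $\min_{A_i}$ returns one of the $x_j$, and then $\max$ picks one of these). So the nontrivial content is the forward implication.

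For that direction, my plan is to exploit the preceding lemmas essentially verbatim. Given a selector $H\colon \mathbb{R}^n\to\mathbb{R}$, Lemma \ref{lemma:continuous_selector_monotone} tells us $H$ is monotone, so its restriction $F := H\big|_{\{0,1\}^n}$ is a monotone Boolean function $\{0,1\}^n\to\{0,1\}$ (the image is in $\{0,1\}$ by the selecting property applied to Boolean inputs). Note that $F$ is not identically zero: the selecting property forces $F(1,1,\ldots,1)=1$. (If $F$ were identically zero, the representation below would be empty; this doesn't occur.)

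Next, I invoke the standard disjunctive normal form representation for nonconstant monotone Boolean functions: letting $\mathcal{S}=\{A_1,\ldots,A_k\}$ be the collection of minimal subsets $A\subseteq\{1,\ldots,n\}$ with $F(\mathbf{1}_A)=1$, this $\mathcal{S}$ is a Sperner family (no $A_i$ can contain another, by minimality) and $F = \max(\min_{A_1},\ldots,\min_{A_k})$ as Boolean functions on $\{0,1\}^n$. This is the content of \eqref{equation:monotone_representation}, taken as given. Thus $H$ and the Sperner statistic $H_{\mathcal{S}}$ agree on $\{0,1\}^n$.

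Finally, Lemma \ref{lemma:continuous_selectors_unit_cube} asserts that two selectors in $\mathbb{R}^n$ that agree on $\{0,1\}^n$ agree everywhere, so $H\equiv H_{\mathcal{S}}$, finishing the proof. The main conceptual step is really the DNF representation for monotone Boolean functions together with the rigidity Lemma \ref{lemma:continuous_selectors_unit_cube}; with these already established in the paper, the theorem itself collapses to a short chain of applications, and I do not anticipate any genuine obstacle. The only subtle point worth flagging is checking that the degenerate case where $F$ would be the zero function is ruled out by the selecting property at the vertex $(1,\ldots,1)$, so that $\mathcal{S}$ is a bona fide (nonempty) Sperner family.
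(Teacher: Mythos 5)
Your proof is correct and follows essentially the same route as the paper: monotonicity of selectors (Lemma \ref{lemma:continuous_selector_monotone}), the DNF representation \eqref{equation:monotone_representation} of monotone Boolean functions by a Sperner family, and the rigidity Lemma \ref{lemma:continuous_selectors_unit_cube} to lift agreement on $\{0,1\}^n$ to all of $\mathbb{R}^n$. The small extra observation that $F(1,\ldots,1)=1$ rules out the empty Sperner family is a worthwhile sanity check, though the paper leaves it implicit.
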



Combining Theorems \ref{theor:continuous conservative equal selector} and \ref{theor:selectors equal sperner}, we have:
\begin{theorem}\label{theor:continuous conservative equal selectors equal sperner}
Let $H\colon \mathbb{R}^n\to\mathbb{R}$. The following are equivalent:
\begin{itemize}\itemsep=0pt
\item[{\rm i)}] $H$ is a continuous conservative statistic;

\item[{\rm ii)}] $H$ is a selector;

\item[{\rm iii)}] $H$ is a Sperner statistic.
\end{itemize}
\end{theorem}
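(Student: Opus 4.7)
The proof is essentially a bookkeeping exercise that concatenates the two equivalences already established. My plan is to observe that Theorem \ref{theor:continuous conservative equal selector} gives the equivalence (i) $\Longleftrightarrow$ (ii), while Theorem \ref{theor:selectors equal sperner} gives (ii) $\Longleftrightarrow$ (iii). Chaining these two biconditionals yields the three-way equivalence, so there is nothing new to verify.

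In a bit more detail, I would first recall that a Sperner statistic $H_\mathcal{S}$ as defined in \eqref{Sperner statistic} is manifestly continuous (it is built from $\max$ and $\min$ of coordinate projections, all of which are continuous) and manifestly satisfies the selecting property (its value is always one of the coordinates $x_i$ with $i \in \bigcup_j A_j \subseteq \{1,\dots,n\}$). So (iii) $\Longrightarrow$ (ii) is transparent. For (ii) $\Longrightarrow$ (iii), one invokes Theorem \ref{theor:selectors equal sperner} directly. For (ii) $\Longrightarrow$ (i) and (i) $\Longrightarrow$ (ii), one invokes Theorem \ref{theor:continuous conservative equal selector}, which in turn is the combination of Lemma \ref{lemma:selectorcontinuousimpliesconservative} (selectors are conservative, where continuity is used crucially in the partition-of-$\mathbb{R}^n$ argument) and Lemma \ref{lemma:conservative_implies_selector} together with the added continuity hypothesis.

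The structure I would write is just: (i) $\Longrightarrow$ (ii) by Theorem \ref{theor:continuous conservative equal selector}; (ii) $\Longrightarrow$ (iii) by Theorem \ref{theor:selectors equal sperner}; and (iii) $\Longrightarrow$ (i) by combining the obvious continuity and selecting property of $H_\mathcal{S}$ together with the first lemma of Section \ref{section:sperner} (which shows that Sperner statistics are conservative, with module given by \eqref{formulamodulesperner}). There is no real obstacle here, since all the substantive work has already been done in the preceding two theorems; the main thing is just to make sure the logical cycle is closed and to be explicit that \emph{continuity} is used in (i) $\Longleftrightarrow$ (ii) (as emphasized by the counterexample with a disk $C \subset \mathbb{R}^2$ earlier in the section), so that one cannot drop it and still obtain the equivalence with (ii) or (iii).
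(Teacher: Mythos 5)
Your proposal is correct and follows the same route as the paper, which simply chains Theorem~\ref{theor:continuous conservative equal selector} (giving (i)~$\Leftrightarrow$~(ii)) with Theorem~\ref{theor:selectors equal sperner} (giving (ii)~$\Leftrightarrow$~(iii)). The extra observations you add --- that (iii)~$\Rightarrow$~(ii) is immediate from the $\max$/$\min$ form of $H_\mathcal{S}$, and that continuity is essential in (i)~$\Leftrightarrow$~(ii) --- are accurate but not needed beyond what the two cited theorems already supply.
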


\begin{remark}
{\upshape As a complement to Theorem \ref{theor:continuous conservative equal selectors equal sperner}, it would be interesting
to determine which (noncontinuous) statistics with the selecting property are conservative statistics; and also
to determine which noncontinuous conservative statistics have $h(t)=t$ as module.

Even further, it could be the case that conservative statistics are of just of two types: either selectors, or else statistics satisfying the selecting property  and obtained via a symmetrization procedure akin to the one described in the example following Lemma \ref{lemma:selectorcontinuousimpliesconservative}. Those of the second class have always the identity as module, while among the selectors only the projections have the identity as module.}
\end{remark}

\section{Sperner polynomials}\label{sec:sperner polynomials}
For the analysis of the iteration of Sperner statistics, it is most convenient to consider, instead of its module, the following (dual) polynomial associated to a Sperner statistic.

Let $\mathcal{S}=\{A_1,\dots, A_k\}$ be a Sperner family, and let $H_\mathcal{S}$ and $h_\mathcal{S}$ be its associated statistic and module, respectively. We have already seen that defining
$$
a_k= \#\{B\in \mathbb{B}^n : \text{$B$ has $k$ zeros and $H_\mathcal{S}(B)=0$}\},\quad\text{for $1 \le k\le  n$,}
$$
then
$$
h_\mathcal{S}(t)=\sum_{k=0}^n a_k\, t^k\, (1-t)^{n-k}\,.
$$

Define now
\begin{equation}
\label{def of bk}
b_k= \#\{B\in \mathbb{B}^n : \text{$B$ has $k$ ones and $H_\mathcal{S}(B)=1$}\}.
\end{equation}
Observe that ${n\choose k}-b_k=a_{n-k}$, and $b_0=0$, $b_n=1$. The \textbf{Sperner polynomial} of $H_\mathcal{S}$ is defined as
\begin{equation}\label{def poly g}
g_\mathcal{S}(x):=\sum_{k=0}^n b_k\, x^k\, (1-x)^{n-k}
\end{equation}
Observe that
\begin{equation}
\label{relation h and g}g_\mathcal{S}(x)=1-h_\mathcal{S}(1-x),
\end{equation}
so $0<g_\mathcal{S}(x)<1$ for all $x\in(0,1)$, by Lemma \ref{lemma:bounds zero one}.

Notice also that, for each $p\in(0,1)$,
\begin{equation}
\label{g as expectation}
g_\mathcal{S}(p)=\E_p(H_\mathcal{S}),
\end{equation}
where the expectation is taken with respect to the Bernoulli measure $\mu_p$.

\begin{remark}
{\upshape The Sperner polynomial of $H_\mathcal{S}$ is, in fact, the module of the dual selector
$$
\widehat{H}_\mathcal{S}(x_1,\dots, x_n)=1-H_\mathcal{S}(1-x_1,\dots, 1-x_n).
$$
Alternatively, the dual selector can be written as
$$
\widehat{H}_\mathcal{S}=\min\big(\max_{A_1}, \dots, \max_{A_k}\big),
$$
interchanging the roles of minima and maxima.

}
\end{remark}

\begin{remark}{\upshape The list of coefficients $(b_0,b_1,\dots, b_n)$ is sometimes called the profile of (the upset $\mathcal{U}$ associated to) $\mathcal{S}$.

For Sperner polynomials we have that $b_0=0$, $b_n=1$, and $0 \le b_k \le \binom{n}{k}$  for $k=0, 1, \ldots, n$. Besides,
the local LYM inequality (see, for instance, Chapter 3 of \cite{Bo}) yields that the sequence $b_k/{n\choose k}$ is increasing.

It would be interesting to determine which polynomials $g$ are Sperner polynomials. In other terms, to ``characterize'' the profile-polytope of upsets of Sperner families. See \cite{En}.
}
\end{remark}

We show now a useful recurrence relation for Sperner polynomials. It is really a restatement of Lemma~\ref{lemma:recursion} in terms of Sperner polynomials, with equality issues dealt with.
For convenience, only for this lemma, we include the constant functions $g\equiv 1$ and $g\equiv 0$ as (degenerate) Sperner polynomials.

\begin{lemma}
\label{lemma:recurrence for g}
For any Sperner polynomial $g$ of dimension $n$ and for any $t\in(0,1)$, the following recurrence holds:
\begin{equation}\label{eq:recurrence for g}
g(t)=t\, g_1(t)+(1-t)\, g_0(t),
\end{equation}
where $g_1$ and $g_0$ are Sperner polynomials with dimension less than $n$, and $g_1(t)\ge g_0(t)$ for all $t\in[0,1]$.

Moreover, if  $g_1(t^*)=1$ and $g_0(t^*)=0$ for some $t^*\in(0,1)$, then $g$ is the identity. If $g_1(t^*)=g_0(t^*)$ for some $t^*\in(0,1)$, then $g\equiv g_1\equiv g_0$.
\end{lemma}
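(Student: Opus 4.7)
The plan is to deduce the recurrence \eqref{eq:recurrence for g} from Lemma \ref{lemma:recursion} by means of the duality $g_\mathcal{S}(t)=1-h_\mathcal{S}(1-t)$. Fix an index $r\in\{1,\dots,n\}$. Substituting $s=1-t$ into $h_\mathcal{S}(s)=s\,h_{\mathcal{S}\vdash r}(s)+(1-s)\,h_{\mathcal{S}\setminus r}(s)$ and rearranging gives
$$g_\mathcal{S}(t)=t\bigl[1-h_{\mathcal{S}\setminus r}(1-t)\bigr]+(1-t)\bigl[1-h_{\mathcal{S}\vdash r}(1-t)\bigr],$$
which is \eqref{eq:recurrence for g} with $g_1:=1-h_{\mathcal{S}\setminus r}(1-\cdot)$ and $g_0:=1-h_{\mathcal{S}\vdash r}(1-\cdot)$. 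Whenever $\mathcal{S}\setminus r$ and $\mathcal{S}\vdash r$ are genuine Sperner families on $\{1,\dots,n\}\setminus\{r\}$, the $g_i$ are Sperner polynomials of dimension $n-1<n$; the degenerate conventions $h_{\mathcal{S}\setminus r}\equiv 0$ (when $\{r\}\in\mathcal{S}$) and $h_{\mathcal{S}\vdash r}\equiv 1$ (when $r\in\bigcap_j A_j$) translate under the duality to $g_1\equiv 1$ and $g_0\equiv 0$. (Equivalently, by \eqref{g as expectation}, $g_\mathcal{S}(t)=\P_t(H_\mathcal{S}=1)$ with i.i.d.\ Bernoulli$(t)$ coordinates, and the same identity follows by conditioning on the $r$-th coordinate.)

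The inequality $g_1(t)\ge g_0(t)$ on $[0,1]$ is then immediate from the monotonicity of $H_\mathcal{S}$ (Lemma \ref{lemma:continuous_selector_monotone}): pointwise, $H_\mathcal{S}(b_1,\dots,1,\dots,b_n)\ge H_\mathcal{S}(b_1,\dots,0,\dots,b_n)$, and taking expectation over the other $n-1$ coordinates (which are i.i.d.\ Bernoulli$(t)$) preserves the inequality.

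For the moreover clauses, suppose first that $g_1(t^*)=1$ and $g_0(t^*)=0$ for some $t^*\in(0,1)$. By Lemma \ref{lemma:bounds zero one} combined with \eqref{relation h and g}, a (non-degenerate) Sperner polynomial satisfies $0<g(t)<1$ on $(0,1)$; therefore $g_1$ and $g_0$ must be the degenerate constants $1$ and $0$, forcing $\{r\}\in\mathcal{S}$ and $r\in\bigcap_j A_j$ simultaneously. The antichain (Sperner) property then leaves only $\mathcal{S}=\{\{r\}\}$, so $H_\mathcal{S}$ is the projection onto the $r$-th coordinate and $g_\mathcal{S}(t)=t$. Next, suppose $g_1(t^*)=g_0(t^*)$ at some $t^*\in(0,1)$. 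Writing $d_k$ for the number of $(n-1)$-tuples $b\in\{0,1\}^{n-1}$ with exactly $k$ ones for which the $r$-th coordinate is pivotal (i.e., flipping it from $0$ to $1$ changes $H_\mathcal{S}$ from $0$ to $1$), we have
$$g_1(t)-g_0(t)=\sum_{k=0}^{n-1} d_k\, t^k(1-t)^{n-1-k},$$
with all $d_k\ge 0$. Since each summand is strictly positive on $(0,1)$, the vanishing at $t^*$ forces every $d_k=0$, whence $g_1\equiv g_0$; substituting into \eqref{eq:recurrence for g} yields $g_\mathcal{S}\equiv g_1\equiv g_0$.

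The main obstacle is the careful bookkeeping of the degenerate cases: the statement silently absorbs the conventions $h_{\mathcal{S}\setminus r}\equiv 0$ and $h_{\mathcal{S}\vdash r}\equiv 1$ into the framework of Sperner polynomials with values $1$ and $0$, and the first moreover clause rests on squeezing two apparently opposite constraints ($\{r\}\in\mathcal{S}$ and $r\in\bigcap_j A_j$) through the Sperner antichain property to conclude $\mathcal{S}=\{\{r\}\}$.
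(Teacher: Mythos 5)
Your proof is correct and follows essentially the same path as the paper, which in fact announces the lemma as ``really a restatement of Lemma~\ref{lemma:recursion} in terms of Sperner polynomials''; you make the duality $g_\mathcal{S}(t)=1-h_\mathcal{S}(1-t)$ explicit to transport the $h$-recurrence, whereas the paper simply re-runs the conditioning argument on the first coordinate (writing $H_1(x_2,\dots,x_n)=H(1,x_2,\dots,x_n)$ and $H_0=H(0,\cdot)$) to obtain the same $g_1,g_0$ directly, which it then reuses for the two ``moreover'' clauses. Your handling of the degenerate cases (forcing $\{r\}\in\mathcal{S}$ and $r\in\bigcap_j A_j$ simultaneously, hence $\mathcal{S}=\{\{r\}\}$) and of the second clause via the pivotal counts $d_k$ and positivity of $t^k(1-t)^{n-1-k}$ is a correct, slightly more computational rendering of the paper's observation that $\mu_{t^*}$ gives positive mass to all atoms so $\E_{t^*}(H_1)=\E_{t^*}(H_0)$ with $H_1\ge H_0$ forces $H_1\equiv H_0$.
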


\begin{proof}
Take independent Bernoulli random variables $I_1,\dots, I_n$ with success probability $t$. Conditioning on the value of $I_1$, for any $t\in(0,1)$,
\begin{align*}
g(t)&=\P(H(I_1,\dots, I_n)=1)
\\
&=t\, \P(H(I_1,\dots, I_n)=1|I_1=1)+(1-t)\,\P(H(I_1,\dots, I_n)=1|I_1=0)
\\
&=t\,g_1(t)+(1-t)\, g_0(t).
\end{align*}

Observe that, if for some $t^*\in(0,1)$, $g_1(t^*)=1$ and $g_0(t^*)=0$, then in fact $g_1(t)=1$ and $g_0(t)=0$ for all $t$, and $g$ is the identity.

Write
$$
H_1(x_2,\dots, x_n)=H(1,x_2,\dots, x_n)\quad\text{and}\quad H_0(x_2,\dots, x_n)=H(0,x_2,\dots, x_n).
$$
Both $H_1$ and $H_0$ are monotone Boolean functions. Observe that
$$
g_1(t)=\E_t(H_1)\quad\text{and}\quad g_0(t)=\E_t(H_0)
$$
(expectations in $n-1$ dimensions). Notice that $H_1\ge H_0$, by the monotonicity of $H$. Therefore, $g_1(t)\ge g_0(t)$ for all $t\in[0,1]$.
Now, if for some $t^*\in(0,1)$, $g_1(t^*)=g_0(t^*)$, then $H_1\equiv H_0$, since $\mu_t$ gives positive mass to all the atoms in $\mathbb{B}^n$. Consequently, $g_1\equiv g_0$.
    \end{proof}

The following example illustrates some alternative ways of calculating modules and Sperner polynomials.

\begin{example}
{\label{tree}\upshape For $n=3$, say that $\mathcal{S}=\{\{1,2\}, \{2,3\}\}$, so that
$$
H(x_1,x_2,x_3)=\max\big(\min(x_1,x_2), \min(x_2,x_3)\big).
$$
Following \eqref{formulamodulesperner}, we could write
$$
h(t)=1-2(1-t)^{2}+(1-t)^{3},
$$
so that
$$
g(t)=2t^2-t^3.
$$

Alternatively, observe that $H=1$ for $(1,1,0)$ and $(0,1,1)$, and also for $(1,1,1)$ (that is, $H=1$ in the upset associated to $\mathcal{S}$):
\begin{center}
\resizebox{10cm}{!}{\includegraphics{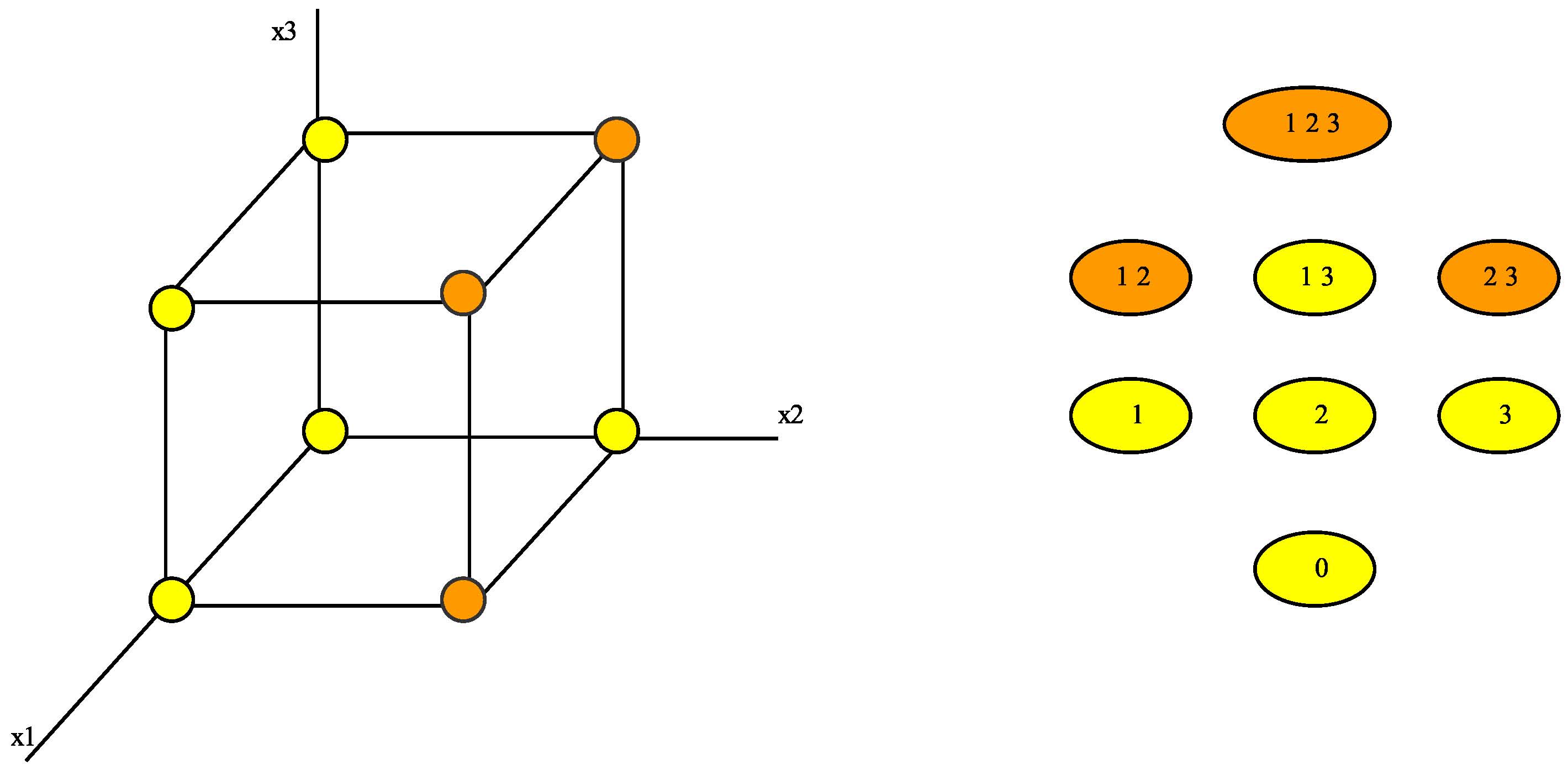}}
\end{center}
Then, using \eqref{def poly g}, as $b_3=1$ and $b_2=2$, we get
$
g(t)=2\, t^2\,(1-t)+t^3=2\, t^2-t^3.
$

With Lemma \ref{lemma:recurrence for g}, we could reinterpret this calculation in a binary tree, as follows: the leaves are labeled 1 (if $H=1$) or 0 otherwise. Then proceed backwards applying  \eqref{eq:recurrence for g}:
\begin{center}
\resizebox{10cm}{!}{\includegraphics{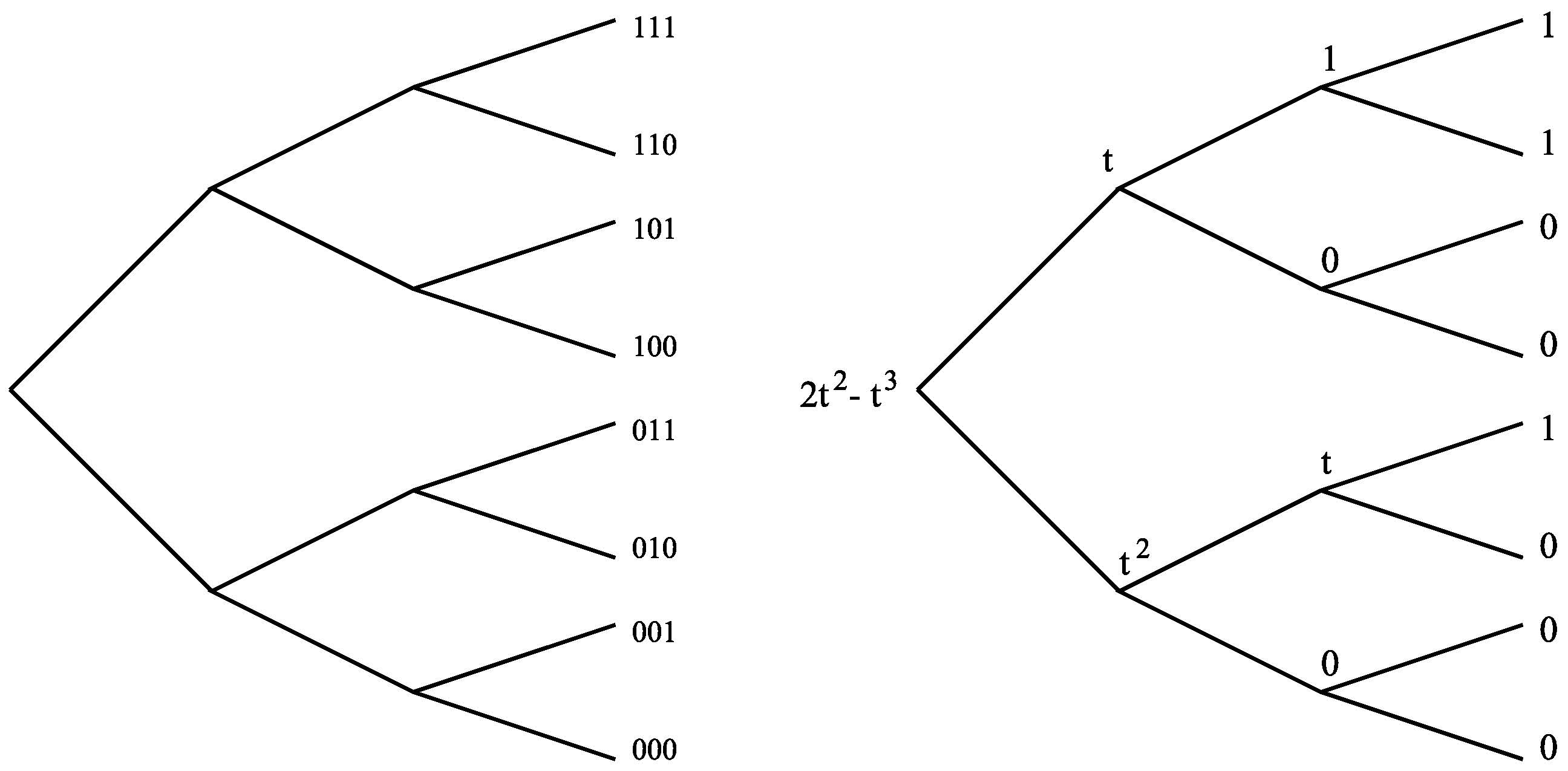}}
\end{center}
}\end{example}

Here is a bound on the derivative of a Sperner polynomial that shall be useful in the sequel.
\begin{lemma}
\label{isoperimetric inequality for g}
For any Sperner polynomial $g(t)$,
\begin{equation}
\label{g' ge than}
g'(t)\ge \frac{g(t)\, (1-g(t))}{t\,(1-t)} \quad\text{for $t\in(0,1)$.}
\end{equation}
If for some $t^*\in(0,1)$ there holds equality in \eqref{g' ge than}, then $g$ is the identity.
\end{lemma}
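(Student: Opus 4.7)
The plan is induction on the dimension $n$ of the Sperner polynomial, exploiting the recurrence in Lemma \ref{lemma:recurrence for g}. The base case $n=1$ is immediate: the only Sperner polynomial is $g(t)=t$, and $g'(t)=1=t(1-t)/[t(1-t)]$, so equality holds; the degenerate constants $g\equiv 0, g\equiv 1$ trivially satisfy $0\ge 0$.

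For the inductive step, write $g(t)=t\,g_1(t)+(1-t)\,g_0(t)$ with $g_1\ge g_0$ Sperner polynomials of dimension $<n$. Differentiating,
$$
g'(t)=\big(g_1(t)-g_0(t)\big)+t\,g_1'(t)+(1-t)\,g_0'(t).
$$
By the inductive hypothesis, $t\,g_1'(t)\ge g_1(1-g_1)/(1-t)$ and $(1-t)\,g_0'(t)\ge g_0(1-g_0)/t$. Multiplying \eqref{g' ge than} by $t(1-t)$, it suffices to verify the pointwise inequality
$$
t(1-t)(g_1-g_0)+t\,g_1(1-g_1)+(1-t)\,g_0(1-g_0)\ge g(1-g).
$$
Expanding $g(1-g)=[tg_1+(1-t)g_0]\,[t(1-g_1)+(1-t)(1-g_0)]$ and collecting terms, the difference between left and right sides simplifies to $t(1-t)(g_1-g_0)\bigl[1-(g_1-g_0)\bigr]$, which is nonnegative since $0\le g_0\le g_1\le 1$.

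For the equality case, suppose equality in \eqref{g' ge than} holds at some $t^*\in(0,1)$. Then the quantity $(g_1-g_0)(1-(g_1-g_0))$ must vanish at $t^*$, so either $g_1(t^*)=g_0(t^*)$, or else $g_1(t^*)=1$ and $g_0(t^*)=0$. In the second case, Lemma \ref{lemma:recurrence for g} immediately yields that $g$ is the identity. In the first case, Lemma \ref{lemma:recurrence for g} forces $g\equiv g_1\equiv g_0$; equality in the inductive bound at $t^*$ also propagates to $g_1$, and since $g$ (hence $g_1$) is a non-degenerate Sperner polynomial, induction on dimension gives that $g_1$, and therefore $g$, is the identity. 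The intermediate sub-cases where exactly one of $g_1,g_0$ is degenerate are ruled out by Lemma \ref{lemma:bounds zero one}: for instance, $g_1\equiv 1$ together with $g_0$ non-degenerate would give $(g_1-g_0)(1-(g_1-g_0))|_{t^*}=g_0(t^*)(1-g_0(t^*))>0$, contradicting equality.

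The main obstacle is the algebraic identity collapsing the error term to $t(1-t)(g_1-g_0)(1-(g_1-g_0))$; once one trusts this computation, the rest is bookkeeping. A secondary subtlety is carefully tracking the degenerate values $g_1\equiv 0,1$ or $g_0\equiv 0,1$ in the equality analysis, where Lemma \ref{lemma:bounds zero one} is essential to conclude that the only way to attain equality is via the clean cases handled by Lemma \ref{lemma:recurrence for g}.
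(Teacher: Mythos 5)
Your proof is correct and follows essentially the same approach as the paper's: induction on dimension via the recurrence of Lemma \ref{lemma:recurrence for g}, with the slack in the inequality collapsing to the nonnegative quantity $t(1-t)(g_1-g_0)\bigl[1-(g_1-g_0)\bigr]$ (the paper records the identical computation in the equivalent form $tg_1^2+(1-t)g_0^2-t(1-t)(g_1-g_0)\le\bigl(tg_1+(1-t)g_0\bigr)^2$, justified by $g_1-g_0\ge(g_1-g_0)^2$). The equality analysis is likewise parallel: the paper starts the induction at $n=2$, and in the case $g_1(t^*)=g_0(t^*)$ it simply notes that since $g\equiv g_1$ the equality at $t^*$ transfers verbatim to $g_1$ and then iterates, while your explicit handling of the degenerate sub-cases via Lemma \ref{lemma:bounds zero one} is a compatible elaboration of the same idea.
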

\begin{proof}
We prove the claim by induction (in the dimension of the Sperner polynomial).

Recall \eqref{eq:recurrence for g}. Notice that, for $t\in(0,1)$,
$$
g(t)=tg_1(t)+(1-t)\, g_0(t)\quad\text{and so}\quad g'(t)=t\, g_1'(t)+(1-t)\, g'_2(t)+(g_1(t)-g_0(t)),
$$
and by the induction hypothesis,
\begin{align*}
&g'(t)\ge \frac{1}{t\,(1-t)}\big[t\, g_1(t)\, (1-g_1(t))+(1-t)\, g_0(t)(1-g_0(t))+t\, (1-t)\, (g_1(t)-g_0(t))
\\
&=\frac{1}{t\,(1-t)}\, \big[t\,g_1(t)+(1-t)\, g_0(t)-\big(t\,g_1^2(t)+(1-t)g_0^2(t)-t(1-t) (g_1(t)-g_0(t))\big)\big]
\\
&\ge \frac{1}{t\, (1-t)}\, \big[(t\,g_1(t)+(1-t)\, g_0(t))\cdot \big(1-(t\,g_1(t)+(1-t)\, g_0(t)\big)\big]=\frac{g(t)\,(1-g(t))}{t\,(1-t)}.
\end{align*}
For the last inequality, observe that
$$
tg_1^2(t)+(1-t)\, g_0^2(t)-t\,(1-t)\, (g_1(t)-g_0(t))\le (t\, g_1(t)+(1-t)\, g_0(t))^2
$$
because $g_1(t)-g_0(t)\ge (g_1(t)-g_0(t))^2$.

We take as the base step for induction the case $n=2$. There are three possible Sperner polynomials: $g(t)=t^2$, $g(t)=t^2+t(1-t)=t$ and $g(t)=t^2+2t(1-t)$. In all of them, the claim is satisfied.

\smallskip
If for some $t^*\in(0,1)$ there holds equality in \eqref{g' ge than}, then
$$
g_1(t^*)-g_0(t^*)=(g_1(t^*)-g_0(t^*))^2;
$$
this implies that $g_1(t^*)-g_0(t^*)$ is equal to 1 or 0.

If $g_1(t^*)-g_0(t^*)=1$, then $g_1(t^*)=1$ and $g_0(t^*)=0$, and we conclude that $g$ is the identity
(see Lemma~\ref{lemma:recurrence for g}).

If $g_1(t^*)=g_0(t^*)$ then $g(t)=g_1(t)=g_0(t)$ for all $t$, again by Lemma~\ref{lemma:recurrence for g}, and equality holds in \eqref{g' ge than} for $t=t^*$ and $g$ replaced by $g_1$. Iterating the argument, we conclude that $g$ is the identity.
\end{proof}

%

\section{Fixed points and iteration of modules of selectors}\label{sec:fixed points of selectors}

Let $H$ be a selector. As we have seen, there is a Sperner family $\mathcal{S}=\{A_1, \ldots, A_k\}$ so that  $H \equiv H_{\mathcal S}$.
Write $h_\mathcal{S}$ for the module. 

We analyze now the fixed points of $h_\mathcal{S}$ in $[0,1]$, which play a crucial role  in the limit theorem of Section~\ref{section:limit theorem}. Trivially, $t=0$ and $t=1$ are always fixed points.

In the following cases, either the module $h_{\mathcal{S}}$ is the identity (all points are fixed), or~$h_\mathcal{S}$ has no fixed points in $(0,1)$ (see Lemmas~\ref{lemma:module identity equal projection}, \ref{derivadas en 0 y 1} and \ref{lemma:basis for recursion}):
\begin{description}\itemsep=0pt
\item[identity] If $\mathcal{S}$ contains a singleton and $\bigcap_{j=1}^k A_j\neq\emptyset$ then $k=1$. 
In this case $H_{\mathcal{S}}$ is a projection and $h_{\mathcal{S}}$ is the identity. 
\item[lower] If $\mathcal{S}$ contains a singleton and $\bigcap_{j=1}^k A_j=\emptyset$, then $h_{\mathcal{S}}(x)< x$, for each $x \in (0,1)$. 
\item[upper] If $\mathcal{S}$ contains no singleton and $\bigcap_{j=1}^k A_j\neq\emptyset$, then $h_{\mathcal{S}}(x)> x$, for each $x \in (0,1)$. (This includes the case $\mathcal{S}=\{A_1\}$, with $|A_1|\ge 2$).
\end{description}

Apart from these cases, the module of any selector has a \textit{unique} fixed point in $(0,1)$.
\begin{theorem}\label{conjecture:unique fiexd point}
Let $\mathcal{S}=\{A_1, \ldots, A_k\}$ be a Sperner family, with $k\ge 2$.
If each $|A_j| \ge 2$ and~$\cap_{j=1}^k A_j=\emptyset$, the module $h_{\mathcal{S}}$ has a unique fixed point in $(0,1)$, that happens to be repellent.
\end{theorem}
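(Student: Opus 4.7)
The plan is to combine the boundary derivative information of Lemma~\ref{derivadas en 0 y 1} with the ``isoperimetric'' inequality of Lemma~\ref{isoperimetric inequality for g}, and then wrap up uniqueness by an elementary sign argument on the polynomial $\phi(t)=h_\mathcal{S}(t)-t$.

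First, I would establish existence of an interior fixed point. Under the hypotheses (no singleton in $\mathcal{S}$ and $\bigcap_j A_j=\emptyset$), Lemma~\ref{derivadas en 0 y 1} gives $h'_\mathcal{S}(0)=0$ and $h'_\mathcal{S}(1)=0$. Taylor expanding at the endpoints yields $h_\mathcal{S}(t)-t = -t + O(t^2) < 0$ for $t$ near $0$, and $h_\mathcal{S}(t)-t = (1-t) + O((1-t)^2) > 0$ for $t$ near $1$; the intermediate value theorem then produces a fixed point in $(0,1)$.

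The heart of the argument is a strict derivative bound at any such fixed point. I would transport Lemma~\ref{isoperimetric inequality for g} (stated for the Sperner polynomial $g_\mathcal{S}$) over to $h_\mathcal{S}$ via the duality $g_\mathcal{S}(x)=1-h_\mathcal{S}(1-x)$, which gives $g'_\mathcal{S}(1-t)=h'_\mathcal{S}(t)$ and $g_\mathcal{S}(1-t)=1-h_\mathcal{S}(t)$. Substituting $s=1-t$ in Lemma~\ref{isoperimetric inequality for g} yields
\[
h'_\mathcal{S}(t) \;\ge\; \frac{h_\mathcal{S}(t)\,\bigl(1-h_\mathcal{S}(t)\bigr)}{t\,(1-t)}, \qquad t\in(0,1),
\]
with equality at any $t\in(0,1)$ forcing $g_\mathcal{S}$, hence $h_\mathcal{S}$, to be the identity. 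Since $h'_\mathcal{S}(1)=0\neq 1$, $h_\mathcal{S}$ is not the identity, so the inequality is strict throughout $(0,1)$. Evaluating at a fixed point $t^*$ the right-hand side is $1$, so $h'_\mathcal{S}(t^*)>1$, and any interior fixed point is repellent.

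For uniqueness, set $\phi(t)=h_\mathcal{S}(t)-t$. At every interior zero $t^*$ we now have $\phi'(t^*)>0$, i.e., $\phi$ crosses $0$ strictly upward. If there were two such zeros $t^*_1<t^*_2$, then $\phi>0$ on a right neighborhood of $t^*_1$, and choosing $t^*_2$ to be the least subsequent zero, $\phi$ would approach $0$ from above at $t^*_2$, forcing $\phi'(t^*_2)\le 0$ and contradicting the strict bound. Hence the interior fixed point is unique and repellent. The main obstacle is really the strict derivative bound: it rests on the correct translation of Lemma~\ref{isoperimetric inequality for g} through the $g\leftrightarrow h$ duality and on ruling out the equality case there via $h'_\mathcal{S}(1)=0$ from Lemma~\ref{derivadas en 0 y 1}.
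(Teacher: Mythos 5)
Your proof is correct and follows essentially the same route as the paper's: both rely on Lemma~\ref{derivadas en 0 y 1} for the boundary derivatives $h'_\mathcal{S}(0)=h'_\mathcal{S}(1)=0$ (and hence existence), and both reduce the strict bound $h'_\mathcal{S}(t^*)>1$ at a fixed point to the isoperimetric inequality of Lemma~\ref{isoperimetric inequality for g}, with the equality case ruled out by excluding the identity. The only cosmetic difference is that you transport the inequality from $g_\mathcal{S}$ back to $h_\mathcal{S}$ and work on the $h$ side throughout, whereas the paper translates the condition to $g_\mathcal{S}$ and stays there; you also spell out the elementary sign argument for uniqueness that the paper leaves implicit.
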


We will refer to this fixed point as the \textit{Sperner point} $\omega_H$ of $H$. (In the lower case, $\omega_H=1$; in the upper case, $\omega_H=0$; conventionally, the identity has no Sperner point.)

\begin{proof}[Proof of Theorem {\upshape\ref{conjecture:unique fiexd point}}]
Write simply $h$ and $g$ for $h_\mathcal{S}$ and $g_\mathcal{S}$, respectively.
Recall  (Lemma~\ref{derivadas en 0 y 1}) that $h'(0)=h'(1)=0$. So $h(t)$ must have fixed points in $(0,1)$. If we prove the following:
\begin{equation}
\label{uniqueness condition for h}
\text{if for $t\in(0,1)$, $h(t)=t$, then $h'(t)>1$,}
\end{equation}
then we would get the uniqueness of the fixed point.

%
%
%
%
%
The associated Sperner polynomial
$
g(x)=1-h(1-x),
$
satisfies $g'(x)=h'(1-x)$, and so $g'(0)=g'(1)=0$.

\smallskip
So condition \eqref{uniqueness condition for h} is equivalent to the corresponding condition for $g$:
\begin{equation}
\label{uniqueness condition for g}
\text{if for $t\in(0,1)$, $g(t)=t$, then $g'(t)>1$.}
\end{equation}

Now, Lemma \ref{isoperimetric inequality for g} yields that if $g(t)=t$, then $g'(t)\ge 1$. The case $g'(t)=1$ corresponds to the identity.\end{proof}

\begin{remark}\label{remark:alternative1 to uniqueness}
{\upshape
An alternative argument to prove the uniqueness of the fixed point in $(0,1)$ for any module as in Theorem \ref{conjecture:unique fiexd point} goes as follows.
The estimate \eqref{g' ge than} yields that the function
\begin{equation}\label{function alpha}
\alpha(t)=\frac{g(t)}{1-g(t)}\,\frac{1-t}{t}
\end{equation}
is nondecreasing. Also, writing
$$
\frac{g(t)}{t}=\alpha(t)\,\frac{1-g(t)}{1-t}
$$
one observes that
\begin{equation}
\label{limits of alpha}
\lim_{t\downarrow 0} \alpha(t)=g'(0)\quad\text{and}\quad \lim_{t\uparrow 1} \alpha(t)=\frac{1}{g'(1)}.
\end{equation}
So in our case, $\alpha(t)$ ranges from $0$ to $+\infty$.

In fact, $\alpha(t)$ is strictly increasing. If it were not the case, then $\alpha(t)=\widehat\alpha$ in an interval~$I$, that is, $g(t)(1-t)=\widehat\alpha \, t(1-g(t))$ for all $t\in I$. As $g$ is a polynomial, the same relation would hold for all $t\in[0,1]$, and $\alpha(t)$ would be a constant for $t\in[0,1]$. This contradicts \eqref{limits of alpha}.

If there were two fixed points $t_1<t_2$ in $(0,1)$, then $\alpha(t_1)=1=\alpha(t_2)$. This contradiction with the fact that $\alpha$ is strictly increasing ends the argument.

\smallskip

Observe that $g(t)$ can be written as
$$
g(t)=\frac{\alpha(t)\, t}{1+(\alpha(t)-1)\, t},
$$
with strictly increasing $\alpha(t)$. The $S$-shaped graph of $g$  is transversal to the foliation of the figure, given by the values of $\alpha$, from 0 to $+\infty$  (values of $\alpha$ less than 1 correspond with level curves below the diagonal).

\begin{center}
\resizebox{6cm}{!}{\includegraphics{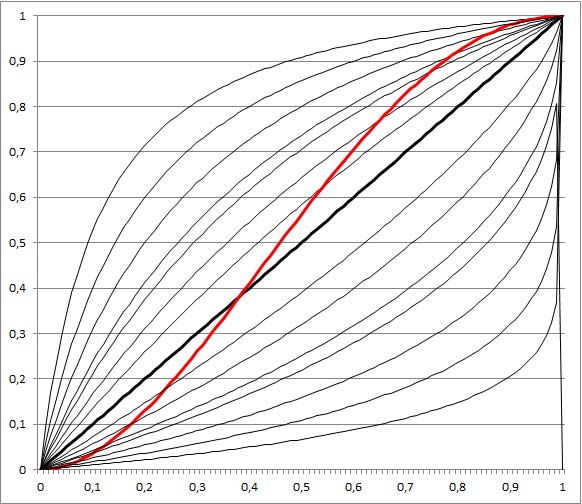}}
\end{center}
}
\end{remark}

\begin{remark}[Fixed points of conservative statistics]
{\upshape The module $h$ of a conservative statistic $H$ (not necessarily continuous) and other than the identity may have in principle more than one fixed point. Those fixed points $p\in[0,1]$ of $h$ with $h'(p)\ge 1$, the \textit{repellent} fixed points,  will play later on, when we consider the iteration of $h$, a role analogous to the Sperner point of selectors
(see Remarks~\ref{remark:iteration of conservative statistics} and~\ref{theorem:limittheorem}). 
}\end{remark}

\subsection{An alternative approach to Theorem \ref{conjecture:unique fiexd point}}

An alternative proof of Theorem \ref{conjecture:unique fiexd point} may be written in terms of some well-known results on monotone Boolean functions which can be traced back to \cite{BL}. Let $H$ be a Sperner statistic. Its restriction to the Boolean cube is a monotone Boolean function.
Recall from \eqref{g as expectation} that, for each $p\in(0,1)$,
$$
g(p)=\E_p(H).
$$
Russo's lemma (see \cite{Ru}) asserts that
$$
g'(p)=I_p(H),
$$
where $I_p(H)$ (the \textit{total influence} of $H$) is defined as
$$
I_p(H)=\sum_{B\in\mathbb{B}^n} \mu_p(B) \, n(B),
$$
and $n(B)$ is the number of neighbours $B'$ of $B$ (differing from $B$ in one coordinate) such that $H(B)\ne H(B')$.

Further, the quantity $I_p({H})$ can be bounded from below as follows
$$
I_p(H)\ge \frac{g(p)}{p}\, \log_p(g(p))
$$
(an \textit{edge isoperimetric inequality}, see formula (3) in \cite{KK}). This yields
\begin{equation}
\label{bound for g'}
g'(p)\ge \frac{g(p)}{p}\, \log_p(g(p))\,.
\end{equation}

If $g(p)=p$, we get that $g'(p)\ge 1$. The case $g(p)=p$ and $g'(p)=1$ corresponds to the identity, as it is shown with the following argument (similar to that in Remark \ref{remark:alternative1 to uniqueness}). Write~\eqref{bound for g'} as
\begin{equation}\label{isoperimetric for qprime}
\frac{g'(p)}{g(p)}\ge \frac{\ln(g(p))}{p\, \ln(p)}, \quad\text{that is}\quad \frac{g'(p)}{g(p)}\, \ln(p)\le \frac{\ln(g(p))}{p}.
\end{equation}
This is equivalent to saying that the function
$$
\alpha(t)=\frac{\ln(g(t))}{\ln(t)}
$$
is nonincreasing because, thanks to \eqref{isoperimetric for qprime},
$$
\alpha'(t)=\frac{1}{\ln(t)^2} \Big[ \frac{g'(t)}{g(t)}\, \ln(t)-\frac{\ln(g(t))}{t}\Big]\le 0.
$$
Notice that, as $\mathcal{S}$ does not contain singletons, $b_1=0$, so $g(t)=b_k\, t^k\,(1+O(t))$ for certain $k\ge 2$ and $b_k\ge 1$. This means that
$$
\lim_{t\downarrow    0} \alpha(t)=k\ge 2.
$$
Observe also that $\lim_{t\uparrow 1} \alpha(t)=0.$ As
$$
g(t)=t^{\alpha(t)},
$$
the values of $\alpha$ (ranging now from $k\ge 2$ to $0$) give rise to a foliation (similar to the one depicted in Remark \ref{remark:alternative1 to uniqueness}) and we end the proof as there.

\begin{remark}\label{remark:alternative3 to uniqueness}
{\upshape The case $p=1/2$ of the above observation (that is, $g(1/2)=1/2$ and $g'(1/2)=1$ together imply that $g$ is the identity) can be dealt with the classical Kruskal--Katona theorem (see \cite{Kr}, \cite{Ka}). Consider an upset $\mathcal{U}$ and denote by $b_k$ the number of sets in $\mathcal{U}$ of size $k$. Write
$$
|\mathcal{U}|=\sum_{k=0}^n b_k\quad\text{and}\quad ||\mathcal{U}||=\sum_{k=0}^n k\, b_k.
$$
As a consequence of the Kruskal-Katona theorem,
$$
\|\mathcal{U}\|\ge n\,|\mathcal{U}|-\|I(|\mathcal{U}|)\|,
$$
where $I(j)$ denotes the set (the \textit{initial segment} of length $j$) comprising the first $j$ sets in the colex order. Notice that $\|I(2^r)\|=r\, 2^{r-1}$ (the mean size of the subsets of $\{1,\dots, r\}$).

In particular, if $|\mathcal{U}|=2^{n-1}$, then
\begin{equation}
\label{cota para |U|bis}
\|\mathcal{U}\| \ge n\, 2^{n-1} -\frac{n-1}{2}\, 2^{n-1}=2^{n-1}\, \frac{n+1}{2}=\frac{n+1}{2}\, |\mathcal{U}|.
\end{equation}

\smallskip
Consider now the upset $\mathcal{A}=\{B\in\mathbb{B}^n: H(B)=1\}$. If $g(1/2)=1/2$, then
$$
\sum_{k=0}^n b_k\,\frac{1}{2^n}=\frac{1}{2}\quad\Longrightarrow\quad |\mathcal{A}|=\sum_{k=0}^n b_k=2^{n-1}.
$$

Observe that
$$
g'(x)=\sum_{k=0}^n b_k\, \big[k\, x^{k-1}\, (1-x)^{n-k}-(n-k)\, x^k\, (1-x)^{n-k-1}\big]\,,
$$
so
$$
g'(1/2)=\frac{1}{2^{n-1}} \sum_{k=0}^n b_k\, (2k-n)=\frac{1}{2^{n-1}}\, 2\|\mathcal{A}\|-n\ge 1,
$$
thanks to \eqref{cota para |U|bis}.
Equality holds only if $\mathcal{A}$ is the complement of the initial segment of length~$2^{n-1}$. This case corresponds to a projection, in which case we already know that~$h$ is the identity.
}
\end{remark}

\subsection{Location of the fixed points of modules of Zermelo and of order statistics}

Order statistics, which somehow have maximal overlapping, and Zermelo statistics, which have no overlapping at~all, are extreme cases of Sperner statistics. We now analyze the location of their Sperner points.

\subsubsection{Order statistics}

The module of the order statistic $H_{r:n}$, $1 \le r \le n$, is the
polynomial
$$
h_{r:n}(x)=\sum_{j=r}^n B^{(n)}_j(x)\,.
$$
For $r=1$ and $r=n$ the only fixed points of $h_{r:n}(x)$ are 0 and 1. For $1<r<n$, the
the uniqueness of the fixed point of $h_{r:n}$ in $(0,1)$ can be proved by a simple Calculus argument.
\begin{lemma}\label{fixed point of order statistic}
The module of any order statistic $h_{r:n}$, with $1 < r <n$, has a
unique fixed point $\omega_{r:n}$ in $(0,1)$, which besides is repellent.
\end{lemma}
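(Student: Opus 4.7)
The plan is to exploit the fact that the derivative $h_{r:n}'$ admits a clean unimodal closed form, so that the number of fixed points of $h_{r:n}$ can be read off from an elementary sign analysis of $\phi(x)=h_{r:n}(x)-x$.

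First I would compute $h_{r:n}'$ by applying the Bernstein derivative identity \eqref{equation:derivativeBernstein} term by term in $h_{r:n}(x)=\sum_{j=r}^n B^{(n)}_j(x)$. The sum telescopes, leaving
$$
h_{r:n}'(x)=n\,B^{(n-1)}_{r-1}(x)=n\binom{n-1}{r-1}\,x^{r-1}(1-x)^{n-r}.
$$
For $1<r<n$ this is a strictly unimodal function on $[0,1]$, vanishing at $x=0$ and $x=1$ and attaining its unique maximum at $x^\ast=(r-1)/(n-1)$. In particular $\phi'(x)=h_{r:n}'(x)-1$ is itself strictly unimodal on $[0,1]$ with $\phi'(0)=\phi'(1)=-1$.

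Next I would argue about zeros of $\phi'$. Since $\phi(0)=0=\phi(1)$ and $h_{r:n}$ is not the identity, $\phi$ cannot be monotone, so $\phi'$ must attain strictly positive values; by unimodality, $\phi'$ then has exactly two zeros $x_1<x_2$ in $(0,1)$, and $\phi'>0$ on $(x_1,x_2)$ and $\phi'<0$ on $(0,x_1)\cup(x_2,1)$. Consequently $\phi$ strictly decreases on $(0,x_1)$, strictly increases on $(x_1,x_2)$, and strictly decreases on $(x_2,1)$. From $\phi(0)=0$ we get $\phi(x_1)<0$ and from $\phi(1)=0$ we get $\phi(x_2)>0$, so by strict monotonicity on each subinterval $\phi$ has no zero in $(0,x_1]\cup[x_2,1)$ and exactly one zero $\omega_{r:n}$ in $(x_1,x_2)$. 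This gives uniqueness of the fixed point in $(0,1)$.

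Finally, because $\omega_{r:n}$ lies in the open interval $(x_1,x_2)$ where $\phi'>0$ strictly, we have $h_{r:n}'(\omega_{r:n})>1$, so $\omega_{r:n}$ is repellent. There is no real obstacle here: once the telescoped formula for $h_{r:n}'$ is in hand, the hypotheses $1<r<n$ guarantee both $\phi'(0)=\phi'(1)=-1$ and strict unimodality of $\phi'$, and the rest is a one-line sign-change count.
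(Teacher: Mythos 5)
Your proof is correct and follows essentially the same route as the paper: compute $h_{r:n}'(x)=n\binom{n-1}{r-1}x^{r-1}(1-x)^{n-r}$ via the telescoping Bernstein identity, note it is unimodal and vanishes at the endpoints, deduce exactly two level-one crossings, and then read off the unique fixed point and its repellence from the resulting sign pattern of $h_{r:n}(x)-x$. The one step where you genuinely diverge is in showing that $h_{r:n}'$ exceeds $1$ somewhere in $(0,1)$: the paper verifies this directly at the mode $x^\ast=(r-1)/(n-1)$ using Stirling-type bounds on the binomial coefficient, whereas you argue indirectly that if $\phi'=h_{r:n}'-1$ were nonpositive throughout, then $\phi$ would be monotone with $\phi(0)=\phi(1)=0$, forcing $h_{r:n}$ to be the identity, which it is not since $h_{r:n}'(0)=0\ne 1$ for $r>1$. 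Your soft argument buys you a Stirling-free proof at essentially no cost; the paper's explicit inequality \eqref{hprime bigger than 1} buys a quantitative lower bound on the peak of the derivative, which is unnecessary for the lemma but perhaps of independent interest.
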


\begin{proof}Fix $1 < r < n$; we shall repeatedly use below that $r\ne 1$ and $r\ne n$. We simplify and denote $h_{r:n}$ by $h$.
Observe that, 
using \eqref{equation:derivativeBernstein},
$$
h^{\prime}(x)=n \,B^{(n-1)}_{r-1}(x) =n\, {{n-1}\choose{r-1}}\, x^{r-1}\, (1-x)^{n-r}\, .
$$
So that, $h^{\prime}(0)=h^{\prime}(1)=0$, while $h^{\prime}(x)>0$, for $x\in (0,1)$. Besides,
$$
\frac{h''(x)}{h'(x)}=\frac{(r-1)-x(n-1)}{x(1-x)}\, .
$$
This gives that $h^{\prime}$ is a unimodal density, which increases for $0 \le x \le {(r-1)}/{(n-1)}$, and decreases on ${(r-1)}/{(n-1)} <x < 1$.
Further, observe that
\begin{equation}\label{hprime bigger than 1}
h^{\prime}\Big(\frac{r-1}{n-1}\Big)=n \binom{n-1}{r-1} \frac{(r-1)^{r-1} (n-r)^{n-r}}{(n-1)^{n-1}} >1\, .
\end{equation}
To see this, one may use Stirling's approximation
$$
\sqrt{2\pi} \,n^{n+1/2}\, e^{-n}\le n!\le e\, n^{n+1/2}\, e^{-n} \quad\text{for all $n\ge 1$.}
$$

From \eqref{hprime bigger than 1} we deduce that there are only two points in $(0,1)$ where $h'$ equals 1, and so the polynomial $h$ has a unique fixed point in $(0,1)$. Besides, since $h^{\prime}(0)=h^{\prime}(1)=0$, this fixed point is repellent.
\end{proof}

\begin{lemma}
\label{location of fixed point of order statistic}
{\rm a)} For any given $n$, the Sperner points $\omega_{r:n}$ increase with $r$:
\begin{equation}\label{equation:omegas-increasing}
0<\omega_{2:n} < \omega_{3:n}< \cdots <\omega_{n-1:n} <1\,,
\end{equation}
and satisfy the symmetry relation
\begin{equation}\label{equation:symmetryrelationomegas}
\omega_{r:n}+\omega_{n-r+1:n}=1\, ,\quad\text{for $1<r<n$.}
\end{equation}

{\rm b)} For any $1 < r < n$,
\begin{equation}\label{equation:errorfixpoints}
\Big|\omega_{r:n} -\frac{2r-1}{2n}\Big| \le
\sqrt{\frac{\ln(n)}{n}}\, .
\end{equation}
\end{lemma}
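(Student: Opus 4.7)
For part (a), my plan is to work with the Bernstein expansion $h_{r:n}(t)=\sum_{j=r}^n B^{(n)}_j(t)$. The key identity
$$h_{r:n}(t)-h_{r+1:n}(t)=B^{(n)}_r(t)$$
is strictly positive on $(0,1)$ by \eqref{Bernstein positive}. Combining this with the sign dichotomy $h_{r:n}(t)<t$ on $(0,\omega_{r:n})$ and $h_{r:n}(t)>t$ on $(\omega_{r:n},1)$ --- which follows from $h_{r:n}(t)\sim \binom{n}{r}t^r$ near $t=0$ (with $r\ge 2$), the equality $h_{r:n}(1)=1$, and the fact that $\omega_{r:n}$ is the unique, repellent interior fixed point (Lemma \ref{fixed point of order statistic}) --- evaluation at $t=\omega_{r+1:n}$ yields $h_{r:n}(\omega_{r+1:n})>h_{r+1:n}(\omega_{r+1:n})=\omega_{r+1:n}$, which forces $\omega_{r+1:n}\in (\omega_{r:n},1)$ and hence \eqref{equation:omegas-increasing}. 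For the symmetry \eqref{equation:symmetryrelationomegas}, I will use $B^{(n)}_j(1-t)=B^{(n)}_{n-j}(t)$ and the partition of unity \eqref{Bernstein partition} to derive $h_{n-r+1:n}(1-t)=1-h_{r:n}(t)$; at $t=\omega_{r:n}$ this reads $h_{n-r+1:n}(1-\omega_{r:n})=1-\omega_{r:n}$, and uniqueness of the interior fixed point of $h_{n-r+1:n}$ identifies $1-\omega_{r:n}$ with $\omega_{n-r+1:n}$.

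For part (b), my strategy is to exploit the same sign dichotomy. With $u_\pm=(2r-1)/(2n)\pm\sqrt{\ln n/n}$, it suffices to show $h_{r:n}(u_+)>u_+$ and $h_{r:n}(u_-)<u_-$, the cases $u_+\ge 1$ or $u_-\le 0$ being trivial. Thanks to the symmetry just proved, I may assume $r\le (n+1)/2$, so that $(2r-1)/(2n)\le 1/2$ and $\omega_{r:n}\le 1/2$. The probabilistic reading $h_{r:n}(u)=\P(S\ge r)$ with $S\sim \mathrm{Bin}(n,u)$ turns both checks into one-sided deviations of size $\sqrt{n\ln n}+1/2$ from the mean $nu_\pm=(r-1/2)\pm\sqrt{n\ln n}$. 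Hoeffding's inequality then yields
$$h_{r:n}(u_+)\ge 1-\exp\Big(\!-\frac{2(\sqrt{n\ln n}+1/2)^2}{n}\Big)\ge 1-\frac{1}{n^2},$$
and symmetrically $h_{r:n}(u_-)\le 1/n^2$. Since $u_+\le 1/2+\sqrt{\ln n/n}$, the inequality $1-1/n^2>u_+$ holds for all sufficiently large $n$; the finitely many small $n$ are dealt with by inspection (already for $n\le 8$ one has $u_+\ge 1$, making the upper bound trivial).

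The main obstacle I anticipate is the edge regime $0<u_-\le 1/n^2$, where the crude Hoeffding bound $h_{r:n}(u_-)\le 1/n^2$ fails to strictly undercut $u_-$. Such a regime can only occur when $r$ is of order $\sqrt{n\ln n}$, so that $\omega_{r:n}$ itself is very close to $0$. To handle it, I would fall back on the leading-order estimate $h_{r:n}(u)\le \binom{n}{r}u^r(1+O(nu))$ as $u\to 0$; combined with $\binom{n}{r}\le (en/r)^r$, this gives $h_{r:n}(u_-)\le (en\,u_-/r)^r$, which for $u_-\le 1/n^2$ and $r\gtrsim \sqrt{n\ln n}$ is superpolynomially small in $n$ and hence well below $u_-$. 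This restores $h_{r:n}(u_-)<u_-$ and, via the sign dichotomy, $\omega_{r:n}>u_-$, closing the argument.
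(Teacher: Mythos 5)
Part (a) of your proof follows the paper's argument exactly: the pointwise ordering $h_{r:n}>h_{r+1:n}$ on $(0,1)$, combined with the single-crossing structure of $h_{r:n}-\mathrm{id}$ about its unique interior fixed point, gives the ordering of the $\omega_{r:n}$, and the Bernstein reflection identity gives the symmetry relation \eqref{equation:symmetryrelationomegas}. Nothing to add there.

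Part (b) is a genuinely different route. The paper applies Hoeffding \emph{once}, at $\omega_{r:n}$ itself: reading $h_{r:n}(\omega_{r:n})=\omega_{r:n}$ as a one-sided deviation statement for $\mathrm{Bin}(n,\omega_{r:n})$ yields $t_{r:n}^2\le \frac{1}{2n}\max\{\ln(1/\omega_{r:n}),\ln(1/\omega_{n-r+1:n})\}$ directly, and the single additional ingredient is the uniform lower bound $\omega_{2:n}>1/n^2$ (extracted from the fixed-point equation for $\omega_{2:n}$), which makes the logarithmic factor $\le 2\ln n$ in one stroke and requires no case analysis in $n$. You instead apply Hoeffding at the two test points $u_\pm$ and invoke the sign dichotomy; that is sound in spirit, but it pushes you into edge cases at both ends, and there your write-up has two concrete slips. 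First, the parenthetical ``already for $n\le 8$ one has $u_+\ge 1$'' is false: for $n=4$, $r=2$ one has $u_+=3/8+\sqrt{\ln 4/4}\approx 0.96<1$, while the Hoeffding bound only gives $h(u_+)\ge 1-1/16\approx 0.94<u_+$, so it does not close the gap; the same happens at $n=9$, $r=5$, where $u_+\approx 0.994>1-1/81\approx 0.988$. You appear to have confused the upper estimate $1/2+\sqrt{\ln n/n}$ (which is indeed $\ge 1$ for $n\le 8$) with $u_+$ itself, which depends on $r$ and is often strictly smaller. The exceptional $(n,r)$ do satisfy the Lemma (in the case $n=9,r=5$ the difference is exactly $0$ by symmetry), but ``inspection'' must be carried out over the full range $n\le 9$, $2\le r\le \lfloor (n{+}1)/2\rfloor$, not dismissed by a false claim. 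Second, ``superpolynomially small in $n$ and hence well below $u_-$'' is not itself a comparison, since $u_-$ may be far smaller than any explicit power of $n$. The repair is cheap and in fact mirrors the paper's $\omega_{2:n}>1/n^2$ step: the union bound gives $h_{r:n}(u)\le\binom{n}{r}u^r$, and for $0<u_-\le 1/n^2$ and any $r\ge 2$,
\begin{equation*}
\binom{n}{r}\,u_-^{\,r-1}\le \frac{n^r}{r!}\,n^{-2(r-1)}=\frac{n^{2-r}}{r!}\le \frac{1}{2}<1,
\end{equation*}
so $h_{r:n}(u_-)<u_-$ unconditionally, with no appeal to the size of $r$. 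With that replacement and an honest enumeration of the small-$n$ cases, your argument for part (b) is correct, though somewhat heavier than the paper's.
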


We refer to \cite{KAlam} for some general results concerning unimodality of order statistics.

Equation \eqref{equation:errorfixpoints} implies that if $n \to
\infty$ and $r/n \to \lambda \in (0,1)$ then $\omega_{r:n} \to
\lambda$.

\begin{proof}
a) Since $h_{r:n}(x)> h_{s:n}(x)$, for any $x\in(0,1)$ and for $1 < r <
s <n$, one deduces that $\omega_{r:n}<\omega_{s:n}$ for $1 < r < s
<n$.

As a consequence of \eqref{Bernstein partition}, these modules satisfy the symmetry relation
\begin{equation}
\label{equation:symmetryrelationmodules}h_{r:n}(x)+h_{n-r+1:n}(1-x)\equiv
1\,.
\end{equation}
This yields \eqref{equation:symmetryrelationomegas}.

\smallskip

b) To estimate  the location of the fixed points, we first observe from \eqref{module order statistic}
that, for any~$x$,
$$
h(x)=\P(\textrm{Bin}(n,x)\ge r)=\P(\textrm{Bin}(n,x)\ge
r-{1}/{2})\,,
$$
and so,
$$
\P\Big(\frac{\textrm{Bin}(n,\omega_{r:n})}{n}-\omega_{r:n}\ge
\underset{:=t_{r:n}}{\underbrace{\frac{2r-{1}}{2n}-\omega_{r:n}}}\Big)=\omega_{r:n}\,
.
$$
Asume that $t_{r:n}\ge 0$. Then from Hoeffding's inequality (see, for instance, 
Theorem A.1.4 in~\cite{AS}), we
deduce that
$$\omega_{r:n} \le e^{-2nt_{r:n}^2}$$
and, consequently, that
$$
t_{r:n}^2 \le \frac{1}{2n}
\ln{\big(\frac{1}{\omega_{r:n}}\big)}\,.
$$

Assume now that $t_{r:n}\le 0$. Write
$\omega^*_{r:n}=\omega_{n-r+1:n}=1-\omega_{r:n}$. We have
$$
\P\Big(\frac{\textrm{Bin}(n,\omega^*_{r:n})}{n}-\omega^*_{r:n}\ge
\frac{2(n-r+1)-{1}}{2n}-\omega^*_{r:n}\Big)=\omega^*_{r:n}\, ,
$$
and, therefore, that
$$
\P\Big(\frac{\textrm{Bin}(n,\omega^*_{r:n})}{n}-\omega^*_{r:n}\ge
-t_{r:n}\Big)=\omega^*_{r:n}\, .
$$
And now, from Hoeffding's inequality, as above,
$$t_{r:n}^2 \le \frac{1}{2n} \ln{\big(\frac{1}{\omega^*_{r:n}}\big)}$$

We conclude that
$$t_{r:n}^2 \le \frac{1}{2n}
\max\Big(\ln\big(\frac{1}{\omega_{r:n}}\big)\, ,\,
\ln\big(\frac{1}{\omega^*_{r:n}}\big)\Big)\, .$$

We shall show now that $\omega_{2:n}\ge {1}/{n^2}$. This will finish the proof, thanks to \eqref{equation:omegas-increasing}.


Now, fix $n$ and write $\omega=\omega_{2:n}$; it satisfies
$$
1-\omega=\P(\textrm{Bin}(n,\omega)\le 1)\, ,
$$
or,
$$
1-\omega=(1-\omega)^{n}+n\, \omega\,(1-\omega)^{n-1}\, ,
$$
or,
$$
1=(1-\omega)^{n-1}+n(1-\omega)^{n-2}\omega=(1-\omega)^{n-2} (1+(n-1)\omega)\, .
$$
Let $f$ be defined by $f(x)=(1-x)^{n-2}(1+(n-1)x)$. Now $f(0)=1$, $f(1)=0$, $f$ is positive in $[0,1)$, increases up to $x={1}/{(n-1)^2}$ and decreases thereafter. So, the first point  $x$ after~0 where it reaches the value $1$, which is $x=\omega$, must satisfy $x \ge {1}/{(n-1)^2} > {1}/{n^2}$, as claimed.
\end{proof}

\subsubsection{Zermelo statistics} Again, there is a direct proof of the fact that the modules of Zermelo statistics (except for some exceptional trivial cases), have a unique fixed point in~$(0,1)$.

\begin{lemma}\label{lemma:fixpoint pairwise disjoint sperner module}
Let $k\ge 2$ and let $\alpha_1, \ldots, \alpha_k$ be integer numbers, $\alpha_j \ge 2$, for $j=1, \ldots, k$. The equation
\begin{equation}\label{eq:fixpoint pairwise disjoint sperner module}
\prod_{j=1}^k (1-s^{\alpha_j})=1-s
\end{equation}
has a {\upshape unique} solution for $s \in (0,1)$.
\end{lemma}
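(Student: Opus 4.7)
The plan is to reduce the statement to the uniqueness theorem for Sperner fixed points already established in Theorem~\ref{conjecture:unique fiexd point}. I recognize \eqref{eq:fixpoint pairwise disjoint sperner module} as precisely the fixed-point equation for the module of a Zermelo statistic: taking any disjoint family $\mathcal{S}=\{A_1,\dots,A_k\}$ in, say, $\{1,\dots,\sum_j\alpha_j\}$ with $|A_j|=\alpha_j$, the module formula \eqref{module Zermelo statistic} gives $h_\mathcal{S}(t)=\prod_j(1-(1-t)^{\alpha_j})$, and the substitution $t=1-s$ converts $h_\mathcal{S}(t)=t$ into \eqref{eq:fixpoint pairwise disjoint sperner module}. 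Since $k\ge 2$, every $\alpha_j\ge 2$, and the members of $\mathcal{S}$ are pairwise disjoint (so $\bigcap_jA_j=\emptyset$), $\mathcal{S}$ satisfies the hypotheses of Theorem~\ref{conjecture:unique fiexd point}, which yields a unique $t_*\in(0,1)$ with $h_\mathcal{S}(t_*)=t_*$. Setting $s_*=1-t_*$ delivers the desired unique solution of \eqref{eq:fixpoint pairwise disjoint sperner module} in $(0,1)$.

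Alternatively, one can give a more self-contained argument for this special case by analyzing the polynomial $F(s)=\prod_j(1-s^{\alpha_j})-(1-s)$ directly. Observe that $F(0)=F(1)=0$, while $F'(0)=1>0$ (because each $\alpha_j\ge 2$ forces the derivative of $\prod_j(1-s^{\alpha_j})$ to vanish at $s=0$); and because $k\ge 2$ makes $\prod_{i\ne j}(1-s^{\alpha_i})$ vanish at $s=1$, also $F'(1)=1>0$. Hence $F>0$ just to the right of $0$ and $F<0$ just to the left of $1$, and the intermediate value theorem produces some $s_*\in(0,1)$ with $F(s_*)=0$.

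For uniqueness along this second route, the key step is to show that at every zero $s_*\in(0,1)$ one has $F'(s_*)<0$. Using the fixed-point relation $\prod_i(1-s_*^{\alpha_i})=1-s_*$ to eliminate $\prod_{i\ne j}(1-s_*^{\alpha_i})$ from $F'(s_*)$, this condition reduces to
\[
\sum_{j=1}^k\frac{\alpha_j\, s_*^{\alpha_j-1}}{P_{\alpha_j}(s_*)}>1,\qquad\text{where } P_\alpha(s):=1+s+\cdots+s^{\alpha-1}.
\]
Once this inequality is in place, $F$ cannot return to zero after its first sign change, and uniqueness follows. The real obstacle is precisely this inequality: each summand lies in $(0,1)$, and one must exploit the constraint $\prod_jP_{\alpha_j}(s_*)=(1-s_*)^{-(k-1)}$ (which is equivalent to $F(s_*)=0$) to force the sum above $1$. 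This is exactly the content of the isoperimetric-type estimate in Lemma~\ref{isoperimetric inequality for g}, so the self-contained path and the appeal to Theorem~\ref{conjecture:unique fiexd point} ultimately share the same essential ingredient.
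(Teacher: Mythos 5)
Your first route, reducing the lemma to Theorem~\ref{conjecture:unique fiexd point} via a disjoint family $\mathcal{S}$ with $|A_j|=\alpha_j$ and the substitution $t=1-s$, is logically correct; the paper acknowledges exactly this (``Again, there is a direct proof\dots''), but the point of this lemma is to give an elementary calculus proof for the Zermelo case that does \emph{not} pass through the machinery of Sperner polynomials. Your second route is the same in spirit as the paper's direct proof and correctly identifies the crucial quantity: writing $P_\alpha(s)=1+s+\cdots+s^{\alpha-1}$, you reduce matters to showing that $g(s):=\sum_j \alpha_j s^{\alpha_j-1}/P_{\alpha_j}(s)>1$ at any root $s_*$. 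However, you then stop and assert that this inequality is ``exactly the content of'' Lemma~\ref{isoperimetric inequality for g}; that is not so, and it is precisely the gap the paper's proof fills elementarily. The paper defines $f(t)=(1-t)^{-1}\prod_j(1-t^{\alpha_j})$, computes $f'=f\bigl((1-t)^{-1}-\sum_j \alpha_j t^{\alpha_j-1}/(1-t^{\alpha_j})\bigr)$, and observes that $f'=0$ is equivalent to $g(t)=1$; then it checks directly that each summand $\alpha_j t^{\alpha_j-1}/P_{\alpha_j}(t)$ is strictly increasing on $(0,1)$ (a short computation), so $g$ increases from $g(0)=0$ to $g(1)=k\ge 2$, whence $f'$ vanishes at exactly one $t_0\in(0,1)$. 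Together with $f(0)=1$, $f'(0)=1>0$, $f(1)=0$, this makes $f$ unimodal and forces the level set $\{f=1\}$ to meet $(0,1)$ exactly once. That monotonicity of $g$ is the ``missing ingredient'' in your second route; once you have it, the inequality $g(s_*)>1$ follows because the unique root $s_*$ must lie on the decreasing branch $(t_0,1)$ of $f$, where $g>1$. No appeal to the edge-isoperimetric bound is needed.
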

\begin{proof}
Consider the function $f$ defined for $t \in [0,1]$ by
$$
f(t)=\frac{1}{1-t} \,\prod_{j=1}^k (1-t^{\alpha_j})\,.
$$
Observe that $f(0)=1$ and $f(1)=0$. We want to show that $f(t)=1$ occurs only at a single $t \in(0,1)$.

For $t \in[0,1]$ we have
$$
f^{\prime}(t)=f(t)\Big(\frac{1}{1-t} -\sum_{j=1}^{k} \frac{\alpha_j \,t^{\alpha_j-1}}{1-t^{\alpha_j}}\Big)\,.
$$
In particular, $f^{\prime}(0)=1$, and therefore $f(t)=1$ for some $t \in (0,1)$.

To show that there is only one solution of \eqref{eq:fixpoint pairwise disjoint sperner module}, it is enough to show that $f^{\prime}$ vanishes at a single point in $(0,1)$, or, equivalently, that the function $g$ given by
$$
g(t)=\sum_{j=1}^{k} \frac{\alpha_j \,t^{\alpha_j-1}(1-t)}{1-t^{\alpha_j}}
$$
takes the value 1 for a unique  $t \in (0,1)$. But observe that  $g$ is increasing, $g(0)=0$ and $g(1)=k >1$.
\end{proof}

In fact, Lemma \ref{lemma:fixpoint pairwise disjoint sperner module} holds for real $\alpha_j>1$.

\medskip

Let $\mathcal{S}=\{A_1, \ldots, A_k\}$ be a  disjoint family with $k \ge 2$. Denote $a_j=|A_j|$, for $1 \le j \le k$. Recall that the module $h_{\mathcal S}$ of its associated Zermelo statistic $H_{\mathcal S}$ is given by
$$
h_{\mathcal{S}}(t)=\prod_{j=1}^k \big(1-(1-t)^{a_j}\big)\,.
$$

If one of the $a_j$ is 1, then $h_{\mathcal{S}}(t) <t$, for each $t \in (0,1)$ and $h_{\mathcal{S}}$ has no fixed point in~$(0,1)$. If each $a_j \ge 2$,  then $t$  is a fixed point of $h_{\mathcal S}$ if and only if $s=1-t$ satisfies~\eqref{eq:fixpoint pairwise disjoint sperner module}. Therefore,
\begin{corollary} For a Zermelo statistic and with the notations above, 
\begin{enumerate}
\item[\rm a)] If $a_j =1$ for some $1 \le j \le k$, then $h_{\mathcal S}$ has no fixed points in $(0,1)$.
\item[\rm b)] If $a_j\ge 2$ for each $1 \le j \le k$, then $h_{\mathcal S}$ has a unique Sperner point in $(0,1)$.
\end{enumerate}
\end{corollary}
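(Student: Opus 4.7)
The plan is to reduce both parts to results already established in the paper. The key structural observation is that since $\mathcal{S}$ is a disjoint family with $k \ge 2$, any two distinct members are disjoint, and in particular $\bigcap_{j=1}^k A_j = \emptyset$. This automatically places every Zermelo statistic (with $k\ge 2$) into either the ``lower'' regime or into the setting of Theorem~\ref{conjecture:unique fiexd point}, depending only on whether $\mathcal{S}$ contains a singleton.

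For part~(a), if some $a_j = 1$ then $\mathcal{S}$ contains a singleton, so by Lemma~\ref{derivadas en 0 y 1} we have $h'_{\mathcal{S}}(1) \ge 1 > 0$. Combined with the structural observation $\bigcap A_j = \emptyset$, part~(a) of Lemma~\ref{lemma:basis for recursion} applies and yields $h_{\mathcal{S}}(t) < t$ for every $t \in (0,1)$, so no fixed point can lie in the open interval. For part~(b), if each $a_j \ge 2$ then $\mathcal{S}$ contains no singleton; together with $\bigcap A_j = \emptyset$ and $k\ge 2$, this is exactly the hypothesis of Theorem~\ref{conjecture:unique fiexd point}, which delivers the unique repellent Sperner point in $(0,1)$.

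As a self-contained alternative for part~(b), I would instead invoke Lemma~\ref{lemma:fixpoint pairwise disjoint sperner module} directly: writing the Zermelo module as $h_{\mathcal{S}}(t) = \prod_{j=1}^k (1-(1-t)^{a_j})$ and substituting $s = 1-t$, the fixed-point equation $h_{\mathcal{S}}(t)=t$ becomes precisely $\prod_{j=1}^k (1-s^{a_j}) = 1-s$, which has a unique solution $s \in (0,1)$ by that lemma, and hence a unique fixed $t \in (0,1)$. There is essentially no obstacle here; the statement is a bookkeeping corollary gathering previous work, and the only mild subtlety is remembering to use disjointness of $\mathcal{S}$ (for $k\ge 2$) to force $\bigcap A_j = \emptyset$, which is what allows the dichotomy ``singleton present $\Rightarrow$ lower case; all $|A_j|\ge 2$ $\Rightarrow$ unique interior fixed point'' to cover all possibilities.
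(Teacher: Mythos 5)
Your proposal is correct and reaches the same conclusions as the paper via essentially the same ingredients; in particular your self-contained alternative for part (b), substituting $s=1-t$ and invoking Lemma~\ref{lemma:fixpoint pairwise disjoint sperner module}, is exactly the paper's argument. The only stylistic difference is in part (a): the paper just reads the inequality $h_{\mathcal{S}}(t)<t$ directly off the product formula $h_{\mathcal{S}}(t)=\prod_j\bigl(1-(1-t)^{a_j}\bigr)$ (one factor equals $t$, the others lie in $(0,1)$), whereas you route through the general structure results Lemma~\ref{derivadas en 0 y 1} and Lemma~\ref{lemma:basis for recursion}, which is also valid but slightly less direct in this disjoint-family setting.
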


As for the location of the fixed point, consider, for $k\ge 2$ and $m \ge 2$,  the Zermelo statistics~$Z_{k,m}$ where the  disjoint  family have $k$ members each of size $m$ (so that $n=km$), and denote the unique fixed point of its module $h_{k,m}(t)=(1-(1-t)^{m})^{k}$ by $\eta_{k,m}$.

It can be proved that:
\begin{lemma}
Fix integers $k \ge 2$ and $m \ge 2$. Then, for large $m$ we have that
$$
1-\eta_{k,m} \asymp \frac{1}{k^{1/(m-1)}}.
$$
More precisely,
$$
\lim_{m \to \infty}\sup_{k\ge 2}\Big|\ln\big(\frac{1}{1-\eta_{k,m}}\big)-\frac{\ln(k)}{m-1} \Big|= 0\, .
$$
In fact,
$$
\frac{1}{b(m)} \frac{1}{k^{1/(m-1)}}\le 1-\eta_{k,m} \le b(m) \,\frac{1}{k^{1/(m-1)}}\, ,
$$
where
$$
b(m)= 3 (\ln(m))^{1/{m-1}}\, .
$$
\end{lemma}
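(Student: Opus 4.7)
My plan is to work with the complementary variable $s = 1-\eta_{k,m}\in (0,1)$, which satisfies the fixed-point equation $(1-s^m)^k=1-s$, equivalently $s^m = 1-(1-s)^{1/k}$. The lower bound $s\ge k^{-1/(m-1)}$ is the easy half: since $k\ge 2$, Bernoulli's inequality with exponent $1/k\in(0,1)$ gives $(1-s)^{1/k}\le 1-s/k$, hence $s^m \ge s/k$ and $s^{m-1}\ge 1/k$. This already yields the lower side of the stated inequality, as $b(m)\ge 1$.

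The upper bound is where the real work lies. Taking logarithms of $(1-s^m)^k=1-s$ and using $-\ln(1-s^m)\ge s^m$ and $-\ln(1-s)\le s/(1-s)$, one obtains $ks^m\le s/(1-s)$, that is, $s^{m-1}\le 1/(k(1-s))$. This naive bound is too weak in the regime of small $k$, where $s$ is close to $1$ and $1/(1-s)$ blows up. The key extra ingredient is an a-priori bound keeping $s$ away from $1$: from the factorisation $1-s^m = (1-s)(1+s+\cdots+s^{m-1})\le m(1-s)$ and the equation $1-s=(1-s^m)^k\le m^k(1-s)^k$, we conclude $1-s\ge m^{-k/(k-1)}$. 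Plugging this into the previous estimate and using $k/(k-1)\le 2$ for $k\ge 2$,
$$ s^{m-1}\le \frac{m^{k/(k-1)}}{k}\le \frac{m^2}{k}, \qquad \text{hence} \qquad s \le m^{2/(m-1)}\,k^{-1/(m-1)}. $$

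Combining the two directions, $k^{-1/(m-1)}\le s\le m^{2/(m-1)}\,k^{-1/(m-1)}$. Taking logarithms gives $0\le \ln(1/s) - \ln(k)/(m-1)\le 2\ln(m)/(m-1)$, which tends to $0$ as $m\to\infty$ uniformly in $k\ge 2$, proving the limit. For the explicit bound with $b(m)=3(\ln m)^{1/(m-1)}$, the lower side is immediate from $s\ge k^{-1/(m-1)}$ and $b(m)\ge 1$, while the upper side reduces to the elementary inequality $m^{2/(m-1)}\le 3(\ln m)^{1/(m-1)}$, i.e.\ $m^2\le 3^{m-1}\ln m$, a routine verification for $m$ large enough. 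The main obstacle, foreseeable already from $k=2$ where $1-s\asymp 1/m^2$ (much smaller than $k^{-1/(m-1)}\sim 1-(\ln 2)/m$), is that the logarithmic linearisation of the equation loses control as $s\uparrow 1$; the auxiliary bound $1-s\ge m^{-k/(k-1)}$ is precisely what repairs this failure.
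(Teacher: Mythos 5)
The paper states this lemma without proof (the text reads ``It can be proved that:'' and then simply records the statement), so there is no argument in the paper to compare yours against; the proposal has to stand on its own.

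Your argument is correct in its main lines and is a natural way to attack the problem. Passing to $s=1-\eta_{k,m}$ and the equation $(1-s^m)^k=1-s$ is the right normalisation; the lower bound $s\ge k^{-1/(m-1)}$ via Bernoulli's inequality $(1-s)^{1/k}\le 1-s/k$ is clean; and the crucial observation is indeed the a priori bound $1-s\ge m^{-k/(k-1)}$, obtained from $1-s^m\le m(1-s)$ fed back into the fixed-point equation, which is exactly what controls the regime $s\uparrow 1$ where the crude estimate $s^{m-1}\le 1/(k(1-s))$ would otherwise degenerate. Combining the two halves yields $k^{-1/(m-1)}\le s\le m^{2/(m-1)}\,k^{-1/(m-1)}$, and since $m^{2/(m-1)}$ decreases from $4$ (at $m=2$) to $1$, this already gives the $\asymp$ statement with an absolute implied constant, and taking logarithms gives the stated uniform limit; these two parts of the lemma are fully proved by your argument. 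The only soft spot, which you candidly flag, is the explicit constant $b(m)=3(\ln m)^{1/(m-1)}$: your comparison $m^{2/(m-1)}\le 3(\ln m)^{1/(m-1)}$, i.e.\ $m^2\le 3^{m-1}\ln m$, holds for $m\ge 3$ but fails at $m=2$ (where $4>3\ln 2$), and even your sharper intermediate bound $s\le 2^{k/(k-1)}/k$ does not drop below $3\ln 2/k$ until $k$ is in the high teens, so a handful of cases $(m=2,\,3\le k\le 17)$ would need a direct check. Since the lemma is announced ``for large $m$,'' this residual gap is acceptable; if you wanted the bound for all $m\ge 2$ you would either verify the finitely many exceptional pairs numerically or replace $b(m)$ by the slightly larger $\max\bigl(3(\ln m)^{1/(m-1)},\,m^{2/(m-1)}\bigr)$, which your argument does deliver unconditionally.
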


\subsection{Iteration  of modules of selectors}\label{sec:iteration of modules}

Let $H$ be a selector (or Sperner statistic), and let  $\mathcal{S}=\{A_1, \ldots, A_k\}$ be the associated Sperner family, so that  $H =H_{\mathcal S}$.
Write $h=h_\mathcal{S}$ for the module, and $h^{(N)}$ for the composition of $h$ with itself $N$ times.

To analyze the asymptotic behaviour of $h^{(N)}$ as $N\to\infty$, we distinguish, as in the beginning of Section \ref{sec:fixed points of selectors},  four possibilities:
\begin{description}
\item[identity] \textit{$h$ is the identity}. In this case, $h^{(N)}$ is the identity for any $N \ge 1$. Recall that this occurs only when the Sperner family consists of a singleton (so $H$ is a projection).

    \item[lower] \textit{$h(t)<t$, for any $t \in (0,1)$}. Recall that this occurs precisely when $\mathcal{S}$ contains a singleton and $k \ge 2$. Observe that $h^{\prime}(0)=0$ and $h^{\prime}(1)>0$. In this case,
                $$
       h^{(\infty)}(t):=\lim_{N \to \infty} h^{(N)}(t)=\begin{cases}
       0, & \text{for} \  0\le t <1\, ,\\
       1, & \text{for} \  t =1\, .
              \end{cases}
        $$
%
%

\item[upper] \textit{$h(t)>t$, for any $t \in (0,1)$}.  Here,  $h^{\prime}(0)>0$ and $h^{\prime}(1)=0$. This occurs when $\bigcap_{j=1}^k A_j$ is non empty, and $\mathcal{S}$ contains no singleton. In this case,
                   $$
       h^{(\infty)}(t)=\lim_{N \to \infty} h^{(N)}(t)=\begin{cases}
              0, & \text{for} \  t =0\, ,
              \\
1, & \text{for} \  0<  t \le 1\, .\\
              \end{cases}
        $$


    \item[fixed point in $(0,1)$] Here $h^{\prime}(0)=0$ and $h^{\prime}(1)=0$, and $h$  has a unique fixed point $\omega$ in $(0,1)$, which is repellent. We have
        $$
       h^{(\infty)}(t)=\lim_{N \to \infty} h^{(N)}(t)=\begin{cases}
       0, & \text{for} \  0\le t <\omega\, ,\\
       \omega , & \text{for} \  t=\omega  \, ,\\
       1, & \text{for} \  \omega < t \le 1\, .
              \end{cases}
        $$
\end{description}
\begin{figure}[h]
\centering\resizebox{6.3cm}{!}{\includegraphics{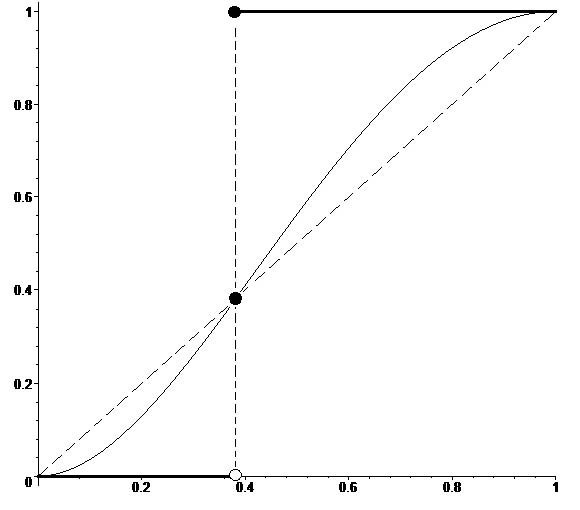}}
\caption{The limit function $h^{(\infty)}$ for $h(t)=(1-(1-t)^2)^2$.}\label{fig:hinfty}
\end{figure}

\begin{remark}[Iteration of conservative statistics]\label{remark:iteration of conservative statistics}
{\upshape Let $h$ be the module of a conservative statistic $H$. Recall (Lemma \ref{lemma:conservative_implies_selector}) that $H$ satisfies the selecting property; no continuity is assumed (or required) here. 

If $h(x)=x$, for every $x \in [0,1]$, then likewise $h^{(N)}(x)=x$, for every  $x \in [0,1]$.


If $h$ is not the identity, then $h$ has a finite number of fixed points.
Let $a_0=0 < a_1 <a_2 < \cdots < a_k < a_{k+1}=1$, be the fixed points of $h$.
We term an (open) interval $(a_j, a_{j+1})$ determined by consecutive fixed points an \textit{up} interval if $h(x)>x$ for every $x \in (a_j, a_{j+1})$; otherwise, if $h(x)<x$ for every $x \in (a_j, a_{j+1})$, we call it a \textit{down} interval.
In an up interval  $(a_j, a_{j+1})$, we have that $\lim_{N\to \infty}h^{(N)}(x)=a_{j+1}$, for every $x \in  (a_j, a_{j+1})$, while in  a down interval $(a_j, a_{j+1})$, we have that $\lim_{N\to \infty}h^{(N)}(x)=a_{j}$.

Consequently, except for a finite number of points $x$, we have that $h^{(N)}(x)$ converges, as $N \to \infty$,  to $h^{(\infty)}(x)$, where $h^{(\infty)}$ is the distribution function of a random variable $L_H$ which takes as values only some of the fixed points of $h$, precisely those where $h^{\prime}(x) \ge 1$.  This random variable $L_H$ depends only on $H$.
}
\end{remark}

\section{A limit theorem for selectors}\label{section:limit theorem}

Let $H$ be a selector (or Sperner statistic) of dimension $n$ and module $h$.
We are interested in the asymptotic behavior of the repeated application of $H$ to random samples.

Recall that, for a random variable $X$, $\H(X)=H(X_1,\dots, X_n)$, where the $X_j$ are independent copies of $X$. Now define
$$
\H^{(N)}(X)=\H(\H^{(N-1)}(X))\quad\text{for $N\ge 2$}
$$
(of course, $\H^{(1)}=\H$). Observe that $\H^{(N)}$ acts on $n^N$ independent copies of $X$.


By its very definition, for any random variable $X$ the distribution function $F_{\HN(X)}$ of $\HN(X)$ is given by $h^{(N)}\circ F_X$, where $h^{(N)}$ denotes the composition of $h$ with itself $N$ times.


For projections, we already know that $h$ is the identity, and we get:
$$
\mathbf{H}^{(N)}(X)\stackrel{\rm d}{=}X, \quad\text{for any $N \ge 1$ and any random variable~$X$.}
$$


\begin{theorem}\label{main th Sperner}
Let $H$ be a Sperner statistic different from a projection. Let $\omega_H$ be its Sperner point. Then for any random variable $X$, we have
$$
\mathbf{H}_N(X) \stackrel{d}{\longrightarrow} Q_X(\omega_H)\, .
$$
\end{theorem}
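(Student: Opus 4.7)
The plan is to reduce the theorem to a pointwise statement about iterates of the polynomial $h=h_{\mathcal{S}}$ by invoking the identity
\[
F_{\H^{(N)}(X)}(t)=h^{(N)}\bigl(F_X(t)\bigr),\qquad t\in\mathbb{R},
\]
which is immediate from the definition of a conservative statistic applied $N$ times. Thus the asymptotic behaviour of the distribution of $\H^{(N)}(X)$ is controlled entirely by the asymptotic behaviour of $h^{(N)}$ on $[0,1]$, which has already been described case-by-case in Section~\ref{sec:iteration of modules} (identity, lower, upper, and fixed-point case). Since $H$ is not a projection, only the lower, upper, and fixed-point cases are relevant, and in each one we have an explicit pointwise limit $h^{(\infty)}$.

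Next, I would compute the candidate limit $F_\infty(t):=\lim_N h^{(N)}(F_X(t))$ directly from the table for $h^{(\infty)}$. In the fixed-point case, with $\omega=\omega_H\in(0,1)$,
\[
F_\infty(t)=\begin{cases} 0,&F_X(t)<\omega,\\ \omega,&F_X(t)=\omega,\\ 1,&F_X(t)>\omega,\end{cases}
\]
and analogous (simpler) formulas in the lower case $\omega_H=1$ and upper case $\omega_H=0$. The aim is to recognize $F_\infty$ as the distribution function of $Q_X(\omega_H)$. Using the definitions of $a_X(\omega)$ and $z_X(\omega)$ from Section~\ref{notation}, I would translate the conditions $F_X(t)<\omega$, $F_X(t)=\omega$, $F_X(t)>\omega$ into $t<a_X(\omega)$, $a_X(\omega)\le t<z_X(\omega)$, $t\ge z_X(\omega)$, respectively. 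Since $Q_X(\omega)$ is the variable that equals $a_X(\omega)$ with probability $\omega$ and $z_X(\omega)$ with probability $1-\omega$, its distribution function is exactly $F_\infty$ at every continuity point, so convergence in distribution follows from the Portmanteau-type characterization (pointwise convergence of distribution functions at continuity points of the limit).

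To finish, I would verify the remaining endpoints explicitly. The continuity points of $F_{Q_X(\omega_H)}$ are precisely the complement of $\{a_X(\omega_H),z_X(\omega_H)\}$, so the convergence $h^{(N)}(F_X(t))\to F_\infty(t)$ is only required off that at most two-point set; the only $t$ potentially needing special care is one where $F_X(t)=\omega_H$ lands on a fixed point of $h$, and here the convergence $h^{(N)}(\omega_H)=\omega_H$ is trivial. The lower case $\omega_H=1$ and the upper case $\omega_H=0$ are handled the same way, interpreting $Q_X(1)$ and $Q_X(0)$ as the essential supremum and essential infimum respectively, with the convention that a possibly infinite limit corresponds to $\H^{(N)}(X)$ escaping to $\pm\infty$ in distribution.

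The one delicate point, and the step I expect to need the most care, is the behaviour at $F_X(t)=\omega_H$: on a plateau of $F_X$ at height $\omega_H$ (necessarily the interval $[a_X(\omega_H),z_X(\omega_H))$), the identity $h^{(N)}(\omega_H)=\omega_H$ must be matched against the fact that $Q_X(\omega_H)$ charges both $a_X(\omega_H)$ and $z_X(\omega_H)$. The matching is bookkept correctly by the definitions of $a_X(\omega_H)$ and $z_X(\omega_H)$, but writing this out cleanly (and noting that $\{a_X(\omega_H),z_X(\omega_H)\}$ can have one or two elements, corresponding respectively to $F_X$ being strictly increasing through $\omega_H$ or having a flat at that height) is where the argument requires the most bookkeeping. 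Everything else is a direct application of the results of Sections~\ref{sec:fixed points of selectors} and~\ref{sec:iteration of modules}.
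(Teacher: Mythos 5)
Your proposal is correct and takes essentially the same route as the paper. The paper's proof consists of a single sentence deferring to the case analysis of $h^{(\infty)}$ in Section~\ref{sec:iteration of modules}; your write-up supplies the details that the paper leaves implicit, namely the reduction $F_{\H^{(N)}(X)}=h^{(N)}\circ F_X$, the identification of the pointwise limit $F_\infty$ with $F_{Q_X(\omega_H)}$ via the sets $\{F_X<\omega\}$, $\{F_X=\omega\}$, $\{F_X>\omega\}$ and the quantities $a_X(\omega_H)$, $z_X(\omega_H)$, and the observation that the only points of possible disagreement are discontinuity points of the limit law, so convergence in distribution follows.
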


This follows readily from the discussion in Section~\ref{sec:iteration of modules}.

Recall from Section \ref{sec:fixed points of selectors} that the Sperner point could be 1 (when $h(x)<x$, that is, if $\mathcal{S}$ contains a singleton and $\cap_{j=1}^k A_j=\emptyset$); or 0 (when $h(x)>$, that is, if $\mathcal{S}$ contains no singleton and $\cap_{j=1}^k A_j\neq\emptyset$). In the remaining cases, the Sperner point belongs to $(0,1)$.


The Zermelo max-min statistic $M\!m$ of equation \eqref{equation:maxminstatistic} applied repeatedly to any variable~$X$ converges to $Q_X(1-1/\varphi)$.
 If $X$ takes the values $a<b$ with respective probabilities $p\in(0,1)$ and $1-p$, then
 $$
 \mathbf{M\!m}^{(N)}(X) \stackrel{\rm d}{\longrightarrow} \begin{cases}
 a, & \quad \mbox{{if $1-1/\varphi<p$}}\,,\\
 X, & \quad \mbox{{if $1-1/\varphi=p$}}\, ,\\
 b, &\quad \mbox{{if $1-1/\varphi>p$}}\,.
 \end{cases}
 $$
 This is just the example discussed in Section \ref{randomizingzermelo} of this paper.

\smallskip
 For $X$ a standard normal random variable, $\mathbf{M\!m}^{(N)}(X)$ converges in distribution to the constant $\Phi^{-1}(1-1/\varphi)=\Phi^{-1}((3-\sqrt{5})/2)$, where $\Phi$ denotes the distribution function of a standard normal random variable.

\begin{remark}[Fixed points of $\H$]\label{remark:fixed points of H}
{\upshape Let $H$ be a selector. We say that a random variable $X$ is a fixed point of the operator $\H$ if $\H(X)\stackrel{\rm d}{=} X$. Observe that the constants are (trivial) fixed points of $\H$. Theorem \ref{main th Sperner} says that the only (non trivial) fixed points of $\H$ are the random variables $X$ taking two values $a<b$, with respective probabilities $\omega_H$ and $1-\omega_H$.}
\end{remark}

\begin{remark}[Limit theorem for conservative statistics]
\label{theorem:limittheorem} {\upshape Let $H$ be a conservative statistic whose module $h$ is not the identity, and let~$X$ be any random variable. An analogue of Theorem \ref{main th Sperner} in this case would read:
the sequence $\{\mathbf{H}^{(N)}(X)\}$  converges in distribution to a finite random variable which is  a mixture of the quantiles
$\{Q_X(\omega): \omega \in\mathcal{F}_H)\}$ of the repellent fixed points of $h$.}
\end{remark}

\subsection{Rate of convergence}

Let $h$ be the module of a selector $H$. Let $U$ be a uniform variable. Recall that  $h$ is the distribution function of $\mathbf{H}(U)$; and, in general,  $h^{(N)}$ is the distribution function of $\mathbf{H}^{(N)}(U)$.


Suppose that $h$ is not the identity. The sequence $\{h^{(N)}\}_{N}$ does not converge to $h^{\infty}$ in the sup-norm (Kolmogorov metric). The next lemma specifies the rate of convergence of  $\{h^{(N)}\}_{N}$ to $h^{\infty}$ in the $L^1$ norm (Wasserstein metric).

\begin{lemma}\label{lemma:rate_convergence} For any module $h$ as above,
\begin{equation}
\int_{0}^{1} \big|h^{(N)}(x)-h^{(\infty)}(x)\big| dx= O\Big(\frac{1}{N^{\eta}}\Big)
\end{equation}
for some $\eta >0$.
\end{lemma}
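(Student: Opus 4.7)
The plan is to analyze the iteration of $h$ near each of its fixed points, trading an exponential or polynomial escape rate at the repelling or neutral fixed point against super-exponential convergence toward the super-attracting one(s). The identity case is trivial (the integral is $0$); for the remaining three possibilities of Section~\ref{sec:iteration of modules} I would argue as follows.

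In the interior fixed-point case one has $h'(0)=h'(1)=0$ and $\lambda:=h'(\omega)>1$ (Theorem~\ref{conjecture:unique fiexd point} and Lemma~\ref{derivadas en 0 y 1}). I would pick $\lambda'\in(1,\lambda)$ and $\tilde\varepsilon>0$ small enough that $|h(t)-\omega|\ge\lambda'|t-\omega|$ on $[\omega-\tilde\varepsilon,\omega+\tilde\varepsilon]$, and iterate this linear lower bound. The set of $t$ whose orbit stays inside this interval for the first $N$ iterations is an interval about $\omega$ of length $O((\lambda')^{-N})$; outside it some iterate has already escaped to $[0,\omega-\tilde\varepsilon]\cup[\omega+\tilde\varepsilon,1]$, whence the super-attractivity of $0$ and $1$ (coming from $h(s)=O(s^2)$ near $0$ and $1-h(s)=O((1-s)^2)$ near $1$) drives $h^{(N)}(t)$ to $h^{(\infty)}(t)$ at a super-exponential rate. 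Bounding the integral by the measure of the ``stuck'' set plus a super-exponentially small bulk term yields an exponential bound, hence $O(1/N^\eta)$ for any $\eta>0$.

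Next I would treat the lower case $h(t)<t$ on $(0,1)$; the upper case follows by the symmetry $h\mapsto 1-h(1-\,\cdot\,)$. Here $h^{(\infty)}\equiv 0$ on $[0,1)$ so the quantity to control is $\int_0^1 h^{(N)}(t)\,dt$, and $m:=h'(1)$ is a positive integer (the number of singletons of the Sperner family, by Lemma~\ref{derivadas en 0 y 1}). If $m>1$ then $1$ is a repelling fixed point and the interior-case argument carries over. The delicate subcase is $m=1$: the relation $h'(1)=1$ together with $h(t)<t$ on $(0,1)$ forces an expansion
\[
1-h(1-u)=u+c\,u^q+O(u^{q+1}),\qquad c>0,\ q\ge 2,
\]
at $t=1$. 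Setting $u_k:=1-h^{(k)}(t)$, a comparison of the recursion $u_{k+1}=u_k+cu_k^q+O(u_k^{q+1})$ with the ODE $u'=cu^q$ (whose solution blows up at time $u_0^{-(q-1)}/(c(q-1))$) shows that the escape time from $[1-\tilde\varepsilon,1]$ is of order $u_0^{-(q-1)}$. Thus the set of $t$ whose iterates are still in that neighborhood after $N$ steps has Lebesgue measure $O(N^{-1/(q-1)})$; outside this parabolic boundary layer the iterates run super-exponentially to $0$ (using $h'(0)=0$), giving the bound with $\eta=1/(q-1)>0$.

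The hard part is this neutral subcase: parabolic dynamics at $1$ rule out any exponential rate, and the exponent $\eta$ has to be read off from the first nontrivial Taylor coefficient of $h$ at the boundary fixed point. In every other case the rate is in fact exponential in $N$, comfortably better than $O(1/N^\eta)$ for any fixed $\eta>0$, so for the given $h$ one just takes the smaller of the two resulting exponents.
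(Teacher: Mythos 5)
Your proposal is correct and follows essentially the same approach as the paper: both arguments split into the hyperbolic cases (giving exponential decay, hence dominated by any polynomial) and the parabolic case at a boundary fixed point, and in the parabolic case both reduce to the same local dynamics $u\mapsto u+cu^q+O(u^{q+1})$ near the neutral point. The only difference is bookkeeping for the parabolic escape time: the paper majorizes by $x\mapsto x(1-ax^k)$ and tracks the discrete recursion $\alpha_n=\alpha_{n-1}(1-a\alpha_{n-1}^k)$ via the auxiliary sequence $z_n=a\alpha_n^k$ (getting $\alpha_n=O(n^{-1/k})$), whereas you compare with the ODE $u'=cu^q$; both yield the same exponent $\eta=1/(q-1)=1/k$.
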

The distribution function of $\H^{(N)}$  applied to $n^N$ uniform independent variables is $0$ for $x <0$ and $1$ for $x>1$, and the same is true for the distribution function of the limit random variable. That is why the integral above (just on $[0,1]$) gives the Wasserstein distance.

If $h$ is ``lower'', Lemma \ref{lemma:rate_convergence} follows directly from the following lemma, while the ``upper'' case is analogous; for the case with one fixed point in $(0,1)$, it is enough to
split $[0,1]$ into two intervals and rescale these arguments.

%

\begin{lemma}
Let $g$ be a polynomial which  increases in $[0,1]$, satisfies $g(0)=0$ and $g(1)=1$, $g(x) <x$ for each $x \in (0,1)$ and $g'(0)=0$ and $g'(1)\ge 1$. Then:
\begin{enumerate}
\item[\rm i)]
If $g^{\prime}(1)>1$, then
$$
\int_0^1 g^{(n)}(x) \ dx < C \, \delta^n\, ,\quad\text{where $C >0$ and $\delta \in (0,1)$.}
$$

\item[\rm ii)]
In general,
\begin{equation}
\label{equation:potential_case}
\int_0^1 g^{(n)}(x) \ dx < C \, \frac{1}{n^{\eta}}\, ,\quad\text{where $C >0$ and $\eta >0$.}
\end{equation}
\end{enumerate}
\end{lemma}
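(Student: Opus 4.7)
The plan is to split the integral at a point $x_n\in[0,1)$ chosen close to $1$. Since $g^{(n)}$ is nondecreasing on $[0,1]$ with $g^{(n)}(1)=1$, monotonicity gives at once
\[
\int_0^1 g^{(n)}(x)\,dx \;\le\; g^{(n)}(x_n) + (1-x_n),
\]
and I shall choose $x_n$ so that both summands decay at the required rate.

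The analysis rests on two ingredients. First, a \emph{uniform contraction off a neighborhood of $1$}: since $g(x)<x$ on $(0,1)$, $g(0)=0$, and $g'(0)=0$, the ratio $g(x)/x$ extends continuously to $[0,1-\epsilon_0]$ (with value $0$ at $x=0$) and attains a maximum $r=r(\epsilon_0)<1$; hence $g(x)\le rx$ on $[0,1-\epsilon_0]$, and once an orbit enters this interval it is contracted toward $0$ by the factor $r$ per step. Second, an \emph{escape estimate from a neighborhood of $1$}: setting $u_k=1-g^{(k)}(x_n)$, which is strictly increasing because $g<\mathrm{id}$ on $(0,1)$, I need to count iterations until $u_k$ reaches the fixed level $\epsilon_0$, after which the contraction mechanism takes over.

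In case (i), Taylor expansion at $1$ yields $1-g(x)\ge\mu(1-x)$ on $[1-\epsilon_0,1]$ for some $\mu\in(1,g'(1)]$, so $u_{k+1}\ge\mu u_k$ while $u_k\le\epsilon_0$. Choosing $1-x_n=\epsilon_0\,\mu^{-n/2}$ forces escape in at most $n/2$ steps; the remaining $n/2$ iterations then live in $[0,1-\epsilon_0]$ and contract by $r^{n/2}$, so $\int_0^1 g^{(n)}\le r^{n/2}+\epsilon_0\mu^{-n/2}=O(\delta^n)$ with $\delta=\max(\sqrt{r},1/\sqrt{\mu})\in(0,1)$. In case (ii), since $g'(1)=1$ and $g<\mathrm{id}$, the lowest-order nontrivial term of $g(x)-x$ at $1$ is $-c(1-x)^m$ for some integer $m\ge 2$ and $c>0$, so with $u=1-x$ one has $1-g(x)=u+cu^m+O(u^{m+1})$ and hence $u_{k+1}\ge u_k+(c/2)u_k^m$ for $u_k$ small. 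Expanding $u_{k+1}^{-(m-1)}$ gives $u_{k+1}^{-(m-1)}\le u_k^{-(m-1)}-c'$ for some $c'>0$, whence $u_k$ reaches $\epsilon_0$ in $O(u_0^{-(m-1)})$ steps. Choosing $1-x_n=Cn^{-1/(m-1)}$ with $C$ large enough makes escape finish within $n/2$ steps; the subsequent $n/2$ iterations contribute a negligible factor $r^{n/2}$. Hence $\int_0^1 g^{(n)}\le r^{n/2}+Cn^{-1/(m-1)}=O(n^{-\eta})$ with $\eta=1/(m-1)>0$.

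The main obstacle is case (ii): the hypothesis $g'(1)=1$ rules out any geometric repulsion from $1$, so one must extract from the Taylor expansion the correct order $m\ge 2$ of contact of $g$ with the identity at $1$, justify the lower recursion $u_{k+1}-u_k\ge(c/2)u_k^m$ in a full neighborhood of $0$, and thereby obtain the polynomial escape rate. Once this is in place, the balancing of the two summands and the contraction step on $[0,1-\epsilon_0]$ are routine.
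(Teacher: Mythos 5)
Your proof is correct, and the underlying strategy coincides with the paper's: bound $\int_0^1 g^{(n)}$ by splitting at a point $x_n$ depending on $n$, estimate the time for the orbit to escape a neighborhood of $1$, and then use a uniform geometric contraction once the orbit is in $[0,1-\epsilon_0]$. The paper's case~i) formula $\int_0^1 f^{(2n)}\le \alpha_n\alpha_{-n}+(1-\alpha_{-n})$, with $\alpha_{-n}$ the backward orbit of $1/2$ and $\alpha_n=f^{(2n)}(\alpha_{-n})$, is exactly your split $\int_0^1 g^{(n)}\le g^{(n)}(x_n)+(1-x_n)$ in disguise.

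The genuine difference is in how the two pieces are estimated. The paper majorizes $g$ by an explicit piecewise-linear function in case i) and, after ``changing the roles of $0$ and $1$,'' by $x(1-ax^k)$ in case ii), and then computes the explicit orbit of $1/2$; your argument instead extracts the Taylor data of $g$ at $1$ (the constant $\mu<g'(1)$ in case~i), the tangency order $m\ge 2$ and leading coefficient $c>0$ in case~ii)) and of $g$ at $0$ (via $g'(0)=0$ to produce the contraction factor $r<1$ on $[0,1-\epsilon_0]$), working with $g$ itself throughout. Your discrete inequality $u_{k+1}^{-(m-1)}\le u_k^{-(m-1)}-c'$ plays the role of the paper's $1/z_n-1/z_{n-1}\ge 1$ (with the paper's $k$ equal to your $m-1$), and yields the same exponent $\eta=1/(m-1)$. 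Your version has the small advantage of sidestepping the paper's reflection step, whose exact set-up (the dominating relation $g(x)<x(1-ax^k)$ for the reflected function, which lies \emph{above} the identity) is stated rather loosely, and it ties the polynomial rate more transparently to the order of contact of $g$ with the identity at $1$. Two points worth spelling out in a final write-up: verify $(1+s)^{-(m-1)}\le 1-\tfrac{m-1}{2}s$ for $s$ in a fixed small range, which is what justifies the constant $c'$ (the naive inequality $(1+s)^{-(m-1)}\le 1-(m-1)s$ fails by convexity); and note that $u_0=Cn^{-1/(m-1)}<\epsilon_0$ holds only for $n$ large, the finitely many small $n$ being absorbed into the constant.
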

\begin{proof}
i) \textit{The case $g^{\prime}(1)>1$}. Let $\beta$  be a number smaller than $1/2$, but so close to $1/2$ that the segments from $(0,0)$ to $(1/2, \beta)$ and from $(1/2, \beta)$ to $(1,1)$ both lie within the region delimited by the graph of $g$ and the bisectrix of the first quadrant. Let $f$ be the function from $[0,1]$ onto $[0,1]$ whose graph is given by the two segments above.   See Figure \ref{fig:functionf}.
\begin{figure}[h]
\centering\resizebox{6.3cm}{!}{\includegraphics{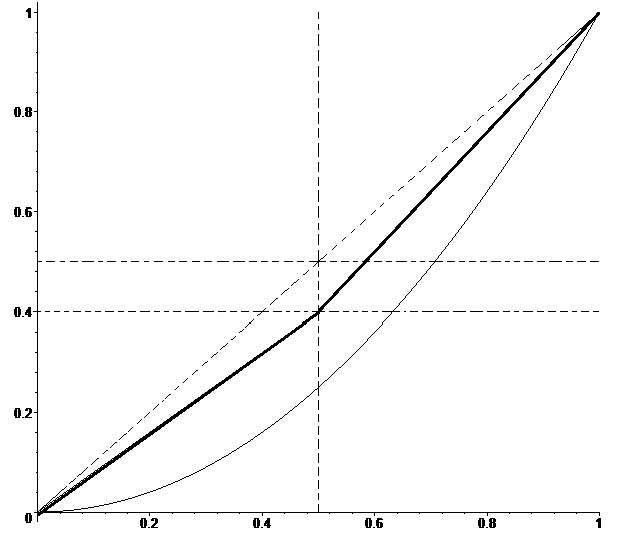}}
\caption{The function $f$.}\label{fig:functionf}
\end{figure}

Observe that $f$ is a bijection from $[0,1]$ onto $[0,1]$ and that for any $0 <x <1$ one has that
$$
g\big(g(x)\big) < g\big(f(x)\big) <f\big(f(x)\big)\, ,
$$
because $g$ is increasing and since for any $x\in (0,1)$, we have $g(x) <f(x)$. In general $g^{(n)}(x) < f^{(n)}(x)$, for any $x \in (0,1)$ and any integer $n \ge 1$.

\smallskip
Let $\alpha_0=1/2$. We define now a sequence indexed by $\mathbb{Z}$ by $\alpha_n=f^{(n)}(\alpha_0)$, for $n \ge 1$, and $\alpha_{-n}=f^{-n}(\alpha_0)$, for $n \ge 1$. It is easy to check that
$$
\alpha_n=\frac{1}{2}\, (2\beta)^n\quad\text{and}\quad 1-\alpha_{-n}=\frac{1}{2}\,\frac{1}{(2-2\beta)^n}.
$$

Now, for each $n \ge 1$ we have
$$
f^{(2n)}\big([0,\alpha_{-n}]\big)\subset [0,\alpha_n]\,,
$$
and then that
\begin{align*}
\int_0^1 f^{(2n)}(t)dt &=\int_0^{\alpha_{-n}} f^{(2n)}(t)dt+\int_{\alpha_{-n}}^1 f^{(2n)}(t)dt  \le \alpha_n \alpha_{-n}+(1-\alpha_{-n})
\\
&=
\frac{1}{2}\, (2\beta)^n\, \Big(1-\frac{1}{2}\,\frac{1}{(2-2\beta)^n}\Big) +\frac{1}{2}\,\frac{1}{(2-2\beta)^n}
\le \frac{1}{2}\, (2\beta)^n +\frac{1}{2}\,\frac{1}{(2-2\beta)^n}
<\delta^n
\end{align*}
taking $\delta=\max(2\beta, 1/(2-2\beta))$.

\smallskip
ii) \textit{The case $g^{\prime}(1)=1$}.  For simplicity, we will change the roles of the points 0 and 1. So assume that $g^{\prime}(0)=1$ and $g'(1)=0$. For some positive integer $k$, and for some $a$ positive and small enough, we have that
$$
g(x) < x(1-ax^k):=f(x), \quad \text{for} \ x \in (0, 1/2]\, .
$$
Following the argument in part i), to obtain \eqref{equation:potential_case}, we just have to analyze the rate of convergence to 0 of the decreasing sequence defined by $\alpha_0=1/2$, and
$$
\alpha_n= \alpha_{n-1} (1-a \alpha_{n-1}^k)\, , \ \text{for} \ n \ge 1\, .
$$
Now, since the sequence $z_n=a \alpha_n^k$ verifies $z_n < z_{n-1} (1-z_{n-1})$, for each $n \ge 1$, we have that
$$
\frac{1}{z_n}-\frac{1}{z_{n-1}} \ge 1
$$
and, consequently, that $z_n <{C}/{n}$ and that $\alpha_n <{C}/{n^{1/k}}$, for some constant $C >0$ and each~$n \ge 1$.
\end{proof}


\section{Comparison with (linear) limit theorems}\label{section:LGN}

For the sake of comparison we now recast  the Weak Law of Large Numbers  and the Central Limit Theorem in the framework of Theorem \ref{main th Sperner}. 
We do not strive for sharp hypothesis. See, for instance, Chapter 9 of \cite{Breiman} and also \cite{anshel}. 

\smallskip
Let $\vg=(\gamma_1, \ldots, \gamma_n)$ be a vector in $\mathbb{R}^n$ with (strictly) \textit{positive} coordinates.
Consider the linear function(al) $S_{\vg}: \mathbb{R}^n \mapsto \mathbb{R}$ given by $S_{\vg}(x_1, \ldots, x_n)=\sum_{j=1}^n \gamma_j x_j$; of course, this continuous function $S_{\vg}$ is not as selector.

We are interested in the asymptotic behavior of $\mathbf{S}_{\vg}^{(N)}(X)$ as $N \to \infty$.

\subsection{Weak law}

Here we assume that $ \|\vg\|_1=1$, so that $S_{\vg}$ is an average. Observe that $\|\vg\|_2 <1$.

\smallskip
Assume that $X$ has finite variance  $\sigma^2$ and expectation $\mu$. Observe that $\mathbf{S}_{\vg}(X)$ has  variance $\|\vg\|_2^2\,\sigma^2$ and expectation~$\mu$. In general, $
\mathbf{S}_{\vg}^{(N)}(X)
$ has variance $\|\vg\|_2^{2N} \sigma^2$ and expectation~$\mu$. We conclude, since $\|\vg\|_2 <1$,  that
$$
\mathbf{S}_{\vg}^{(N)}(X)\underset{N \to \infty}{\stackrel{\rm d}{\longrightarrow}}\mu\, .
$$
This is, of course, a rephrasing (of some form) of the  Weak Law of Large Numbers.

\smallskip
Observe that,  since the variance of $\mathbf{S}_{\vg}(X)$ is $\|\vg\|_2^2 \,\sigma^2$ and $\|\vg\|_2^2<1$, the only variables $X$  (with finite variance) such that $\mathbf{S}_{\vg}(X)\stackrel{\rm d}{=}X$ are the constants. Compare with Remark \ref{remark:fixed points of H}.

More generally, let $\vg^{(k)}$, $ k\ge 1$, be a sequence of vectors in $\mathbb{R}^n$ with positive coordinates and such that $\|\vg^{(k)}\|_1=1$ for $k\ge 1$. For such a sequence we have  that, if $\sum_{k=1}^\infty (1-\|\vg^{(k)}\|_2) =+\infty$, then for any random variable $X$ with finite variance,
$$
\big(\mathbf{S}_{\vg^{(N)}}\circ \mathbf{S}_{\vg^{(N-1)}}\circ \cdot \circ\mathbf{S}_{\vg^{(1)}}\big)(X) \underset{N \to \infty}{\stackrel{\rm d}{\longrightarrow}}\E(X)\, .
$$
Observe that if $\sum_{k=1}^\infty (1-\|\vg^{(k)}\|_2)<+\infty$, the limit of $\big(\mathbf{S}_{\vg^{(N)}}\circ \mathbf{S}_{\vg^{(N-1)}}\circ \cdot \circ\mathbf{S}_{\vg^{(1)}}\big)(X)$, if it exists, will not be a constant (unless $X$ itself is a constant).

\subsection{Central limit} Now we assume that $\|\vg\|_2=1$. Observe that $\|\vg\|_3<1$.

Let $X$ be a random variable with $\E(X)=0$, $\E(X^2)=1$ and $\E(|X|^3)=\rho <+\infty$.
The Berry--Esseen inequality gives that
$$
|F_{\mathbf{S}_{\vg}(X)}(x)-\Phi(x)| \le \rho \|\vg\|^3_3\, , \quad \mbox{for any $x \in \mathbb{R}$}\, .
$$
Since $\mathbf{S}_{\vg}^{(N)}$ is also a $\mathbf{S}$ operator but with the vector $\{\big(\gamma_{i_1},\dots, \gamma_{i_N}\big); 1 \le i_j \le n, 1 \le j \le N\}$ instead of the original $\{\gamma_i; 1 \le i \le n\}$, it follows that  for any $N \ge 1$,
$$
|F_{\mathbf{S}_{\vg}^{(N)}(X)}(x)-\Phi(x)| \le \rho \|\vg\|^{3N}_3\, , \quad \mbox{for any $x \in \mathbb{R}$}\, .
$$

Since $\|\vg\|_3 <1$, we conclude that
$$
\mathbf{S}_{\vg}^{(N)}(X) \underset{N \to \infty}{\stackrel{\rm d}{\longrightarrow}} \mbox{standard normal}\, ;
$$
again, a rephrasing of (some form) of the Central Limit Theorem.

Observe that as a consequence of this limit theorem it follows that if $X$ has $\E(X)=0$, $\E(X^2)=1$ (and $\E(|X|^3)<+ \infty$) and if $X$ is a fixed point of $\mathbf{S}_{\vg}$, in the sense that $\mathbf{S}_{\vg}(X)\stackrel{\rm d}{=} X$,  then $X$ is a standard normal variable. Compare with Remark \ref{remark:fixed points of H}.

More generally, let $\vg^{(k)}$, $k\ge 1$, be a sequence of vectors in $\mathbb{R}^n$ with positive coordinates and such that $\|\vg^{(k)}\|_2=1$ for $k\ge 1$. For such a sequence we have  that if $\sum_{k=1}^\infty (1-\|\vg^{(k)}\|_3) =+\infty$ then for any random variable $X$ with $\E(X)=0$, $\E(X^2)=1$ and $\E(|X|^3)=\rho <+\infty$ the following convergence holds
$$
\big(\mathbf{S}_{\vg^{(N)}}\circ \mathbf{S}_{\vg^{(N-1)}}\circ \cdot \circ\mathbf{S}_{\vg^{(1)}}\big)(X) \underset{N \to \infty}{\stackrel{\rm d}{\longrightarrow}}\mbox{standard normal}\, .
$$

\

\noindent\textsc{Francisco Durango:} Departamento de Matem\'{a}ticas, Universidad Aut\'{o}noma de Madrid, 28049-Madrid, Spain.
\texttt{fra.durango@estudiante.uam.es}

\medskip

\noindent\textsc{Jos\'{e} L. Fern\'{a}ndez:} Departamento de Matem\'{a}ticas, Universidad Aut\'{o}noma de Madrid, 28049-Madrid, Spain.
\texttt{joseluis.fernandez@uam.es}

\medskip

\noindent\textsc{Pablo Fern\'{a}ndez:} Departamento de Matem\'{a}ticas, Universidad Aut\'{o}noma de Madrid, 28049-Madrid, Spain.
\texttt{pablo.fernandez@uam.es}

\medskip

\noindent\textsc{Mar\'{\i}a J. Gonz\'{a}lez:} Departamento de Matem\'{a}ticas, Universidad de C\'{a}diz, 11510-Puerto Real, C\'{a}diz, Spain.
\texttt{majose.gonzalez@uca.es}
\end{document}